\titleformat{\section}[hang]								%
{\bfseries\large}{\thesection.}{0.5em}{}[]					%
\titlespacing*{\section}{0em}{2em}{1.5em}					%
\titleformat{\subsection}[runin]							%
{\bfseries\normalsize}{\thesubsection.}{0em}{\ }[.]			%
\theoremstyle{plain}									%
\newtheorem{thm}{Theorem}[section]					%
\newtheorem{prop}[thm]{Proposition}						%
\newtheorem{lem}[thm]{Lemma}						%
\newtheorem{cor}[thm]{Corollary}						%
\theoremstyle{definition}								%
\theoremstyle{remark}								%
\newtheorem{rem}[thm]{Remark}						%
\numberwithin{equation}{section}						%
\DeclareMathOperator{\hess}{hess}		
\newcommand{\Romannum}[1]{\uppercase\expandafter{\romannumeral#1\relax}}
\newcommand{\romannum}[1]{\romannumeral#1\relax}
\def \bC {\mathbb C}		
\def \bE {\mathbb E}		
\def \bN {\mathbb N_+}		
\def \bP {\mathbb P}		
\def \bR {\mathbb R}		
\def \bT {\mathbb T}		
\def \bZ {\mathbb Z}		
\def \bfp {\mathbf p}		%
\def \bfr {\mathbf r}		%
\def \bfx {\mathbf x}		
\def \bfy {\mathbf y}		
\def \bst {\boldsymbol\tau}			%
\def \cA {\mathcal A}		
\def \cF {\mathcal F}		%
\def \cH {\mathcal H}		%
\def \cL {\mathcal L}		
\def \cS {\mathcal S}		
\def \cY {\mathcal Y}		%
\def \fb {\mathfrak b}		%
\def \fp {\mathfrak p}		%
\def \fr {\mathfrak r}		%
\def \fu {\mathfrak u}		%
\def \fv {\mathfrak v}		%
\def \sA {\mathscr A}		%
\def \sF {\mathscr F}		
\def \sR {\mathscr R}		%
\def \sS {\mathscr S}		%
\def \sW {\mathscr W}		%
\def \sX {\mathscr X}		%
\def \ve {\vec\eta}		%
\def \vn {\vec\nu}			%
\def \vt {\vec\tau}		%
\begin{document}

\pagestyle{plain}
\pagenumbering{arabic}
\bibliographystyle{plain}

\title{Hyperbolic scaling limit of non-equilibrium fluctuations \\for a weakly anharmonic chain}
\author{Lu \textsc{Xu}}
\date{}
\maketitle


\begin{abstract}
We consider a chain of $n$ coupled oscillators placed on a one-dimensional lattice with periodic boundary conditions. 
The interaction between particles is determined by a weakly anharmonic potential $V_n = r^2/2 + \sigma_nU(r)$, where $U$ has bounded second derivative and $\sigma_n$ vanishes as $n \to \infty$. 
The dynamics is perturbed by noises acting only on the positions, such that the total momentum and length are the only conserved quantities. 
With relative entropy technique, we prove for dynamics out of equilibrium that, if $\sigma_n$ decays sufficiently fast, the fluctuation field of the conserved quantities converges in law to a linear $p$-system in the hyperbolic space-time scaling limit. 
The transition speed is spatially homogeneous due to the vanishing anharmonicity. 
We also present a quantitative bound for the speed of convergence to the corresponding hydrodynamic limit. 
\end{abstract}

\medskip
\noindent\textbf{Keywords}: non-equilibrium fluctuation, hyperbolic scaling limit, Boltzmann--Gibbs principle, relative entropy

\medskip
\noindent\textbf{Mathematics Subject Classification 2010}: 60K35, 82C05, 82C22

\section{Introduction}
\label{sec:introduction}

One of the central topics in statistical physics is to derive macroscopic equations in scaling limits of microscopic dynamics. 
For Hamiltonian lattice field, Euler equations can be formally obtained in the limit, under a generic assumption of local equilibrium. 
However, to prove this for deterministic dynamics is known as a difficult task. 
In particular when nonlinear interaction exists, the appearance of shock waves in the Euler equations complicates further the problem. 
In that case, the convergence to the entropy solution is expected. 

The situation is better understood when the microscopic dynamics is perturbed stochastically. 
Proper noises can provide the dynamics with enough ergodicity, in the sense that the only conserved quantities are those evolving with the macroscopic equations \cite{FFL94}. 
The deduction of partial differential equations from the limit of properly rescaled conserved quantities in these dynamics is called \emph{hydrodynamic limit}. 
For Hamiltonian dynamics with noises conserving volume, momentum and energy, Euler equations are obtained under the hyperbolic space-time scale \cite{OVY93,EO14}. 
They are proved by relative entropy technique and restricted to the smooth regime of Euler equations. 

As hydrodynamic limit can be viewed as the law of large numbers in functional spaces, we can go one step further towards the corresponding central limit theorem. 
More precisely, we can investigate the macroscopic time evolution of the fluctuations of the conserved quantities around its hydrodynamic centre. 
If the dynamics is in its equilibrium, these fluctuations are Gaussian and evolve following linearized equations, known as \emph{equilibrium fluctuation}. 
To prove it requires to approximate the space-time variance of the currents associated to the conserved quantities by their linear functions. 
This step is usually called the \emph{Boltzmann--Gibbs principle} \cite{BR84,KL99}. 
For gradient, reversible systems, a general proof of the Boltzmann--Gibbs principle is established in \cite{Chang94} using entropy method. 
In other cases, such as anharmonic Hamiltonian dynamics, the proof usually relies on model-dependent arguments, such as the spectral gap \cite{OS13,OX20}. 

Our main interest is \emph{non-equilibrium fluctuation}, namely the central limit theorem associated to the corresponding hydrodynamic limit for dynamics out of equilibrium. 
Compared to the equilibrium case, the non-equilibrium fluctuation field exhibits long-range space-time correlations, which turns out to be the main difficulty. 
For some dynamics such as symmetric exclusion process (SSEP) and reaction-diffusion model, duality method can be used to control the correlations and obtain the non-equilibrium version of the Boltzmann--Gibbs principle \cite{DFL86,FPV88,BDP92,Ravish92a}. 
For one-dimensional weakly asymmetric exclusion process (WASEP), a microscopic Cole--Hopf transformation \cite{Gartner88} can be applied, instead of the Boltzmann--Gibbs principle, to linearize the currents \cite{DG91,Ravish92b,BG97}. 
While most works deal with the diffusive space-time scale, the totally asymmetric exclusion process (TASEP) is the only model in which non-equilibrium fluctuation is proved under the hyperbolic scale \cite{Rezakh02}. 
Note that all these works are restricted to models with \emph{stochastic integrability} and \emph{single conservation law}. 

In the absence of stochastic integrability, non-equilibrium fluctuations are understood for only few models. 
In \cite{CY92}, an Ornstein--Uhlenbeck process is obtained from non-equilibrium fluctuations for one-dimensional Ginzburg--Landau model using logarithmic Sobolev inequality. 
A general derivation of non-equilibrium fluctuations for conservative systems has been largely open for a long period of time since then. 
Recently in \cite{JM18a,JM18b}, a new approach is developed and applied to spatially inhomogeneous WASEP in dimensions $d < 4$. 
Their main tool is relative entropy technique. 
Briefly speaking, Yau's relative entropy inequality \cite{Yau91} says that the derivative of the relative entropy with respect to a given local Gibbs measure is bounded by a dissipative term and an entropy production term. 
In \cite{JM18a,JM18b}, the authors obtain an estimate allowing them to control the entropy production term by the dissipative term, which they called the key lemma. 
An entropy estimate then follows directly from this lemma. 
Using both the lemma and the entropy estimate as input, Boltzmann--Gibbs principle can be proved by a generalized Feyman--Kac inequality \cite[Lemma 3.5]{JM18a}. 

In the present article we study non-equilibrium fluctuations for a Hamiltonian lattice field under the hyperbolic scale. 
Observe that part of the ideas in \cite{JM18a,JM18b} is robust enough to be applied to our model, cf. Section \ref{sec:boltzmann gibbs}. 
Meanwhile, the proof of the key lemma relies heavily on the particular basis of the local functions on the configuration space of WASEP. 
In Section \ref{sec:lemma} we establish a similar estimate for Hamiltonian dynamics. 
The main tools we used are the Poisson equation and the equivalence of ensembles, see Section \ref{sec:ee} and \ref{sec:poisson} for details. 

The microscopic model we study is a noisy Hamiltonian system on one-dimensional lattice space with \emph{vanishing anharmonicity} and \emph{two conservation laws}. 
Precisely speaking, consider a chain of $n$ coupled oscillators, each of them has mass $1$. 
For $i = 0$, $1$, ..., $n$, denote by $(p_i, q_i) \in \bR^2$ the momentum and position of the particle $i$. 
The periodic boundary condition $(p_0, q_0) = (p_n, q_n)$ is applied to the chain. 

\vspace{3pt}
\begin{figure}[htb]
\centering
\includegraphics[width=0.83\textwidth]{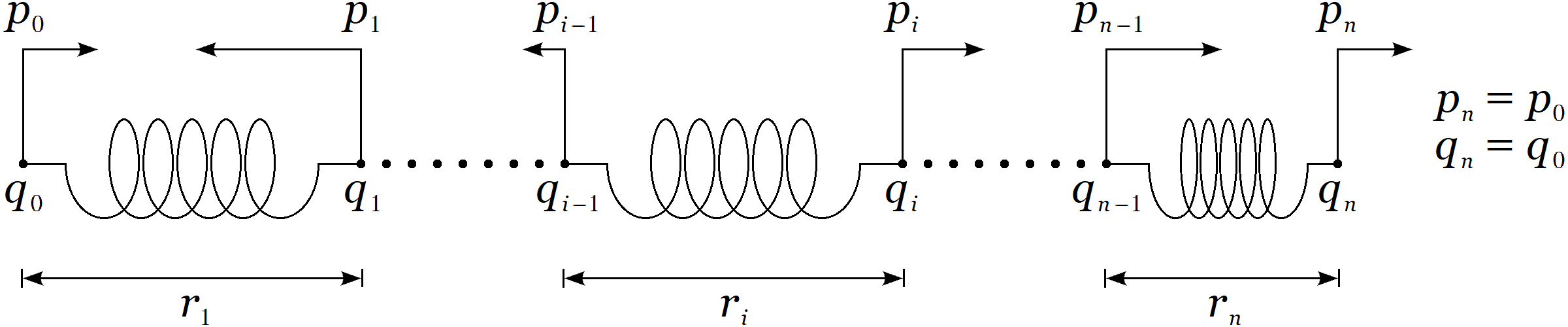}
\caption{Chain of oscillators with periodic boundary}
\end{figure}

\noindent Each pair of consecutive particles $i - 1$ and $i$ is connected by a spring with potential defined by $V(q_i - q_{i-1})$, where $V$ is a nice function on $\bR$. 
With $r_i = q_i - q_{i-1}$ being the relative position, the energy of the chain is given by the Hamiltonian 
$$
  H_n(\bfp, \bfr) = \sum_{i=1}^n \frac{p_i^2}2 + V(r_i). 
$$
When $V$ is quadratic, the corresponding Hamiltonian dynamics is harmonic, and the macroscopic behaviour is known to be purely ballistic. 
We add an anharmonic perturbation to the quadratic potential and define 
$$
  V_\sigma(r) = \frac{r^2}2 + \sigma U(r), \quad \forall r \in \bR, 
$$
where $U$ is a smooth function with good properties, and $\sigma > 0$ is a small parameter which regulates the nonlinearity. 
When $\sigma > 0$ is fixed we say the potential is \emph{anharmonic}, whereas $\sigma \to 0$ is the \emph{weakly anharmonic} case. 

The deterministic Hamiltonian dynamics is perturbed by random, continuous exchange of volume stretch $(r_i, r_{i+1})$ for each $i$, such that $r_i + r_{i+1}$ is conserved. 
The corresponding micro canonical surface is a line, where we add a Wiener process. 
This stochastic perturbation is generated by a symmetric second order differential operator $\cS_{n,\sigma}$ defined later in \eqref{generator}. 
The noise does not conserve $V(r_i) + V(r_{i+1})$, thus breaks the conservation law of energy. 
Notice that the total momentum is naturally another conserved quantity, which is untouched by the noise. 
Similar noise that destroys the energy conservation is also adopted in \cite{Fritz11}. 
Note that the noise in \cite{Fritz11} includes also the exchange of momentum between the nearest neighbour particles. 
In our case the noise on momentum can be dropped, thanks to the linear construction of the momentum fluctuation in the microscopic level. 
We choose the noise in such way that the momentum and volume are the only conserved quantities, hence the equilibrium states are given by canonical Gibbs measures at a fixed temperature $\beta^{-1} > 0$. 

For the anharmonic case, the hydrodynamic equation is
$$
  \partial_t\fp(t, x) = \partial_x\bst_\sigma(\fr(t, x)), \quad \partial_t\fr(t, x) = \partial_x\fp(t, x), 
$$
where $\bst_\sigma$ is the equilibrium tension defined later in \eqref{convex conjugate}. 
It is proved in \cite{OVY93} in smooth regime. 
Denote by $(\fp_\sigma, \fr_\sigma)$ the solution of the equation above. 
Consider the fluctuation field of the conserved quantities along the hydrodynamic equation, given by 
$$
  \frac1{\sqrt n} \sum_{i=1}^n \begin{pmatrix}p_i(t) - \fp_\sigma(t, i/n) \\ r_i(t) - \fr_\sigma(t, i/n)\end{pmatrix}\delta\left(x - \frac in\right). 
$$
Formally, it is expected to converge to a solution of the linearized system 
$$
  \partial_t\tilde\fp_\sigma(t, x) = \bst'_\sigma(\fr_\sigma)\partial_x\tilde\fr_\sigma(t, x), \quad \partial_t\tilde\fr_\sigma(t, x) = \partial_x\tilde\fp_\sigma(t, x). 
$$
Particularly for the equilibrium system, $(\fp_\sigma, \fr_\sigma)$ degenerates to constants and the fluctuation equation is proved in \cite{OX20}, even with the energy conservation and boundary conditions. 
Non-equilibrium fluctuations for anharmonic dynamics remain an open problem. 

We work with the weakly anharmonic case that $\sigma = \sigma_n$ depends on the scaling parameter $n$ in such way that $\sigma_n = o(1)$. 
Similar model with vanishing anharmonicity is also considered in \cite{BGJS18}, where the authors take the FPU-type perturbation $U = r^4$ and the flip-type noise conserving the total energy as well as the sum of the total volume and momentum. 
Although the main interest of \cite{BGJS18} lies in the anomalous diffusion of energy fluctuation, they also prove that under the hyperbolic scale, the time evolution of the fluctuation field of the equilibrium dynamics is governed by a $p$-system. 
Our main result, Theorem \ref{thm:nef}, shows that non-equilibrium fluctuations evolve following a linear $p$-system with spatially homogeneous sound speed, provided that $U$ has bounded second order derivative and $\sigma_n$ decays fast enough. 
This is the first rigorous result obtained for non-equilibrium fluctuations for a Hamiltonian dynamics presenting some level of nonlinearity. 
We also prove a quantitative version of the corresponding hydrodynamic limit in Corollary \ref{cor:quantitative hl}. 

We believe that the macroscopic fluctuation equation proved in this work should be valid with noises acting only on momentum, but the answer is unclear even when the dynamics is in equilibrium. 
Another interesting problem concerns the presentation of boundary conditions in the fluctuation. 
Boundary driven non-equilibrium fluctuations are studied for one-dimensional SSEP in \cite{LMO08,FGN19} and for WASEP in \cite{GLM17}. 
However for Hamiltonian dynamics, it is only studied for equilibrium dynamics \cite{OX20}. 

The article is organized as follows. 
In Section \ref{sec:model and results} we present the precise definition of the microscopic dynamics and state our main results. 
In Section \ref{sec:lemma} we prove the technical lemma, relying on the equivalence of ensembles under inhomogeneous canonical measures and a gradient estimate for the solution of the Poisson equation. 
In Section \ref{sec:relative entropy} we prove the relative entropy estimate Theorem \ref{thm:relative entropy}, based on the technical lemma. 
We also prove the quantitative hydrodynamics limit Corollary \ref{cor:quantitative hl} as an application of Theorem \ref{thm:relative entropy}. 
In Section \ref{sec:boltzmann gibbs} we prove the Boltzmann--Gibbs principle out of equilibrium, along the approach introduced in \cite{JM18a,JM18b}. 
In Section \ref{sec:fdd} and \ref{sec:tightness} we prove the two aspects of the weak convergence of non-equilibrium fluctuations in Theorem \ref{thm:nef}, namely the finite-dimensional convergence and the tightness. 
In Section \ref{sec:ee} and \ref{sec:poisson} we establish the equivalence of ensembles and the gradient estimate for the Poisson equation, respectively. 
Both of them play an important role in the proof of the technical lemma. 
Finally, some auxiliary estimates are collected in the appendix. 

We close this section with some notations used through the article. 
Let $\bT \sim [0, 1)$ be the one-dimensional torus. 
For a bounded function $f: \bT \to \bR^d$, define 
$$
  |f|_\bT = \sup_\bT |f(x)|_{\bR^d}, \quad \|f\|^2 = \int_\bT |f(x)|_{\bR^d}^2dx. 
$$
Let $\{\varphi_m, m \in \bZ\}$ be the Fourier basis on $\bT$ given by $\varphi_m(x) = e^{2mx\pi i}$. 
For a smooth function $f \in C^\infty(\bT; \bR^2)$ and $k \in \bR$, define 
$$
  \|f\|_k^2 = \sum_{m\in\bZ} \frac{\big|\hat f(m)\big|_{\bC^2}^2}{(1 + m^2)^k}, \quad \hat f(m) = \int_\bT f(x)\overline{\varphi_m(x)}dx. 
$$
Define the Sobolev space $\cH_k(\bT)$ as the closure of $C^\infty(\bT; \bR^2)$ with respect to the norm $\|\cdot\|_k$. 
By a standard dual argument, we can identify $\cH_{-k}(\bT)$ with the space of linear functionals on $C^\infty(\bT; \bR^2)$ which is continuous with respect to $\|\cdot\|_k$. 
For $T > 0$, $C([0, T]; \cH_{-k})$ denotes the set of all continuous trajectories on $[0, T]$ taking values in $\cH_{-k}$, equipped with the uniform topology. 
Also let $C^\alpha([0, T]; \cH_{-k})$ be the subset of $C([0, T]; \cH_{-k})$, consisting of H\"older continuous trajectories with order $\alpha > 0$. 

\section{Microscopic model and main results}
\label{sec:model and results}

For $n \in \bN$, denote by $\bT_n = \bZ/n\bZ$ the one-dimensional discrete $n$-torus, and let $\Omega_n = (\bR^2)^{\bT_n}$ be the configuration space. 
Elements in $\Omega_n$ are denoted by $\ve = \{\eta_i; i \in \bT_n\}$, where $\eta_i = (p_i, r_i) \in \bR^2$. 
Let $U$ be a smooth function on $\bR$ with bounded second order derivative. 
To simplify the arguments, we assume that 
$$
  U(0) = U'(0) = 0, \quad U''(r) \in [-1, 1], \quad \forall r \in \bR. 
$$
For $\sigma \in [0, 1)$, which is supposed to be small eventually, define 
$$
  V_\sigma(r) = \frac{r^2}2 + \sigma U(r), \quad \forall r \in \bR. 
$$
Note that $V_\sigma$ is a smooth function with quadratic growth: 
$$
  \inf V_\sigma'' \ge 1 - \sigma > 0,\quad \sup V''_\sigma \le 1 + \sigma < \infty. 
$$
Define the Hamiltonian $H_{n,\sigma} = \sum_{i\in\bT_n} p_i^2/2 + V_\sigma(r_i)$. 
The corresponding Hamiltonian system is generated by the following Liouville operator 
\begin{align*}
  \cA_{n,\sigma} &= \sum_{i\in\bT_n} (p_i - p_{i-1})\frac\partial{\partial r_i} + \big(V_\sigma'(r_{i+1}) - V_\sigma'(r_i)\big)\frac\partial{\partial p_i} \\
  &= \sum_{i\in\bT_n} (p_i - p_{i-1})\frac\partial{\partial r_i} + (r_{i+1} - r_i)\frac\partial{\partial p_i} + \sigma(U'(r_{i+1}) - U'(r_i)\big)\frac\partial{\partial p_i}
\end{align*}
At each bond $(i, i + 1)$, the deterministic system is contact with a thermal bath at fixed temperature. 
More precisely, fix some $\beta > 0$ and define 
$$
  \cY_i = \frac\partial{\partial r_{i+1}} - \frac\partial{\partial r_i}, \quad \cY_{i,\sigma}^* = \beta\big(V_\sigma'(r_{i+1}) - V_\sigma'(r_i)\big) - \cY_i. 
$$
Notice that $\beta$ is fixed through this article, thus we omit the dependence on it in most cases. 
For $\gamma > 0$, consider the operator $\cL_{n,\sigma,\gamma}$, given by 
\begin{equation}
\label{generator}
  \cL_{n,\sigma,\gamma} = n\big(\cA_{n,\sigma} + \gamma\cS_{n,\sigma}\big), \quad \cS_{n,\sigma} = -\frac12\sum_{i\in\bT_n} \cY_{i,\sigma}^*\cY_i, \quad 
\end{equation}
where $\gamma$ regulates the strength of the noise. 
With an infinite system of independent, standard Brownian motions $\{B^i; i \ge 1\}$, the Markov process generated by $\cL_{n,\sigma,\gamma}$ can be expressed by the solution of the following system of stochastic differential equations: 
$$
  \left\{
  \begin{aligned}
    dp_i(t) =\ &n\big(V'_\sigma(r_{i+1}) - V'_\sigma(r_i)\big)dt, \\
    dr_i(t) =\ &n(p_{i+1} - p_i)dt + \frac{n\beta\gamma}2\big(V'_\sigma(r_{i+1}) + V'_\sigma(r_{i-1}) - 2V'_\sigma(r_i)\big)dt \\
    &+ \sqrt{n\gamma}\big(dB_t^{i-1} - dB_t^i\big), \qquad \forall i \in \bT_n. 
  \end{aligned}
  \right.
$$
It can be treated as the dynamics of the chain of oscillators illustrated in Section \ref{sec:introduction}, rescaled hyperbolically and perturbed with the noise conserving the total momentum $\sum p_i$ as well as the total length $\sum r_i$. 
The total energy $H_{n,\sigma}$ is no longer conserved. 

For $\tau \in \bR$ and $0 \le \sigma < 1$, define the probability measure $\pi_{\tau,\sigma}$ by 
\begin{equation}
\label{one site distribution}
  \pi_{\tau,\sigma}(dr) = \frac1{Z_\sigma(\tau)}e^{-\beta(V_\sigma(r) - \tau r)}dr, 
\end{equation}
where $Z_\sigma(\tau)$ is the normalization constant given by 
$$
  Z_\sigma(\tau) = \int_\bR e^{-\beta(V_\sigma(r) - \tau r)}dr = \int_\bR \exp\left\{-\frac{\beta r^2}2 - \beta\sigma U(r) + \beta\tau r\right\}dr. 
$$
The Gibbs potential $G_\sigma$ and the free energy $F_\sigma$ are then given for each $\tau \in \bR$, $r \in \bR$ by the following Legendre transform 
\begin{equation}
\label{gibbs potential and free energy}
  G_\sigma(\tau) = \frac1\beta\log Z_\sigma(\tau), \quad F_\sigma(r) \triangleq \sup_{\tau\in\bR} \big\{\tau r - G_\sigma(\tau)\big\}. 
\end{equation}
Denote by $\bar r_\sigma$ and $\bst_\sigma$ the corresponding convex conjugate variables 
\begin{equation}
\label{convex conjugate}
  \bar r_\sigma(\tau) = E_{\pi_{\tau,\sigma}} [r] = G'_\sigma(\tau), \quad \bst_\sigma(r) = F'_\sigma(r). 
\end{equation}
Observe that given any finite interval $[r_-, r_+] \in \bR$, 
\begin{equation}
\label{asymptotic tension}
  \big|\bst_\sigma(r) - r\big| \le C\sigma, \quad \big|\bst'_\sigma(r) - 1\big| \le C\sigma, \quad \big|\bst''_\sigma(r)\big| \le C\sigma 
\end{equation}
holds with a uniform constant $C$ for all $r \in [r_-, r_+]$ and sufficiently small $\sigma \ge 0$. 
The details of these asymptotic properties are discussed in Appendix \ref{appendix:tension}. 

For $n \ge 1$, the (grand) Gibbs states of the generator $\cL_{n,\sigma,\gamma}$ are given by the family of product measures $\{\nu_{\bar p,\tau,\sigma}^n; (\bar p, \tau) \in \bR^2\}$ on $\Omega_n$, defined as 
\begin{equation}
\label{gibbs states}
  \nu_{\bar p,\tau,\sigma}^n(d\ve) = \prod_{i\in\bT_n} \sqrt{\frac\beta{2\pi}}\exp\left\{-\frac{\beta(p_i - \bar p)^2}2\right\}dp_i \otimes \pi_{\tau,\sigma}(dr_i). 
\end{equation}
It is easy to see that $\cA_{n,\sigma}$ is anti-symmetric, while $\cS_{n,\sigma}$ is symmetric with respect to the Gibbs states, and for all smooth functions $f$, $g$ on $\Omega_n$, 
$$
  \int_{\Omega_n} f\big(\cS_{n,\sigma}g\big)\;d\nu_{\bar p,\tau,\sigma}^n = -\frac12\int_{\Omega_n} \sum_{i\in\bT_n} \cY_if\cY_ig\;d\nu_{\bar p,\tau,\sigma}^n. 
$$
In particular, $\nu_{\bar p,\tau,\sigma}^n$ is invariant with respect to $\cL_{n,\sigma,\gamma}$. 

\subsection{Weakly anharmonic oscillators}
Pick two positive sequences $\{\sigma_n\}$, $\{\gamma_n\}$ and consider the Markov process in $\Omega_n$ associated to the infinitesimal generator 
\begin{equation}
  \cL_n = \cL_{n,\sigma_n,\gamma_n}, \quad \forall n \ge 1. 
\end{equation}
Basically, we demand that $\sigma_n \to 0$, $\gamma_n \ge 1$ and $\gamma_n = o(n)$. 
These conditions correspond to a weakly anharmonic interaction and assure that the noise would not appear in the hyperbolic scaling limit. 
From here on, we denote 
\begin{equation}
\label{simple notation}
  V_n = V_{\sigma_n}, \quad \cS_n = \cS_{n,\sigma_n}, \quad \bar r_n = \bar r_{\sigma_n}, \quad \bst_n = \bst_{\sigma_n} 
\end{equation}
for short. 
For any fixed $T > 0$, denote by 
$$
  \big\{\ve(t) = (\eta_i(t); i \in \bT_n) \in \Omega_n; t \in [0, T]\big\} 
$$
the Markov process generated by $\cL_n$ and initial distribution $\nu_n$ on $\Omega_n$. 
This is the main subject treated in this article. 
Denote by $\bP_n$, $\bE_n$ the corresponding distribution and expectation on the trajectory space $C([0, T]; \Omega_n)$ of $\ve(\cdot)$, respectively. 

\subsection{Hydrodynamic limit}

We start from the anharmonic case $\sigma_n \equiv \sigma \in (0, 1)$. 
Let $\bP_{n,\sigma}$ denote the law of the Markov process generated by $\cL_{n,\sigma,1}$ and $\nu_n$. 
Assume some profile $\fv \in C^2(\bT; \bR^2)$, such that for any smooth function $g$ on $\bT$, 
\begin{equation}
\label{hydrodynamic initial condition}
  \lim_{n\to\infty} \nu_n \left\{\left|\frac1n\sum_{i\in\bT_n}  g\left(\frac in\right)\eta_i(0) - \int_\bT g(x)\fv(x)dx\right| > \epsilon\right\} = 0, \quad \forall \epsilon > 0. 
\end{equation}
The hydrodynamic limit is then given by the following convergence 
\begin{equation}
\label{hydrodynamic limit}
  \lim_{n\to\infty} \bP_{n,\sigma} \left\{\left|\frac1n\sum_{i\in\bT_n} g\left(\frac in\right)\eta_i(t) - \int_\bT g(x)\begin{pmatrix}\fp_\sigma \\ \fr_\sigma\end{pmatrix}(t, x)dx\right| > \epsilon\right\} = 0, 
\end{equation}
for all $\epsilon > 0$. 
Here $(\fp_\sigma, \fr_\sigma)$ solves the \emph{quasi-linear $p$-system}: 
\begin{equation}
\label{quasi linear p}
  \partial_t\fp_\sigma = \partial_x\bst_\sigma(\fr_\sigma), \quad \partial_t\fr_\sigma = \partial_x\fp_\sigma, \quad (\fp_\sigma, \fr_\sigma)(0, \cdot) = \fv, 
\end{equation}
where $\bst_\sigma = \bst_\sigma(r)$ is the \emph{equilibrium tension} given in \eqref{convex conjugate}. 
Note that the Lagrangian material coordinate is considered as the space variable. 
It is well known that even with smooth initial data, \eqref{quasi linear p} generates shock wave in finite time $T_\sigma$. 
With the arguments in \cite{EO14}, \eqref{hydrodynamic limit} can be proved in its smooth regime, that is, for any $t < T_\sigma$. 

Now we return to the weakly anharmonic case. 
To simplify the notations, denote by $(\fp_n, \fr_n)$ the solution of \eqref{quasi linear p} with $\sigma = \sigma_n$. 
The next proposition allows us to consider only the smooth regime of $(\fp_n, \fr_n)$ for any $T > 0$. 

\begin{prop}
\label{prop:smooth regime}
$\lim_{\sigma\downarrow0} T_\sigma = +\infty$. 
In particular, for any fixed time $T > 0$, we can choose $n_0$ sufficiently large, such that $(\fp_n, \fr_n)$ is smooth on $[0, T]$ for all $n \ge n_0$. 
\end{prop}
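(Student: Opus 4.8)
The plan is to prove the quantitative statement that $T_\sigma \ge c_0(\fv)/\sigma$ for some constant depending only on $\fv$ and $\beta$, which trivially gives $\lim_{\sigma\downarrow0}T_\sigma = +\infty$; when $\sigma = 0$ the system \eqref{quasi linear p} degenerates to the linear wave equation $\partial_t^2\fr_0 = \partial_x^2\fr_0$ on $\bT$, which is globally smooth, so $T_0 = +\infty$ as well. For $\sigma > 0$ small I would work in Riemann invariants. By \eqref{convex conjugate} and the identity $\bst_\sigma'(r) = \big(\beta^2\mathrm{Var}_{\pi_{\tau,\sigma}}(r)\big)^{-1}\big|_{\tau=\bst_\sigma(r)} > 0$, the system \eqref{quasi linear p} is strictly hyperbolic with characteristic speeds $\pm c_\sigma(\fr_\sigma)$, $c_\sigma := \sqrt{\bst_\sigma'}$, and by \eqref{asymptotic tension} one has $|c_\sigma - 1| \le C\sigma$ and $|c_\sigma'| = |\bst_\sigma''|/(2\sqrt{\bst_\sigma'}) \le C\sigma$ uniformly over any compact range of $r$. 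Setting $w_\pm = \fp_\sigma \mp \Phi_\sigma(\fr_\sigma)$ with $\Phi_\sigma(r) = \int_0^r c_\sigma(s)\,ds$, the system becomes the diagonal system $\partial_t w_\pm \pm c_\sigma\,\partial_x w_\pm = 0$, where $c_\sigma$ is viewed as a function of $(w_+,w_-)$ through $\fr_\sigma = \Phi_\sigma^{-1}\big((w_- - w_+)/2\big)$; note $\partial_{w_\pm}c_\sigma = \mp c_\sigma'/(2c_\sigma)$, which is again $O(\sigma)$.

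Differentiating each equation in $x$ and writing $z_\pm = \partial_x w_\pm$, along the corresponding characteristics one gets Riccati-type ODEs
\begin{equation*}
  \frac{d}{dt}z_\pm = -a_\pm z_\pm^2 - b_\pm z_\mp z_\pm,
\end{equation*}
where $a_\pm, b_\pm$ equal $\pm c_\sigma'/(2c_\sigma)$ and hence satisfy $|a_\pm| + |b_\pm| \le C\sigma$ on the relevant compact range. Since $w_\pm$ are transported along characteristics they keep their initial ranges, so $\fr_\sigma(t,\cdot)$ remains in a fixed compact interval determined by $\fv$, making all these bounds uniform in $t$. Putting $M(t) = \max_\pm\sup_x|z_\pm(t,x)|$, the ODEs yield the integral inequality $M(t) \le M(0) + C\sigma\int_0^t M(s)^2\,ds$, with $M(0) \le C|\fv'|_\bT$, hence $M(t) \le M(0)\big(1 - C\sigma M(0)t\big)^{-1}$ for $t < (C\sigma M(0))^{-1}$. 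Together with local existence for smooth solutions of quasilinear hyperbolic systems and the classical $C^1$ continuation criterion (a classical solution persists as long as its $C^1$ norm stays bounded; note $\partial_x\fp_\sigma, \partial_x\fr_\sigma$ are controlled by $z_\pm$ and $c_\sigma^{-1}$), this gives $T_\sigma \ge (C\sigma M(0))^{-1} \ge c_0(\fv)/\sigma$.

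The only genuinely delicate point is the coupling between the two characteristic families, i.e., the cross terms $b_\pm z_\mp z_\pm$, which in a generic genuinely nonlinear $p$-system produce shocks in time $O(1)$; here they are harmless precisely because, by \eqref{asymptotic tension}, their coefficients are themselves $O(\sigma)$ and thus absorbed into the same Gronwall estimate — this is the structural consequence of the vanishing anharmonicity. If one reads ``smooth'' as $C^k$ with $k \ge 2$, the bounds for $\partial_x^j w_\pm$, $j \le k$, are propagated identically, each solving a linear transport equation forced by lower-order terms with $O(\sigma)$ coefficients; alternatively the estimate $T_\sigma \gtrsim 1/\sigma$ follows from general life-span results for classical solutions of quasilinear hyperbolic systems that are small perturbations of a linear system. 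Finally, for the ``in particular'' assertion: given $T > 0$, choose $\sigma_* > 0$ with $T_\sigma > T$ for all $\sigma < \sigma_*$; since $\sigma_n \to 0$ there is $n_0$ with $\sigma_n < \sigma_*$ for all $n \ge n_0$, and then $(\fp_n, \fr_n) = (\fp_{\sigma_n}, \fr_{\sigma_n})$ is smooth on $[0, T]$.
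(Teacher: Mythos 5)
Your proof is correct and takes essentially the same route as the paper, which reduces the proposition to the quantitative life-span bound of Lemma \ref{lem:generation of shock} in Appendix \ref{appendix:p-system}: passage to Riemann invariants, a Riccati-type equation for the spatial derivative along characteristics, and the observation via \eqref{asymptotic tension} that the genuine-nonlinearity coefficient is $O(\sigma)$, yielding $T_\sigma \gtrsim 1/\sigma$. The only technical difference is that the paper uses Lax's weighted derivative $z = \sqrt{\lambda}\,\partial_x u$ to eliminate the cross term between the two characteristic families, whereas you retain it and absorb it into the same Gronwall estimate because its coefficient is also $O(\sigma)$ — both work here; note only the harmless slip that $\bst_\sigma'(r) = \big(\beta\,\mathrm{Var}_{\pi_{\tau,\sigma}}(r)\big)^{-1}$ (with $\beta$, not $\beta^2$), which does not affect the positivity you need.
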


Proposition \ref{prop:smooth regime} follows directly from \eqref{asymptotic tension} and Lemma \ref{lem:generation of shock} in Appendix \ref{appendix:p-system}. 
It is not hard to observe that the hydrodynamic equation associated to the weakly anharmonic chain turns out to be the linear $p$-system 
\begin{equation}
\label{linear p}
  \partial_t\fp = \partial_x\fr, \quad \partial_t\fr = \partial_x\fp, \quad (\fp, \fr)(0, \cdot) = \fv. 
\end{equation}
We prove a quantitative convergence in Corollary \ref{cor:quantitative hl} later. 

\subsection{Relative entropy}
For a probability measure $\mu$ on a measurable space $\Omega$, and a density function $f$ with respect to $\mu$, its \emph{relative entropy} is defined by 
\begin{equation}
\label{entropy}
  H(f; \mu) = \int_\Omega f\log fd\mu. 
\end{equation}
Given $T > 0$, let $(\fp_i^n, \fr_i^n)$ be the interpolation of $(\fp_n, \fr_n)$ in \eqref{quasi linear p}: 
$$
  (\fp_i^n, \fr_i^n)(t) = (\fp_n, \fr_n)\left(t, \frac in\right), \quad t \in [0, T],\ i \in \bT_n. 
$$
As discussed before, we assume without loss of generality that $(\fp_n, \fr_n)$ is smooth for $t \in [0, T]$. 
Denote by $\mu_{t,n}$ the \emph{local Gibbs measure} on $\Omega_n$ associated to the smooth profiles $\fp_n(t, \cdot)$ and $\bst_n(\fr_n(t, \cdot))$: 
$$
  \mu_{t,n}(d\ve) = \prod_{i\in\bT_n} \nu_{\fp_i^n,\bst_i^n,\sigma_n}^1(d\eta_i), \quad \bst_i^n = \bst_n(\fr_i^n). 
$$
Let $f_{t,n}$ be the density of the dynamics $\ve(t)$ with respect to $\mu_{t,n}$, and 
$$
  H_n(t) \triangleq H(f_{t,n}; \mu_{t,n}). 
$$
Our first theorem is an estimate on $H_n(t)$, which improves the classical upper bound $H_n(t) \le C(H_n(0) + n)$ for all $t \in [0, T]$. 

\begin{thm}
\label{thm:relative entropy}
There exists a constant $C = C_{\beta,\fv,T}$, such that 
$$
  H_n(t) \le C(H_n(0) + K_n), \quad \forall t \in [0, T],\ n \ge 1, 
$$
where $K_n$ is the deterministic sequence given by 
$$
  K_n = \max \left\{\sigma_n^\frac65\gamma_n^{-\frac15}n^{\frac45}, \gamma_n\right\}. 
$$
\end{thm}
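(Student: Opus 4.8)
The plan is to use Yau's relative entropy inequality as the starting point. Writing $f_{t,n}$ for the density of the law of $\ve(t)$ with respect to $\mu_{t,n}$, the entropy $H_n(t)$ satisfies a Gronwall-type bound
\[
  \partial_t H_n(t) \le -\gamma_n n D_n(t) + \int_{\Omega_n} \Big(\cL_n^* \mathbf 1 - \partial_t \log \mu_{t,n}\Big) f_{t,n}\, d\mu_{t,n},
\]
where $D_n(t)$ is the Dirichlet form $\sum_i \int (\cY_i \sqrt{f_{t,n}})^2 d\mu_{t,n}$ coming from the symmetric part $\cS_n$, and the second term is the entropy production. First I would expand the entropy production term by a Taylor expansion of $\log \mu_{t,n}$ around the (slowly varying) profile, exactly as in the classical Olla--Varadhan--Yau computation. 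Because the profiles $\fp_n, \fr_n$ solve the quasilinear $p$-system \eqref{quasi linear p}, the leading first-order terms cancel against $\cL_n^* \mathbf 1$, and what remains is a sum of a \emph{Boltzmann--Gibbs-type} term — the space-time average of currents $V_n'(r_{i+1}) - \bst_i^n$ and the like, recentered by their local equilibrium expectations — plus genuinely small error terms. The small errors are controlled by the smoothness of the profile (valid on $[0,T]$ by Proposition~\ref{prop:smooth regime}) and by the asymptotic tension estimates \eqref{asymptotic tension}, which are the source of the $\sigma_n$ factors.

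The heart of the matter is to bound the recentered current term. Here I would invoke the technical lemma of Section~\ref{sec:lemma} (the analogue of the key lemma of \cite{JM18a,JM18b}): it allows one to write the fluctuation of a local function minus its conditional expectation as the sum of a term $\cL_n$-exact term (hence controllable after integration by parts against $f_{t,n}$, producing $\sqrt{H_n(t) D_n(t)}$) and a dissipative remainder absorbed into $\gamma_n n D_n(t)$. Using the variational formula
\[
  \int h f\, d\mu \le \delta\, \gamma_n n D_n + \frac1\delta \Big( H_n + \text{(spectral/Rayleigh term for } h)\Big)
\]
for the relevant test functions $h$, one converts the current contribution into $\epsilon \gamma_n n D_n(t)$ plus $C \epsilon^{-1}(\,H_n(t) + \text{error})$, where the error accumulates the scaling weights. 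Choosing $\epsilon$ small enough to cancel the good term $-\gamma_n n D_n(t)$ on the right, one is left with $\partial_t H_n(t) \le C H_n(t) + C R_n$, where $R_n$ is the sum of all the collected errors. Optimizing the free parameters in the technical lemma (a cutoff scale $\ell$ for the local averaging, traded against the number of blocks and the $\sigma_n, \gamma_n, n$ weights) yields precisely $R_n = K_n = \max\{\sigma_n^{6/5}\gamma_n^{-1/5} n^{4/5}, \gamma_n\}$: the first term being the anharmonic current error after block estimates, the second the size of the $\cS_n$-generated drift term in $d r_i$.

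Finally, Gronwall's inequality gives $H_n(t) \le e^{Ct}(H_n(0) + C R_n) \le C_{\beta,\fv,T}(H_n(0) + K_n)$, which is the claimed bound. The main obstacle I expect is the current estimate: unlike the WASEP setting of \cite{JM18a,JM18b}, the configuration space here is continuous and the local functions do not admit a convenient orthonormal basis, so controlling the recentered currents requires the Poisson equation together with the equivalence of ensembles under \emph{inhomogeneous} canonical measures (Sections~\ref{sec:ee}, \ref{sec:poisson}), and tracking how the $\sigma_n$-dependence of the tension $\bst_n$ propagates through these estimates is delicate. A secondary difficulty is ensuring that the cross terms between the Hamiltonian drift and the noise-induced drift in $d r_i$ do not generate contributions larger than $K_n$; this is where the assumption $\gamma_n \ge 1$ and $\gamma_n = o(n)$, together with the bounded second derivative of $U$, are used.
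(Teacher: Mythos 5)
Your proposal follows essentially the same route as the paper: Yau's entropy inequality, decomposition of the entropy production into the recentered-current term $W_n(h_n)$ plus discretization errors $E_t^n$, the technical Lemma~\ref{lem:main lemma} (block averaging at scale $\ell$, the Poisson equation with its gradient estimate, and equivalence of ensembles for inhomogeneous canonical measures) with the optimization $\ell = \min\{(\sigma_n^{-1}\gamma_n n)^{1/5}, n^{1/2}\}$ producing the first piece of $K_n$, and finally Gronwall. The only cosmetic discrepancies are that the exact term produced by the Poisson equation is with respect to the symmetric part restricted to a block rather than $\cL_n$, and it is absorbed directly into the Dirichlet form via Cauchy--Schwarz (yielding $\delta n\gamma_n D$) rather than a $\sqrt{H_n D_n}$ cross term; neither affects the argument.
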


From Theorem \ref{thm:relative entropy}, if $\{\sigma_n\}$, $\{\gamma_n\}$ satisfy that 
\begin{equation}
\label{parameter 2}
  \lim_{n\to\infty} \gamma_n^2n^{-1} = 0, \quad \lim_{n\to\infty} \sigma_n^6\gamma_n^{-1}n^{\frac32} = 0, 
\end{equation}
then $K_n = o(\sqrt n)$ as $n \to \infty$. 
As an application of this observation, we have the following quantitative version of hydrodynamic limit. 

\begin{cor}
\label{cor:quantitative hl}
Assume \eqref{parameter 2} and a constant $C_0$ such that $H_n(0) \le C_0\sqrt n$ for all $n$. 
For any $1 \le p < 2$, $t \in [0, T]$ and smooth function $h: \bT \to \bR^2$, 
$$
  \bE_n \left[\bigg|\frac1n\sum_{i\in\bT_n} h\left(\frac in\right) \cdot \begin{pmatrix}p_i(t) - \fp_i^n(t) \\ r_i(t) - \fr_i^n(t)\end{pmatrix}\bigg|^p\right] \le \frac{C\|h\|^p}{n^{\frac p4}} 
$$
holds with some constant $C = C(\beta, \fv, T, C_0, p)$. 
\end{cor}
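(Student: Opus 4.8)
The plan is to obtain the estimate as a direct consequence of Theorem~\ref{thm:relative entropy}, combined with the entropy inequality and a uniform sub-Gaussian concentration bound for the local Gibbs measure $\mu_{t,n}$. First I would record that, under \eqref{parameter 2}, one has $K_n = o(\sqrt n)$ (this is the observation stated right after Theorem~\ref{thm:relative entropy}); together with the hypothesis $H_n(0) \le C_0\sqrt n$ and Theorem~\ref{thm:relative entropy}, this yields $H_n(t) \le C\sqrt n$ for all $t \in [0, T]$, with $C = C(\beta, \fv, T, C_0)$. This is the only point where the relative entropy estimate is used.

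Next, abbreviate $X_n(t) = \frac1n\sum_{i\in\bT_n} h(i/n)\cdot\big(\eta_i(t) - (\fp_i^n, \fr_i^n)(t)\big)$, so that the quantity to estimate is $\bE_n\big[|X_n(t)|^p\big] = \int_{\Omega_n} |X_n(t)|^p\,f_{t,n}\,d\mu_{t,n}$. Applying the entropy inequality, for every $\theta > 0$,
$$
  \bE_n\big[|X_n(t)|^p\big] \le \frac1\theta\Big(\log\int_{\Omega_n} e^{\theta|X_n(t)|^p}\,d\mu_{t,n} + H_n(t)\Big) \le \frac1\theta\Big(\log E_{\mu_{t,n}}\big[e^{\theta|X_n(t)|^p}\big] + C\sqrt n\Big),
$$
so it remains to control the exponential moment on the right.

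For that, I would use that under $\mu_{t,n}$ the coordinates $\{\eta_i\}$ are independent: the law of $p_i$ is Gaussian with mean $\fp_i^n$ and variance $\beta^{-1}$, while the law of $r_i$ is $\pi_{\bst_i^n,\sigma_n}$, which by Legendre duality has mean $\bar r_n(\bst_n(\fr_i^n)) = \fr_i^n$, and which has Gaussian tails uniform in $i, t, n$ because $V_n'' \ge 1 - \sigma_n \ge 1/2$ for $n$ large and, by Proposition~\ref{prop:smooth regime}, $(\fp_n,\fr_n)$ and hence $\bst_i^n = \bst_n(\fr_i^n)$ remain in a fixed compact set on $[0, T]$. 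Consequently $X_n(t)$ is centred under $\mu_{t,n}$ and, by independence, sub-Gaussian, with $\log E_{\mu_{t,n}}[e^{\lambda X_n(t)}] \le C\lambda^2 n^{-2}\sum_{i}|h(i/n)|^2 \le C\lambda^2\|h\|^2/n$ for all $\lambda \in \bR$, the last step being the comparison of the Riemann sum with $\|h\|^2$ for smooth $h$. Splitting $|x|^p \le R^p + R^{p-2}x^2$ and optimizing $R$ — the only place the strict inequality $p < 2$ enters — upgrades this to $\log E_{\mu_{t,n}}[e^{\theta|X_n(t)|^p}] \le C\theta^{2/(2-p)}(\|h\|^2/n)^{p/(2-p)}$.

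Substituting back gives $\bE_n[|X_n(t)|^p] \le C\theta^{p/(2-p)}(\|h\|^2/n)^{p/(2-p)} + C\sqrt n/\theta$; choosing $\theta$ of order $n^{(2+p)/4}\|h\|^{-p}$ balances the two terms and produces exactly $C\|h\|^p n^{-p/4}$. The argument is essentially routine once Theorem~\ref{thm:relative entropy} is available; the points that need care are the sub-Gaussianity of $\mu_{t,n}$ uniform in $(i, t, n)$, which relies on the quadratic growth of $V_n$ together with the a priori $C^2$ control of $(\fp_n,\fr_n)$ from Proposition~\ref{prop:smooth regime}, and the restriction $p < 2$, which is what guarantees a finite exponential moment of $|X_n(t)|^p$ under the Gaussian-type tails of $\mu_{t,n}$ and is reflected in the exponent $p/4$ of the bound.
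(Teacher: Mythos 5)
Your argument is correct and rests on exactly the same ingredients as the paper's proof (given via Corollary \ref{cor:general quantitative hl}): the entropy bound $H_n(t)\le C\sqrt n$ from Theorem \ref{thm:relative entropy} with $K_n=o(\sqrt n)$, the uniform sub-Gaussianity of the marginals of $\mu_{t,n}$ from Lemma \ref{lem:general subgaussian} (with $\bst_i^n$ in a fixed compact set and $1-\sigma_n$ bounded below), and an entropy inequality against $\mu_{t,n}$, with $p<2$ playing the same role in both arguments. The only difference is bookkeeping in the last step: the paper uses the set form \eqref{entropy inequality 1} to get the tail bound $\bP_n\{|X_n(t)|>\lambda\}\le C(1+H_n(t))\|h\|^2/(\lambda^2 n)$ and then applies the moment interpolation Lemma \ref{lem:moment}, whereas you use the exponential form \eqref{entropy inequality 2} with the truncation $|x|^p\le R^p+R^{p-2}x^2$ and optimize over $\theta$ — both yield the same $n^{-p/4}$ rate.
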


Theorem \ref{thm:relative entropy} and Corollary \ref{cor:quantitative hl} are proved in Section \ref{sec:relative entropy}. 

\subsection{Fluctuation field}
By non-equilibrium fluctuation, we mean the fluctuation field of the conserved quantities around its hydrodynamic limit. 
Define the empirical distribution of these fluctuations as 
\begin{equation}
\label{fluctuation field}
  Y_t^n(h) = \frac1{\sqrt n}\sum_{i\in\bT_n} h\left(\frac in\right) \cdot \begin{pmatrix}p_i(t) - \fp_i^n(t) \\ r_i(t) - \fr_i^n(t)\end{pmatrix}, 
\end{equation}
for $t \in [0, T]$, $n \ge 1$ and smooth function $h: \bT \to \bR^2$. 
Notice that the conserved quantities are centred with solutions of \eqref{quasi linear p} instead of \eqref{linear p}. 
Observe that as $n \to \infty$, 
$$
  \|\fp_n(t, \cdot) - \fp(t, \cdot)\| + \|\fr_n(t, \cdot) - \fr(t, \cdot)\| = O(\sigma_n). 
$$
Therefore, $(\fp_n, \fr_n)$ and $(\fp, \fr)$ are indistinguishable in \eqref{fluctuation field} only if $\sqrt n\sigma_n = o(1)$. 
This is not necessarily satisfied in our setting, see \eqref{parameter 2} and \eqref{parameter tem} later. 

By duality, \eqref{fluctuation field} defines a process $\{Y_t^n \in \cH_{-k}(\bT); t \in [0, T]\}$ for $k > 1/2$. 
The major goal of this article is to derive the macroscopic equation of $Y_t^n$. 
Suppose that there is a random variable $Y_0 \in \cH_{-k}$, such that $Y_0^n$ converges weakly to $Y_0$ as $n \to \infty$. 
In the following theorem, we prove that $Y_\cdot^n$ converges weakly to the solution of the a linear $p$-system with homogeneous sound speed under some additional assumptions. 

\begin{thm}
\label{thm:nef}
Assume \eqref{parameter 2} and some $\epsilon > 0$, such that 
\begin{equation}
\label{parameter tem}
  \limsup_{n\to\infty} \sigma_n^2K_n^{3-2\epsilon}n^{\epsilon-1} < \infty, \quad \sup_n H_n(0) < \infty, 
\end{equation}
where $K_n$ is the sequence appeared in Theorem \ref{thm:relative entropy} before. 
For every $T > 0$, $\{(Y_t^n)_{0 \le t \le T}; n \ge 1\}$ converges in law to the unique solution of 
\begin{equation}
\label{homogeneous p}
  \partial_tY(t) = \begin{bmatrix}0 &1 \\1 &0\end{bmatrix}\partial_xY(t), \quad Y(0) = Y_0, 
\end{equation}
with respect to the topology of $C([0, T]; \cH_{-k})$ for $k > 9/2$. 
\end{thm}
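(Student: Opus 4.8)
The plan is to establish the weak convergence of $\{(Y_t^n)_{0 \le t \le T}\}$ to the solution of \eqref{homogeneous p} by the standard two-step route: (i) prove tightness of the family in $C([0, T]; \cH_{-k})$ for $k > 9/2$, so that subsequential limits exist; and (ii) characterize every subsequential limit as the unique (in law) solution of the linear $p$-system \eqref{homogeneous p}, which forces convergence of the whole sequence. For step (ii) I would work with the martingale formulation: for a fixed test function $h \in C^\infty(\bT; \bR^2)$, apply Dynkin's formula to $Y_t^n(h)$. Since the conserved quantities are centred with the solution of the quasi-linear $p$-system \eqref{quasi linear p}, the drift terms coming from $\cL_n$ acting on $p_i$ and $r_i$ will, after a discrete summation by parts, produce a term of the form $\int_0^t Y_s^n\big(\begin{smallmatrix}0&1\\1&0\end{smallmatrix}\partial_x h\big)\,ds$ plus an error. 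The error has three sources: the anharmonic correction $\sigma_n(U'(r_{i+1}) - U'(r_i))$ in the momentum equation; the discrepancy between $\bst_n'(\fr_i^n)$ and $1$ (controlled by \eqref{asymptotic tension}, hence $O(\sigma_n)$); and the genuinely nonlinear fluctuation term $\sigma_n$ times a current that must be replaced by its linearization. The first and second are handled by the a priori bound on $H_n(t)$ from Theorem \ref{thm:relative entropy} together with \eqref{parameter tem}; the third is precisely the content of the non-equilibrium Boltzmann--Gibbs principle proved in Section \ref{sec:boltzmann gibbs}, which lets us replace the nonlinear current by a linear functional of $\ve$ up to a vanishing $L^2(\bP_n)$ error. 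The noise part of $\cL_n$ contributes a diffusive term of order $\gamma_n/n$ acting on $r_i$, which vanishes by $\gamma_n = o(n)$ (indeed $\gamma_n^2 n^{-1} \to 0$ from \eqref{parameter 2}), and a martingale whose quadratic variation is $O(\gamma_n n^{-1}\|h\|^2) \to 0$; thus the limiting equation is deterministic given $Y_0$, which is consistent with \eqref{homogeneous p}.

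For the tightness in step (i), I would use the Mitoma-type criterion: it suffices to show that $\{Y_\cdot^n(h)\}$ is tight in $C([0, T]; \bR)$ for each fixed $h$ in a countable dense subset of $\cH_k$, together with a uniform bound $\sup_n \bE_n\big[\sup_{t \le T} \|Y_t^n\|_{-k}^2\big] < \infty$ guaranteeing that the limit lives in $\cH_{-k}$. The one-dimensional tightness follows from the Dynkin decomposition above: the bounded-variation part has equicontinuous trajectories because its time derivative is bounded in $L^2(\bP_n)$ uniformly in $n$ (again using Theorem \ref{thm:relative entropy} and the Boltzmann--Gibbs replacement), and the martingale part is controlled via its quadratic variation and the Burkholder--Davis--Gundy inequality. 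The uniform Sobolev bound on $Y_t^n$ comes from the entropy inequality: $\bE_n[\exp\{\theta \cdot (\text{linear statistic})\}]$ is controlled by $H_n(0) + K_n$ plus the log-moment-generating function of the linear statistic under $\mu_{t,n}$, which is finite since the Gibbs marginals have Gaussian/sub-Gaussian tails; choosing $k > 9/2$ gives enough decay in the Fourier modes for summability. The reason a larger $k$ is needed here than the $k > 1/2$ sufficient merely to define $Y_t^n$ is the loss incurred in the moment estimates for the martingale and drift terms, where derivatives of $h$ appear.

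The main obstacle will be step (ii), specifically making the Boltzmann--Gibpanese-principle replacement effective in the genuinely out-of-equilibrium regime while the anharmonicity $\sigma_n$ is only mildly small. The nonlinear current $\sigma_n U'(r_i)$ (or the associated gradient term) must be replaced by $\sigma_n\big(\bst_n'(\fr_i^n)(r_i - \fr_i^n) + \text{const}\big)$ up to an error that is $o(1)$ after integrating against $h$ and over time; this replacement is where the quantitative parameters in \eqref{parameter tem} enter, because the cost of the replacement in the generalized Feynman--Kac estimate scales with powers of $K_n$, $\sigma_n$ and $n$, and the condition $\limsup_n \sigma_n^2 K_n^{3 - 2\epsilon} n^{\epsilon - 1} < \infty$ is exactly what closes this estimate. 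One has to be careful that the local Gibbs reference measure $\mu_{t,n}$ is itself spatially inhomogeneous (through $\fp_i^n$ and $\bst_i^n$), so the equivalence-of-ensembles and Poisson-equation inputs from Sections \ref{sec:ee} and \ref{sec:poisson} must be applied with the inhomogeneous profile; this is precisely the role of the technical lemma of Section \ref{sec:lemma}. Once the replacement is in place, identifying the limit is routine: the limiting martingale is a continuous local martingale with zero quadratic variation, hence zero, and $Y$ solves \eqref{homogeneous p} weakly; uniqueness of weak solutions of this linear symmetric-hyperbolic system with given $\cH_{-k}$-valued initial data is classical (it can be solved mode by mode in the Fourier basis, each mode evolving by a $2\times 2$ linear ODE with purely imaginary eigenvalues), so the whole sequence converges.
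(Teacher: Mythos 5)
Your overall architecture (tightness plus identification of limit points, with the Boltzmann--Gibbs principle handling the nonlinear current and Theorem \ref{thm:relative entropy} supplying the entropy input) matches the paper, but the execution differs in two places worth comparing. For the identification step the paper does not run the martingale problem with a fixed test function: it tests $Y_t^n$ against the solution $h_n(t,\cdot)$ of the \emph{backward} transport equation \eqref{backward 1}, so that the leading drift $\partial_s h_n - L_i^n h_n$ is $O(n^{-1})$ pointwise and one shows directly that $\sup_{t\le T}|Y_t^n(h_n(t)) - Y_0^n(H)| \to 0$ in probability; this sidesteps having to pass to the limit in $\int_0^t Y_s^n(L^n h)\,ds$ along subsequences. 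Your fixed-test-function route is viable but requires that extra continuity argument on the path space. For tightness the paper does not use Mitoma's criterion: it proves $L^p$ moment bounds for $p<2$ on each Fourier mode (Lemmas \ref{lem:tightness 1} and \ref{lem:tightness 2}) and applies Kolmogorov--Prokhorov, precisely because the entropy inequality \eqref{entropy inequality 1} only yields tail bounds of order $\lambda^{-2}$ and hence moments of order strictly less than $2$.

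Two concrete inaccuracies in your sketch. First, the uniform bound $\sup_n \bE_n[\sup_{t\le T}\|Y_t^n\|_{-k}^2] < \infty$ you invoke is not obtainable by the paper's methods: transferring sub-Gaussian bounds under $\mu_{t,n}$ to $\bP_n$ via the entropy inequality loses the second moment (this is exactly why the paper works with $p<2$ throughout, cf.\ Lemma \ref{lem:moment} and Corollary \ref{cor:general quantitative hl}); likewise the Boltzmann--Gibbs replacement holds in probability, not in $L^2(\bP_n)$. Second, you attribute the hypothesis \eqref{parameter tem} to closing the Boltzmann--Gibbs estimate in the finite-dimensional convergence; in fact the paper's Remark after Theorem \ref{thm:nef} states that \eqref{parameter tem} is needed \emph{only} for tightness --- specifically for the a priori moment bound on $Y_t^n(f)$ in Lemma \ref{lem:tightness 2}, which controls $\sR_t^n$ and $\sW_t^n$ --- while the finite-dimensional laws converge under \eqref{parameter 2} and $H_n(0) = o(\sqrt n)$ alone. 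Neither issue destroys your strategy, but the $L^2$ claims would have to be downgraded to $L^p$, $p<2$, and the tightness argument reorganized accordingly.
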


\begin{rem}
The additional assumptions in \eqref{parameter tem} are necessary only for the proof of tightness, see Section \ref{sec:tightness}. 
For the convergence of finite-dimensional laws of $Y_t^n$ proved in Section \ref{sec:fdd}, it is sufficient to assume that $H_n(0) = o(\sqrt n)$ and \eqref{parameter 2}. 
\end{rem}

\begin{rem}
In the particular case that $\sigma_n = n^{-a}$, $\gamma_n = n^b$ with $a > 0$, $b \ge 0$, the conditions \eqref{parameter 2} and \eqref{parameter tem} are equivalent to 
$$
  a > \frac15, \quad b \in \big(f_-(a), f_+(a)\big) \cap \left[0, \frac12\right), 
$$
where $f_\pm(a)$ are respectively given by 
$$
  f_-(a) = \frac{7 - 28a}3 \quad\text{and}\quad f_+(a) = \frac{2a + 1}3. 
$$
Hence, if $\sigma_n$ decays strictly faster than $n^{-1/5}$, then the result in Theorem \ref{thm:nef} holds with some properly chosen sequence $\gamma_n$. 
\end{rem}

The proof of Theorem \ref{thm:nef} is divided into two parts. 
In Section \ref{sec:fdd} we show the convergence of finite-dimensional distribution, based on the Boltzmann--Gibbs principle proved in Section \ref{sec:boltzmann gibbs}. 
In Section \ref{sec:tightness} we show the tightness of the laws of $Y_\cdot^n$. 
The weak convergence in Theorem \ref{thm:nef} then follows from the uniqueness of the solution of \eqref{homogeneous p}. 

\section{The main lemma}
\label{sec:lemma}

Fix some $C^1$-smooth function $\tau = \tau(\cdot)$ on $\bT$. 
For each $n \ge 1$, define a product measure $\mu_n$ (dependent on $\tau(\cdot)$, $\sigma_n$) on $\bR^n$ by 
$$
  \mu_n(d\bfr) = \prod_{i\in\bT_n} \pi_{\tau_i^n,\sigma_n}(dr_i), \quad \tau_i^n = \tau\left(\frac in\right). 
$$
Note that $\mu_n$ is the $(r_1, \ldots, r_n)$-marginal distribution of a local Gibbs measure. 
To simplify the notations, let $\langle\;\cdot\;\rangle_{\tau,\sigma}$ denote the integral with respect to $\pi_{\tau,\sigma}$. 
Define 
\begin{equation}
\label{local function}
  \begin{aligned}
    \Phi_i^n(r_i) &= V'_n(r_i) - \langle V'_n \rangle_{\tau_i^n,\sigma_n} - \frac d{dr}\langle V'_n \rangle_{\bst_n(r),\sigma_n}\Big|_{r=r_i^n}(r_i - r_i^n) \\
    &= V'_n(r_i) - \tau_i^n - \bst'_n(r_i^n)\big(r_i - r_i^n\big), 
  \end{aligned}
\end{equation}
where $r_i^n = \bar r_n(\tau_i^n)$ and $\bar r_n$, $\bst_n$ are functions given by \eqref{convex conjugate}, \eqref{simple notation}. 
In this section, we prove an estimate for the space variance associated to $\Phi_i^n$. 

For a probability measure $\mu$ on $\bR^n$ and a density function $f$ with respect to $\mu$, define the Dirichlet form associated to $\cS_{n,\sigma}$ by 
\begin{equation}
\label{dirichlet r}
  D(f; \mu) = \frac12\sum_{i\in\bT_n} \int_{\bR^n} (\cY_if)^2d\mu. 
\end{equation}
For $g \in C^1(\bT)$, define the random local functional 
\begin{equation}
\label{spatial variation}
  W_n(g) = \sum_{i\in\bT_n} g_i^n\Phi_i^n, \quad g_i^n = g\left(\frac in\right). 
\end{equation}

\begin{lem}
\label{lem:main lemma}
For any $\delta > 0$ and density function $f$ with respect to $\mu_n$, there exists a random functional $W_{n,\delta}(g)$ (depending on $\tau(\cdot)$ and $f$), such that 
\begin{equation}
\label{main lemma 1}
 \int f\big[W_n(g) - W_{n,\delta}(g)\big]d\mu_n \le \delta n\gamma_n D\left(\sqrt f; \mu_n\right), 
\end{equation}
where $W_n(g)$ is defined through \eqref{local function} and \eqref{spatial variation} above, and 
\begin{equation}
\label{main lemma 2}
  \int f\big|W_{n,\delta}(g)\big|d\mu_n \le C(1 + M_g)\left[H(f; \mu_n) + \left(1 + \frac1\delta\right)\kappa_n\right], 
\end{equation}
where $C$ is a constant dependent on $\beta$, $|\tau|_\bT$ and $|\tau'|_\bT$, and 
$$
  M_g = |g|_\bT^2 + |g'|_\bT, \quad \kappa_n = \max \left\{\sigma_n^\frac65\gamma_n^{-\frac15}n^{\frac45}, \sigma_n\sqrt n\right\}. 
$$
In particular, if the second limit in \eqref{parameter 2} is satisfied, then $\kappa_n = o(\sqrt n)$. 
\end{lem}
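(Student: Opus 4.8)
The plan is to decompose $W_n(g) = \sum_i g_i^n \Phi_i^n$ into a "fluctuating" part that can be absorbed into the Dirichlet form via a gradient replacement, and a remainder $W_{n,\delta}(g)$ that can be controlled by the entropy using the equivalence of ensembles. The starting observation is that $\Phi_i^n$ is, by the very definition in \eqref{local function}, the remainder in the first-order Taylor expansion of the tension $\bst_n$ around $r_i^n$: it has vanishing mean under $\pi_{\tau_i^n,\sigma_n}$ and vanishing derivative of the mean. The key structural fact is that $\Phi_i^n$ can be related to a function in the range of the symmetric generator associated with the exchange noise. Concretely, since the noise conserves $r_i + r_{i+1}$ on each bond, one considers the single-site Poisson equation: find $u_i = u_i(r_i)$ (or a local function on a pair of sites) solving an equation of the form $\cL_i u_i = \Phi_i^n$ where $\cL_i$ is the relevant one- or two-site generator. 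The solvability is exactly the content of the centering conditions on $\Phi_i^n$, and the gradient estimate for this Poisson equation (Section~\ref{sec:poisson}) gives quantitative control on $\|u_i'\|$ in terms of $\sigma_n$ and the size of $\Phi_i^n$.

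First I would use the Poisson solution to write $W_n(g) - W_{n,\delta}(g)$ as an exact fluctuation term, i.e. as $\cS_{n,\sigma_n}$ (or $\cA_{n,\sigma_n}$) applied to $\sum_i g_i^n u_i$ plus lower-order corrections coming from (a) the spatial variation of $g_i^n$ and $\tau_i^n$ across bonds, which produces a factor $O(1/n)$ and explains the $|g'|_\bT$ dependence, and (b) the mismatch between the full generator and its single-bond pieces. Then the integration-by-parts identity $\int f \cS_{n,\sigma} h \, d\mu_n$ against the Dirichlet form, combined with a Cauchy--Schwarz / Young inequality of the type $\int f \cS h\, d\mu \le \delta n\gamma_n D(\sqrt f;\mu) + (C/\delta)\cdot(\text{explicit } L^2 \text{ norm of gradients of } h)$, produces \eqref{main lemma 1} with the prefactor $n\gamma_n$ matching the scaling in the generator \eqref{generator}. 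The residual $L^2$ norm is where $\kappa_n$ is born: the gradient estimate for the Poisson equation contributes the $\sigma_n^{6/5}\gamma_n^{-1/5}n^{4/5}$ term, while the bare Taylor remainder contributes the $\sigma_n\sqrt n$ term — both appearing in the $\max$.

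Next I would establish \eqref{main lemma 2}. The remainder $W_{n,\delta}(g)$ should be designed so that it is itself a sum of local functions whose $\pi_{\tau_i^n,\sigma_n}$-averages are small (of order $\kappa_n$ or $\kappa_n/\delta$). For such a sum one invokes the entropy inequality: $\int f\, \xi\, d\mu_n \le H(f;\mu_n) + \log\int e^{\xi} d\mu_n$, applied to $\xi = \lambda W_{n,\delta}(g)$ with $\lambda$ optimized. This reduces the matter to a moment/exponential-moment estimate for $W_{n,\delta}(g)$ under $\mu_n$ itself, which is precisely where the equivalence of ensembles (Section~\ref{sec:ee}) enters: under the product measure $\mu_n$ with slowly varying $\tau(\cdot)$, the local functions are nearly independent with small variance, so a standard large-deviation / Gaussian-tail bound gives $\log\int e^{\lambda W_{n,\delta}}d\mu_n \le C(1+M_g)\lambda^2 \kappa_n$ for $\lambda$ in a suitable range, after which optimizing in $\lambda$ yields the claimed bound $C(1+M_g)[H(f;\mu_n) + (1+1/\delta)\kappa_n]$. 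The dependence on $|\tau|_\bT$ and $|\tau'|_\bT$ propagates through the uniform bounds on the one-site moments of $\pi_{\tau,\sigma}$ and the variation of $\tau_i^n$ across neighboring sites.

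\textbf{Main obstacle.} The delicate point is tracking the exact power $\sigma_n^{6/5}\gamma_n^{-1/5}n^{4/5}$ in $\kappa_n$, which forces a nontrivial optimization. The gradient estimate for the Poisson equation scales badly as $\sigma_n \to 0$ (the generator degenerates), so one cannot simply bound $\|u_i'\|$ by a constant; instead the Poisson solution must be split into a "harmonic" part (handled explicitly, since the unperturbed problem is Gaussian) and an $O(\sigma_n)$-perturbative part, with the two pieces balanced against the noise strength $\gamma_n$ and the available Dirichlet budget $\delta n \gamma_n$. Getting the precise exponents requires choosing an internal cutoff — e.g. splitting $\Phi_i^n$ at a threshold depending on $r_i$, or truncating the Taylor remainder — and optimizing that cutoff against $\delta$, $\sigma_n$, $\gamma_n$, $n$; this bookkeeping, rather than any single conceptual difficulty, is the heart of the proof. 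The equivalence-of-ensembles input is also somewhat subtle because $\mu_n$ is inhomogeneous (the $\tau_i^n$ vary), so one needs the version of the local CLT / moment bounds that is uniform over the relevant range of $\tau$, which is supplied by Section~\ref{sec:ee}.
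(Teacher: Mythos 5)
Your overall architecture (Poisson equation $\to$ integration by parts against the Dirichlet form $\to$ entropy inequality plus equivalence of ensembles for the remainder) matches the paper, but there is a genuine gap at the very first step. You propose to solve a \emph{single-site or two-site} Poisson equation $\cL_i u_i = \Phi_i^n$ and assert that "the solvability is exactly the content of the centering conditions on $\Phi_i^n$." This is false: the noise $\cY_{i,n}^*\cY_i$ acts on the bond $(i,i+1)$ and conserves $r_i+r_{i+1}$, so solvability of a two-site Poisson equation requires that the \emph{conditional} expectation of $\Phi_i^n$ given $r_i+r_{i+1}$ vanish, which does not follow from $\Phi_i^n$ having zero grand-canonical mean and zero derivative of the mean. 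The paper instead works on mesoscopic blocks of size $\ell$: it replaces $\Phi_i^n$ by the block average $\ell^{-1}\sum_{j<\ell}\Phi_{i+j}^n$ \emph{minus} its microcanonical conditional expectation $\Phi_{i,\ell}^n$ given $\sum_{j<\ell} r_{i+j}$ (only this difference is in the range of the block generator), and then must separately control (i) the leftover conditional expectation $\sum_i g_i^n\Phi_{i,\ell}^n$ via the equivalence of ensembles, contributing $\sigma_n n/\ell$, and (ii) the error $\sum_i g_{i,\ell}^n\Phi_i^n$ from replacing $g_i^n$ by its block average, contributing $\sigma_n\ell$ via sub-Gaussian bounds. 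Neither of these two terms appears in your decomposition, and without the block parameter $\ell$ there is nothing to optimize.

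Consequently your account of where $\kappa_n$ comes from is also off. The exponent $\sigma_n^{6/5}\gamma_n^{-1/5}n^{4/5}$ arises from optimizing the block size $\ell$ in the three-term bound $\sigma_n\ell+\sigma_n n/\ell+\sigma_n^2\ell^4/(\delta\gamma_n)$ (the $\ell^4$ coming from the gradient estimate of Proposition \ref{prop:lipschitz estimate} applied on a block of $\ell$ sites, and the extra $\sigma_n$ from $\cY\Psi_{i,\ell}^n=O(\sigma_n/\ell)$ since the nonlinearity of $V_n''$ is $\sigma_nU''$); the term $\sigma_n\sqrt n$ is the regime where the cap $\ell\le\sqrt n$ binds. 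It is not produced by "splitting $\Phi_i^n$ at a threshold depending on $r_i$" or by a harmonic/perturbative splitting of the Poisson solution. Also, the generator does not degenerate as $\sigma_n\to0$: $V_n''\ge1-\sigma_n$ is uniformly convex, so the gradient estimate is uniform in $\sigma_n$; the smallness in $\sigma_n$ enters only through the right-hand side of the Poisson equation.
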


To prove Lemma \ref{lem:main lemma}, we make use of the sub-Gaussian property of the local function $\Phi_i^n$. 
A real-valued random variable $X$ is sub-Gaussian of order $C > 0$, if 
\begin{equation}
  \log E \big[e^{sX}\big] \le \frac{Cs^2}2, \quad \forall s \in \bR. 
\end{equation}
Recall that $V_n = r^2/2 + \sigma_nU$ with $U''$ bounded. 
We have the following lemma. 

\begin{lem}
\label{lem:subgaussian}
For all $n \ge 1$, $i \in \bT_n$, $U'(r_i) - \langle U' \rangle_{\tau_i^n,\sigma_n}$ and $r_i - r_i^n$ are sub-Gaussian of a uniform order dependent only on $\beta$ and $|\tau|_\bT$. 
\end{lem}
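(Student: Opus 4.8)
\emph{Proof strategy.} The starting observation is that each one-site law is strongly log-concave with a constant that does not degenerate. Write $\pi_{\tau,\sigma}(dr) = Z_\sigma(\tau)^{-1}e^{-W_{\tau,\sigma}(r)}dr$ with $W_{\tau,\sigma}(r) = \beta\big(V_\sigma(r) - \tau r\big)$; then $W_{\tau,\sigma}''(r) = \beta V_\sigma''(r) = \beta\big(1 + \sigma U''(r)\big) \ge \beta(1-\sigma)$, using $|U''|\le 1$. Since $\sigma_n\to 0$ we may assume $\sigma_n\le\tfrac12$, so that $W_{\tau_i^n,\sigma_n}'' \ge \beta/2$ uniformly in $i\in\bT_n$, $n\ge1$ and in the $C^1$ profile $\tau(\cdot)$; thus each $\pi_{\tau_i^n,\sigma_n}$ is strongly log-concave with a uniform constant.

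Given this, the plan is to invoke the standard sub-Gaussian concentration of Lipschitz functions under strongly log-concave measures. Concretely: by the Bakry--\'Emery criterion, $\pi_{\tau_i^n,\sigma_n}$ satisfies a logarithmic Sobolev inequality with constant $\beta/2$, and Herbst's argument then yields that for every $1$-Lipschitz $\phi:\bR\to\bR$ the centred variable $\phi(r) - \langle\phi\rangle_{\tau_i^n,\sigma_n}$ is sub-Gaussian of order $2/\beta$, uniformly in $i$, $n$ and $\tau(\cdot)$. (For the linear observable one may alternatively argue directly: $\log\langle e^{s(r - r_i^n)}\rangle_{\tau_i^n,\sigma_n} = \beta\big[G_{\sigma_n}(\tau_i^n + s/\beta) - G_{\sigma_n}(\tau_i^n) - (s/\beta)G_{\sigma_n}'(\tau_i^n)\big]$, and a Taylor expansion together with the one-dimensional Brascamp--Lieb bound $G_{\sigma_n}'' = \langle(r-\langle r\rangle)^2\rangle_{\tau_i^n,\sigma_n} \le (\beta(1-\sigma_n))^{-1} \le 2/\beta$ gives the claim with order $2/\beta^2$.)

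It then suffices to apply this with two choices of $\phi$. First, $\phi=\mathrm{id}$ has Lipschitz constant $1$ and $\langle r\rangle_{\tau_i^n,\sigma_n} = \bar r_n(\tau_i^n) = r_i^n$, so $r_i - r_i^n$ is sub-Gaussian of the asserted order. Second, $\phi = U'$ has Lipschitz constant $\sup_r|U''(r)|\le 1$, so $U'(r_i) - \langle U'\rangle_{\tau_i^n,\sigma_n}$ is sub-Gaussian of the asserted order. In both cases the order depends only on $\beta$, in particular only on $\beta$ and $|\tau|_\bT$, as claimed.

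The one step that requires care is the uniformity of the log-concavity constant, i.e.\ the control of $(1-\sigma_n)^{-1}$; this is precisely where the hypothesis $\sigma_n\to 0$ enters, and it cannot be dropped, since for a potential like $U(r) = -r^2/2$ (which still satisfies $U''\in[-1,1]$) with $\sigma$ near $1$ the measure $\pi_{\tau,\sigma}$ is Gaussian with variance $(\beta(1-\sigma))^{-1}\to\infty$. Once strong log-concavity is secured, the rest of the argument is soft and uses nothing about $U$ beyond $|U''|\le 1$.
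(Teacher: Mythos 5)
Your proof is correct, but it takes a genuinely different route from the paper. The paper deduces Lemma \ref{lem:subgaussian} as a special case of Lemma \ref{lem:general subgaussian}, which assumes only the two-sided quadratic bound $c_-r^2 \le 2V(r) \le c_+r^2$ (no convexity) and only the linear growth $|F(r)| \le c|r|$ (no Lipschitz continuity), and argues by directly bounding $E_{\pi_\tau}[\exp(tF^2)]$ and invoking the $\phi_2$-criterion of Lemma \ref{lem:orlicz}; the resulting order depends on $\tau$, whence the $|\tau|_\bT$ in the statement. You instead exploit the uniform strong log-concavity $W_{\tau,\sigma}'' \ge \beta(1-\sigma_n)$ and the fact that both observables ($\mathrm{id}$ and $U'$, via $|U''|\le 1$) are $1$-Lipschitz, and conclude by Bakry--\'Emery plus Herbst. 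For the lemma at hand this is perfectly valid and even slightly stronger, since your order is independent of $\tau$; but note that it does not subsume the paper's general lemma, which is reused later (e.g.\ in Corollary \ref{cor:general quantitative hl}) for merely continuous $F$ with $|F(r)|\le c|r|$ that need not be Lipschitz, so the Orlicz-type argument cannot be dispensed with elsewhere. One small slip in your parenthetical alternative: with the paper's normalization $G_\sigma = \beta^{-1}\log Z_\sigma$ one has $G_\sigma'' (\tau)= \beta\,\mathrm{Var}_{\pi_{\tau,\sigma}}(r) \le (1-\sigma)^{-1}$, not the variance itself; this only changes the constant (one gets order $O(1/\beta)$ rather than $O(1/\beta^2)$) and does not affect the argument.
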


The proof of the sub-Gaussian property is direct and is postponed to the end of this section. 
Some general properties of sub-Gaussian variables used hereafter are summarized in Appendix \ref{appendix:subgaussian}. 
Now we state the proof of Lemma \ref{lem:main lemma}. 

\begin{proof}[Proof of Lemma \ref{lem:main lemma}]
Pick some $\ell = \ell(n) \ll n$ which grows with $n$. 
Let 
$$
  g_{i,\ell}^n = g_i^n - \frac1\ell\sum_{j=0}^{\ell-1}g_{i-j}^n, \quad \Phi_{i,\ell}^n = E_{\mu_n} \left[\frac1\ell\sum_{j=0}^{\ell-1} \Phi_{i+j}^n~\bigg|~\sum_{j=0}^{\ell-1} r_{i+j}\right]. 
$$
For each $i \in \bT_n$, denote by $\cY_{i,n}^*$ the adjoint of $\cY_i$ with respect to the inhomogeneous measure $\mu_n$. 
It is easy to see that for smooth $F$, 
\begin{equation}
\label{adjoint}
  \cY_{i,n}^*F = \beta\big(V'_n(r_{i+1}) - V'_n(r_i) - \tau_{i+1}^n + \tau_i^n\big)F - \cY_iF. 
\end{equation}
Let $\psi_{i,\ell}^n = \psi_{i,\ell}^n(r_i, \ldots, r_{i+\ell-1})$ solve the Poisson equation 
\begin{equation}
\label{poisson}
  \sum_{j=0}^{\ell-2} \cY_{i+j,n}^*\cY_{i+j}\psi_{i,\ell}^n = \Psi_{i,\ell}^n, \quad \Psi_{i,\ell}^n = \frac1\ell\sum_{j=0}^{\ell-1} \Phi_{i+j}^n - \Phi_{i,\ell}^n. 
\end{equation}
By Proposition \ref{prop:lipschitz estimate}, $\psi_{i,\ell}^n \in C_b^1(\bR^\ell)$. 
Define the auxiliary functionals 
\begin{align*}
  &W_{n,\ell}^{(1)}(g) = \sum_{i\in\bT_n} g_{i,\ell}^n\Phi_i^n, \quad W_{n,\ell}^{(2)}(g) = \sum_{i\in\bT_n} g_i^n\Phi_{i,\ell}^n, \\
  &W_{n,\ell}^{(3)}(g) = \frac{2(\ell - 1)}{n\gamma_n}\sum_{i\in\bT_n} (g_i^n)^2\sum_{j=0}^{\ell-2} \big(\cY_{i+j}\psi_{i,\ell}^n\big)^2, 
\end{align*}
for each $n \ge 1$, $\ell$ and $i \in \bT_n$. 

Our first step is to observe that for any $\delta > 0$, 
\begin{align*}
  &\int f\left[W_n(g) - W_{n,\ell}^{(1)}(g) - W_{n,\ell}^{(2)}(g) - \frac1\delta W_{n,\ell}^{(3)}(g)\right]d\mu_n \\
  =&\ \int \sum_{i\in\bT_n} g_i^n\sum_{j=0}^{\ell-2} \big(\cY_{i+j}\psi_{i,\ell}^n\big)\big(\cY_{i+j}f\big)d\mu_n - \frac1\delta \int fW_{n,\ell}^{(3)}(g)d\mu_n \\
  \le&\ \frac{\delta n\gamma_n}{8(\ell - 1)}\int f^{-1}\sum_{i\in\bT_n} \sum_{j=0}^{\ell-2} (\cY_{i+j}f)^2d\mu_n = \delta n\gamma_nD\left(\sqrt f; \mu_n\right). 
\end{align*}
Hence, the strategy is to bound the integrals of the auxiliary functionals by relative entropy together with terms of $\ell$ and $n$, and then optimize the order of $\ell$. 

For the first functional $W_{n,\ell}^{(1)}$, note that $\Phi_i^n = \sigma_n\phi_i^n$, where 
\begin{align*}
  \phi_i^n(r_i) &= U'(r_i) - \langle U' \rangle_{\tau_i^n,\sigma_n} - \frac d{dr}\langle U' \rangle_{\bst_n(r_i^n),\sigma_n}\big(r_i - r_i^n\big), \\
  &= U'(r_i) - \frac{\tau_i^n - r_i^n}{\sigma_n} - \frac{\bst'_n(r_i^n) - 1}{\sigma_n}\big(r_i - r_i^n\big). 
\end{align*}
In view of \eqref{asymptotic tension}, there is a constant $C_{\beta,|\tau|_\bT}$, such that 
$$
  \left|\frac {\bst'_n(r_i^n) - 1}{\sigma_n}\right| \le C_{\beta,|\tau|_\bT}, \quad \forall n \ge 1,\ i \in \bT_n. 
$$
As $\{\phi_i^n; i \in \bT_n\}$ is an independent family, by the entropy inequality \eqref{entropy inequality 3}, 
$$
  \int f\big|W_{n,\ell}^{(1)}\big|d\mu_n \le \frac1\alpha\left(H(f; \mu_n) + \sum_{i\in\bT_n} \log\int e^{\alpha\sigma_n|g_{i,\ell}^n\phi_i^n|}d\mu_n\right), 
$$
for any $\alpha > 0$. 
By Lemma \ref{lem:subgaussian} and direct computation, $\phi_i^n$ is sub-Gaussian of a uniform order $c = c_{\beta,|\tau|_\bT}$. 
Choosing $\alpha_{n,\ell} = (2\sigma_n|g_{i,\ell}^n|)^{-1}$ and applying Lemma \ref{lem:subgaussian absolute}, 
$$
  \int f\big|W_{n,\ell}^{(1)}\big|d\mu_n \le \frac1{\alpha_{n,\ell}}\left[H(f; \mu_n) + \sum_{i\in\bT_n} \left(\log3 + \frac c4\right)\right]. 
$$
As $|g_{i,\ell}^n| \le C|g'|_\bT\ell n^{-1}$ with some universal constant $C$, therefore, 
\begin{equation}
\label{main lemma estimate 1}
  \begin{aligned}
    \int f\big|W_{n,\ell}^{(1)}\big|d\mu_n &\le \frac{C|g'|_\bT\sigma_n\ell}n\big(H(f; \mu_n) + C_1n\big) \\
    &\le C|g'|_\bT\big(H(f; \mu_n) + C_2\sigma_n\ell\big). 
  \end{aligned}
\end{equation}

The second functional $W_{n,\ell}^{(2)}$ is the variance of a canonical ensemble. 
Indeed, $\Phi_{i,\ell}^n = \sigma_n\phi_{i,\ell}^n$, where $\phi_{i,\ell}^n$ is the conditional expectation on the box $(r_i, \ldots, r_{i+\ell-1})$: 
$$
  \phi_{i,\ell}^n = E_{\mu_n} \left[\frac1\ell\sum_{j=0}^{\ell-1} \phi_{i+j}^n~\bigg|~\sum_{j=0}^{\ell-1} r_{i+j}\right]. 
$$
The definition of $\phi_i^n$ suggests that this term can be estimated by the theory of \emph{equivalence of ensembles} presented in Section \ref{sec:ee}. 
First notice that $\{\phi_{i,\ell}^n, i \in \bT_n\}$ is an $\ell$-independent class. 
With \eqref{entropy inequality 3} we obtain that for any $\alpha > 0$, 
$$
  \int f\big|W_{n,\ell}^{(2)}\big|d\mu_n \le \frac1\alpha\left(H(f; \mu_n) + \frac1\ell\sum_{i\in\bT_n} \log\int e^{\alpha\ell\sigma_n|g_i^n\phi_{i,\ell}^n|}d\mu_n\right). 
$$
Since $\phi_{i,\ell}^n$ is sub-Gaussian of order $c$, in view of Lemma \ref{lem:subgaussian absolute}, 
$$
  \int e^{s|\phi_i^n|}d\mu_n \le \frac{1+s}{1-s}e^{\frac{cs}2} \le e, \quad \forall |s| \le A = A(c). 
$$
Hence, Proposition \ref{prop:ee exponential} yields that if $\ell \le O(n^{2/3})$, 
$$
  \int e^{s|\ell\phi_{i,\ell}^n|}d\mu_n \le C_1, \quad \forall |s| \le A' = A'(c), 
$$
with some universal constant $C_1$. 
Choosing $\alpha_n = A'(|g|_\bT\sigma_n)^{-1}$, 
\begin{equation}
\label{main lemma estimate 2}
  \int f\big|W_{n,\ell}^{(2)}\big|d\mu_n \le \frac1{\alpha_n}\left(H(f; \mu_n) + \frac{C_1n}\ell\right) \le C_2|g|_\bT\left(H(f; \mu_n) + \frac{C_1\sigma_nn}\ell\right). 
\end{equation}

For the third functional $W_{n,\ell}^{(3)}$, recall the Poisson equation \eqref{adjoint}--\eqref{poisson}. 
Using the $C^1$ estimate of the Poisson equation in Proposition \ref{prop:lipschitz estimate}, 
$$
  \sum_{j=0}^{\ell-2} \big(\cY_{i+j}\psi_{i,\ell}^n\big)^2 \le C_\beta\ell^4\sup_{\bR^\ell} \left\{\sum_{j=0}^{\ell-2} \big(\cY_{i+j}\Psi_{i,\ell}^n\big)^2\right\}, 
$$
where the supremum is taken over all $(r_i, r_{i+1}, \ldots, r_{i+\ell-1}) \in \bR^\ell$. 
From the definition of $\Psi_{i,\ell}^n$, with $b_i^n = \bst'_n(r_i^n) = \bst'_n(\bar r_n(\tau_i^n))$, 
\begin{align*}
  \cY_{i+j}\Psi_{i,\ell}^n &= \frac1\ell \big(V''_n(r_{i+j-1}) - V''_n(r_{i+j}) - b_{i+j-1}^n + b_{i+j}^n\big) \\
  &= \frac1\ell \big(\sigma_nU''(r_{i+j-1}) - \sigma_nU''(r_{i+j}) - b_{i+j-1}^n + b_{i+j}^n\big). 
\end{align*}
In view of the condition $|U''(r)| \le 1$ and \eqref{asymptotic tension}, 
$$
  \big|\cY_{i+j}\Psi_{i,\ell}^n\big| \le \frac{C\sigma_n}\ell\left(1 + \frac1n\right), 
$$
with some constant $C$ dependent on $|\tau'|_\bT$. 
Therefore, 
\begin{equation}
\label{main lemma estimate 3}
  \int fW_{n,\ell}^{(3)}d\mu_n \le \frac{2CC_\beta(\ell - 1)\sigma_n^2}{n\gamma_n} \sum_{i\in\bT_n} (g_i^n)^2\ell^3 \le \frac{C_1|g|_\bT^2\sigma_n^2\ell^4}{\gamma_n}. 
\end{equation}

Combining \eqref{main lemma estimate 1}--\eqref{main lemma estimate 3}, we obtain that if $\ell \le O(n^{2/3})$, 
\begin{align*}
  &\int f\left|W_{n,\delta}^{(1)}(g) + W_{n,\delta}^{(2)}(g) + \frac1\delta W_{n,\delta}^{(3)}(g)\right|d\mu_n \\
  \le&\ C\big(1 + |g|_\bT^2 + |g'|_\bT\big)\left(H(f; \mu_n) + \sigma_n\ell + \frac{\sigma_nn}\ell + \frac{\sigma_n^2\ell^4}{\delta\gamma_n}\right), 
\end{align*}
holds with some constant $C = C(\beta, |\tau|_\bT, |\tau'|_\bT)$. 
The optimal choice of $\ell$ is 
$$
  \ell(n) = \min\left\{(\sigma_n^{-1}\gamma_nn)^{\frac15},\ n^{\frac12}\right\}. 
$$
Indeed, if $\sigma_n^{-1}\gamma_n > n^{3/2}$, we take $\ell = \sqrt n$, and 
$$
  \sigma_n\ell + \frac{\sigma_nn}\ell + \frac{\sigma_n^2\ell^4}{\delta\gamma_n} = \left(2 + \frac{n^{\frac32}}\delta\frac{\sigma_n}{\gamma_n}\right)\sigma_n\sqrt n < \left(2 + \frac1\delta\right)\sigma_n\sqrt n. 
$$
On the other hand, if $\sigma_n^{-1}\gamma_n \le n^{3/2}$, we take $\ell = (\sigma_n^{-1}\gamma_nn)^{1/5}$, and 
\begin{align*}
  \sigma_n\ell + \frac{\sigma_nn}\ell + \frac{\sigma_n^2\ell^4}{\delta\gamma_n} &= \sigma_n^{\frac65}\gamma_n^{-\frac15}n^{\frac45}\left(\sigma_n^{-\frac25}\gamma_n^{\frac25}n^{-\frac35} + 1 + \frac1\delta\right) \\
  &\le \left(2 + \frac1\delta\right)\sigma_n^{\frac65}\gamma_n^{-\frac15}n^{\frac45}. 
\end{align*}
In consequence, \eqref{main lemma 1}, \eqref{main lemma 2} are in force by defining 
$$
  W_{n,\delta}(g) = W_{n,\ell}^{(1)}(g) + W_{n,\ell}^{(2)}(g) + \frac1\delta W_{n,\ell}^{(3)}(g), 
$$
with $\ell = \ell(n)$ chosen above. 
\end{proof}

Before proceeding to the proof of Lemma \ref{lem:subgaussian}, we discuss the anharmonic case briefly. 
If $\sigma_n \equiv \sigma$ and $\gamma_n = o(n)$, similar argument yields the estimate with $\kappa_n$ replaced by $\kappa'_n = n^{3/5+}$. 
Apparently, it is insufficient for deriving the macroscopic fluctuation, which demands at least $\kappa'_n = o(\sqrt n)$. 
By computing explicitly under Gaussian canonical measure, the upper bounds presented for the first and second auxiliary functionals in the proof of Lemma \ref{lem:main lemma} turn out to be sharp. 
Meanwhile, \eqref{main lemma estimate 3} should be improvable. 
Indeed, by using \eqref{entropy inequality 3}, the left-hand side of \eqref{main lemma estimate 3} is bounded from above with 
$$
  2H(f; \mu_n) + \frac2\ell\sum_{i\in\bT_n} \log\int \exp\bigg\{\frac{\ell^2(g_i^n)^2}{n\gamma_n}\sum_{j=0}^{\ell-2}\big(\cY_{i+j}\psi_{i,\ell}^n\big)^2\bigg\}d\mu_n, 
$$
Therefore, we guess that a nice upper bound of the exponential moment term above could help us take the advantage of the entropy and improve \eqref{main lemma estimate 3}. 

Lemma \ref{lem:subgaussian} is a special case of the next result. 

\begin{lem}
\label{lem:general subgaussian}
Let $V \in C(\bR)$ satisfy $c_-r^2 \le 2V(r) \le c_+r^2$ with two positive constants $c_\pm$. 
For $\tau \in \bR$, let $\pi_\tau$ be a probability measure on $\bR$ given by 
$$
  \pi_\tau = e^{-V(r) + \tau r - G(r)}dr, \quad G(r) = \log\int_\bR e^{-V(r) + \tau(r)}dr. 
$$
If $F$ is a measurable function on $\bR$ such that $|F(r)| \le c|r|$ with constant $c$, then $F - E_{\pi_\tau} [F]$ is sub-Gaussian of order $C = C(\tau, c, c_\pm)$ under $\pi_\tau$. 
Furthermore, $C$ is uniformly bounded for all the coefficients in any compact intervals. 
\end{lem}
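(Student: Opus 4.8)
The plan is to reduce the sub-Gaussian estimate to two ingredients: a Gaussian-type upper bound on the density of $\pi_\tau$, and the elementary fact that a centred random variable with Gaussian tails is sub-Gaussian of a controlled order. First I would observe that the hypothesis $c_- r^2 \le 2V(r) \le c_+ r^2$ gives, after completing the square in the exponent $-V(r) + \tau r$, a two-sided comparison
\begin{equation*}
  C_-(\tau)\, e^{-\frac{c_+}{2}(r - \tau/c_-)^2} \le \frac{d\pi_\tau}{dr}(r) \le C_+(\tau)\, e^{-\frac{c_-}{2}(r - \tau/c_+)^2},
\end{equation*}
where $C_\pm(\tau)$ are positive constants depending only on $\tau$, $c_\pm$, and one checks directly (e.g.\ by integrating the right-hand side) that $C_\pm(\tau)$ stay bounded away from $0$ and $\infty$ when $\tau$ ranges over a compact interval. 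In particular there is a Gaussian law $\rho$ (with variance $1/c_-$ and some mean $m(\tau)$, both bounded on compacts) whose density dominates that of $\pi_\tau$ up to the uniform multiplicative constant $C_+(\tau)$.

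Next, since $|F(r)| \le c|r|$, under $\pi_\tau$ the variable $F$ has tails no heavier than those of $c|r|$ under $\pi_\tau$, hence—using the density domination above—no heavier than those of $c|r|$ under $C_+(\tau)\,\rho$. Concretely, for every $s \in \bR$,
\begin{equation*}
  E_{\pi_\tau}\!\big[e^{sF}\big] \le E_{\pi_\tau}\!\big[e^{s c |r|}\big] \le C_+(\tau)\, E_\rho\!\big[e^{s c |r|}\big] \le C_+(\tau)\, e^{|s| c |m(\tau)|}\, E_{\rho_0}\!\big[e^{|s| c |r|}\big],
\end{equation*}
where $\rho_0$ is the centred Gaussian of variance $1/c_-$. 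The last expectation is the standard Gaussian absolute exponential moment, bounded by $2 e^{s^2 c^2/(2 c_-)}$. This yields $\log E_{\pi_\tau}[e^{sF}] \le a|s| + b s^2$ with $a = a(\tau, c, c_\pm)$, $b = b(c, c_-)$, all uniform on compacts.

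Finally I would pass from this crude bound to the genuine sub-Gaussian inequality for the centred variable $\tilde F = F - E_{\pi_\tau}[F]$. The cleanest route is to note that a centred variable satisfying $\log E[e^{sF}] \le a|s| + bs^2$ for all $s$ is automatically sub-Gaussian: subtract the mean (which satisfies $|E_{\pi_\tau}[F]| \le a'$, again uniform on compacts, by the same density comparison applied to $c|r|$), and then either invoke the standard equivalence between Gaussian-type tail decay $\bP(|\tilde F| > t) \le 2 e^{-t^2/C}$ (read off from the Chernoff bound $a|s| + bs^2 - st$ optimized in $s$) and the moment-generating-function bound $\log E[e^{s\tilde F}] \le C s^2/2$, or argue directly by splitting into $|s|$ small (where the $bs^2$ term dominates) and $|s|$ large (where the exponential moment is finite because $F$ has a genuine Gaussian tail, not merely a sub-exponential one). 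The uniformity claim follows since every constant produced above—$C_\pm(\tau)$, $m(\tau)$, $E_{\pi_\tau}[F]$—depends continuously on $(\tau, c, c_\pm)$ and hence is bounded on compacts.

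The only mildly delicate point is the passage in the last paragraph from the one-sided estimate $\log E[e^{sF}] \le a|s| + bs^2$ to a clean $C s^2/2$ bound for the centred variable: a pure sub-exponential input would not suffice, so it is essential that the density domination delivers a true \emph{Gaussian} (not merely exponential) tail for $F$, which is exactly what the quadratic lower bound $c_- r^2 \le 2V(r)$ guarantees. Everything else is bookkeeping with Gaussian integrals.
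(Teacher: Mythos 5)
Your argument is correct, and it reaches the same conclusion by a recognizably different route through the final reduction. The paper never touches the moment generating function of $F$: it bounds the exponential \emph{square} moment $E_{\pi_\tau}[\exp(tF^2)]$ directly for $t < c_-/(2c^2)$, using $2V \ge c_- r^2$ inside the integral and $2V \le c_+ r^2$ only to lower-bound the normalizer $e^{G(\tau)}$; it then centres by the one-line convexity inequality $E_{\pi_\tau}[\exp(tF_*^2)] \le \exp(2tE_{\pi_\tau}[F]^2)\,E_{\pi_\tau}[\exp(2tF^2)] \le E_{\pi_\tau}[\exp(4tF^2)]$ and invokes the $\phi_2$-criterion (Lemma \ref{lem:orlicz} in the appendix). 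Your route instead establishes $\log E_{\pi_\tau}[e^{sF}] \le a|s| + bs^2$ via a two-sided Gaussian density comparison and then has to launder out the linear term $a|s|$ after centring — which is exactly the step you flag as delicate, and which, if you resolve it via ``Gaussian tail decay $\Rightarrow$ finite exponential square moment $\Rightarrow$ sub-Gaussian,'' lands you back at the paper's $\phi_2$-lemma anyway. What the paper's approach buys is that centring under the square-exponential functional is trivial by convexity, so no linear term ever appears; what yours buys is that the MGF bound and the explicit constants $C_\pm(\tau)$, $m(\tau)$ make the uniformity on compacts completely transparent. (Minor slip: in your upper bound for the density the completed square should be centred at $\tau/c_-$, not $\tau/c_+$; this does not affect anything.)
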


\begin{proof}
Notice that for all $\tau \in \bR$, 
$$
  e^{G(\tau)} \ge \int_\bR \exp\left\{-\frac{c_+r^2}2 + \tau r\right\} = \frac{\sqrt{2\pi}}{2c_+}\exp\left\{\frac{\tau^2}{2c_+}\right\}. 
$$
For any $t$ such that $0 < t < c_-/(2c^2)$, 
\begin{align*}
  E_{\pi_\tau} \big[\exp(tF^2)\big] &\le e^{-G(\tau)}\int_\bR \exp\left\{-\frac{(c_- - 2tc^2)r^2}2 + \tau r\right\}dr \\
  &\le \frac{c_+}{c_- - 2tc^2}\exp\left\{\frac{\tau^2}2\left(\frac1{c_- - 2tc^2} - \frac1{c_+}\right)\right\}. 
\end{align*}
Denote $F_* = F - E_{\pi_\tau} [F]$. 
By convexity, for all $t \ge 0$, 
$$
  E_{\pi_\tau} \big[\exp(tF_*^2)\big] \le \exp\big(2tE_{\pi_\tau}^2 [F]\big)E_{\pi_\tau} \big[\exp(2tF^2)\big] \le E_{\pi_\tau} \big[\exp(4tF^2)\big]. 
$$
Therefore, we obtain that 
$$
  E_{\pi_\tau} \left[\exp\left(\frac{c_-}{16c^2}F_*^2\right)\right] \le E_{\pi_\tau} \left[\exp\left(\frac{c_-}{4c^2}F^2\right)\right] \le \frac{2c_+}{c_-}\exp\left\{\frac{\tau^2}{2}\left(\frac2{c_-} - \frac1{c_+}\right)\right\}. 
$$
Using the $\phi_2$-condition (see Lemma \ref{lem:orlicz}), we can conclude that $F_*$ is a sub-Gaussian random variable of the order given by 
$$
  C(\tau, c, c_\pm) = \frac{64cc_+}{c_-^2}\exp\left\{\frac{\tau^2}{2}\left(\frac2{c_-} - \frac1{c_+}\right)\right\}. 
$$
The lemma then follows directly. 
\end{proof}

\section{Entropy estimate}
\label{sec:relative entropy}

In this section we prove Theorem \ref{thm:relative entropy} and Corollary \ref{cor:quantitative hl}. 
They are direct results of Lemma \ref{lem:main lemma} and the relative entropy inequality established in \cite{Yau91}. 

\begin{proof}[Proof of Theorem \ref{thm:relative entropy}]
Recall that $(\fp_i^n, \fr_i^n) = (\fp_n, \fr_n)(t, i/n)$ and $\bst_i^n = \bst_n(\fr_i^n)$. 
We start from Yau's entropy inequality stated in Appendix \ref{appendix:yau}: 
\begin{equation}
\label{yau 1}
  H'_n(t) \le -2n\gamma_nD\left(\sqrt{f_{t,n}}; \mu_{t,n}\right) + \beta\int f_{t,n}J_t^nd\mu_{t,n} + C\gamma_n, 
\end{equation}
where $C = C_{\beta,\fv,T}$. 
The remainder $J_t^n$ can be expressed by 
\begin{equation}
\label{yau 2}
  J_t^n = W_n(h_n) + E_t^n, \quad h_n = \partial_t\fr_n(t, \cdot), 
\end{equation}
where the functional $W_n$ is defined through \eqref{spatial variation}, and 
\begin{equation}
\label{yau 3}
  E_t^n = \sum_{i\in\bT_n} \epsilon_i^n \cdot \begin{pmatrix}p_i - \fp_i^n \\ V'_n(r_i) - \bst_i^n\end{pmatrix}, \quad \epsilon_i^n = -\partial_t\begin{pmatrix}\fp_n \\ \fr_n\end{pmatrix}\left(t, \frac in\right) + n\begin{pmatrix}\bst_{i+1}^n - \bst_i^n \\ \fp_i^n - \fp_{i-1}^n\end{pmatrix}. 
\end{equation}
For the integral of $E_t^n$, \eqref{entropy inequality 3} yields that 
$$
  \int f_{t,n}E_t^nd\mu_{t,n} \le H_n(t) + \sum_{i\in\bT_n} \log\int \exp\left\{\epsilon_i^n \cdot \begin{pmatrix}p_i - \fp_i^n \\ V'_n(r_i) - \bst_i^n\end{pmatrix}\right\}d\mu_{t,n}. 
$$
Notice that under $\mu_{t,n}$, $p_i - \fp_i^n$ is a Gaussian variable, while due to Lemma \ref{lem:subgaussian}, $V'_n(r_i) - \bst_i^n$ is sub-Gaussian of order $C = C_{\beta,\fv,T}$, so that 
\begin{equation}
\label{entropy estimate 1}
  \int f_{t,n}E_t^nd\mu_{t,n} \le H_n(t) + C\sum_{i\in\bT_n}|\epsilon_i^n|^2 \le H_n(t) + \frac{C'_{\beta,\fv,T}}n. 
\end{equation}
For $W_n(h_n)$, denote by $\mu_{t,n}^*$ the marginal distribution of $\mu_{t,n}$ on positions $(r_1, \ldots, r_n)$, and by $f_{t,n}^*$ the density of $(r_1, \ldots, r_n)(t)$ with respect to $\mu_{t,n}^*$. 
Applying Lemma \ref{lem:main lemma} with $\delta = 2/\beta$, and using the relation $H(f_{t,n}^*; \mu_{t,n}^*) \le H(f_{t,n}; \mu_{t,n})$ (see \eqref{entropy inequality 0}), 
\begin{align*}
  \int f_{t,n}W_n(h_n)d\mu_{t,n}\le\ &\frac{2n\gamma_n}\beta D\left(\sqrt{f_{t,n}}; \mu_{t,n}\right) + C\big(H(f_{t,n}^*; \mu_{t,n}^*) + \kappa_n\big) \\
  \le\ &\frac{2n\gamma_n}\beta D\left(\sqrt{f_{t,n}}; \mu_{t,n}\right) + C\big(H_n(t) + \kappa_n\big). 
\end{align*}
Hence, we obtain from \eqref{yau 1} that for all $t \in [0, T]$, 
$$
  H'_n(t) \le C_{\beta,\fv,T}\big(H_n(t) + \max\{\kappa_n, \gamma_n\}\big) = C_{\beta,\fv,T}(H_n(t) + K_n). 
$$
Theorem \ref{thm:relative entropy} then follows from the Gr\"onwall's inequality. 
\end{proof}

Corollary \ref{cor:quantitative hl} is a special case of the following result. 

\begin{cor}
\label{cor:general quantitative hl}
Let $F \in C(\bR)$ satisfy that $|F(r)| \le c|r|$. 
For all $p \in [1, 2)$, there is a constant $C = C(\beta, \fv, T, c, p)$, such that for all $h \in C(\bT)$, $t \in [0, T]$, 
$$
  \bE_n\left[\bigg|\frac1n\sum_{i\in\bT_n} h\left(\frac in\right)\big(F(r_i) - E_{\mu_{t,n}} [F(r_i)]\big)\bigg|^p\right] \le \frac{C(1 + H_n(t))^{\frac p2}\|h\|^p}{n^{\frac p2}}, 
$$
In particular, if \eqref{parameter 2} holds and $H_n(0) \le C_0\sqrt n$, then 
$$
  \bE_n \left[\bigg|\frac1n\sum_{i\in\bT_n} h\left(\frac in\right)\big(F(r_i) - E_{\mu_{t,n}} [F(r_i)] \big)\bigg|^p\right] \le \frac{C\|h\|^p}{n^{\frac p4}}, 
$$
with some constant $C = C(\beta, \fv, T, c, p, C_0)$. 
Similar result holds for $F(p_i)$. 
\end{cor}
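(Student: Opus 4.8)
The plan is to turn, via Jensen's inequality and the entropy inequality \eqref{entropy inequality 3}, the bound under the dynamics $\bP_n$ into a static computation under the product measure $\mu_{t,n}$, where the summands become independent and sub-Gaussian. Set
$$
  X_n := \frac1n\sum_{i\in\bT_n} h(i/n)\big(F(r_i) - E_{\mu_{t,n}}[F(r_i)]\big),
$$
so that the quantity to estimate is $\bE_n[|X_n|^p]$. Since $p \le 2$, concavity of $y \mapsto y^{p/2}$ gives $\bE_n[|X_n|^p] = \bE_n\big[(X_n^2)^{p/2}\big] \le (\bE_n[X_n^2])^{p/2}$; this is the step that produces the exponent $p/2$ on the entropy in the final bound. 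It therefore suffices to prove $\bE_n[X_n^2] \le Cn^{-1}(1 + H_n(t))\|h\|^2$. Writing $\bE_n[X_n^2] = E_{\mu_{t,n}}[f_{t,n}X_n^2]$ and applying the entropy inequality with a parameter $\lambda > 0$,
$$
  \bE_n[X_n^2] \le \frac1\lambda\Big(H_n(t) + \log E_{\mu_{t,n}}\big[e^{\lambda X_n^2}\big]\Big).
$$

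It remains to control the exponential moment under $\mu_{t,n}$. Under $\mu_{t,n}$ the coordinates $r_i$ are independent, with $r_i \sim \pi_{\bst_i^n,\sigma_n}$; since $2V_n$ is squeezed between $(1-\sigma_n)r^2$ and $(1+\sigma_n)r^2$ and the tensions $\bst_i^n = \bst_n(\fr_i^n)$ stay in a fixed compact interval (because $(\fp_n,\fr_n)$ is smooth, hence bounded, on $[0,T]$ for $n$ large by Proposition \ref{prop:smooth regime}, while $|\bst_n(r) - r| \le C\sigma_n$ by \eqref{asymptotic tension}), Lemma \ref{lem:general subgaussian} shows that each $F(r_i) - E_{\mu_{t,n}}[F(r_i)]$ is centred and sub-Gaussian of a single order $C_0 = C_0(\beta,\fv,T,c)$. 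Consequently $X_n = \sum_i (h(i/n)/n)\,(F(r_i) - E_{\mu_{t,n}}[F(r_i)])$, being a weighted sum of independent centred sub-Gaussian variables, is itself sub-Gaussian of order $s_n^2 = C_0 n^{-2}\sum_i h(i/n)^2 \le Cn^{-1}\|h\|^2$ (a Riemann-sum bound). The standard estimate $E_{\mu_{t,n}}[e^{\lambda X_n^2}] \le (1 - 2\lambda s_n^2)^{-1/2}$ for $\lambda < (2s_n^2)^{-1}$, applied with $\lambda = (4s_n^2)^{-1}$, gives $\log E_{\mu_{t,n}}[e^{\lambda X_n^2}] \le \tfrac12\log 2$, whence
$$
  \bE_n[X_n^2] \le 4s_n^2\Big(H_n(t) + \tfrac12\log 2\Big) \le \frac{C(1 + H_n(t))\|h\|^2}{n},
$$
which together with the Jensen step yields the first inequality. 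The statement for $F(p_i)$ is obtained in the same way, even more directly, since $p_i$ is Gaussian with variance $\beta^{-1}$ and bounded mean $\fp_i^n$ under $\mu_{t,n}$.

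For the second bound I feed in Theorem \ref{thm:relative entropy}: under \eqref{parameter 2} one has $K_n = o(\sqrt n)$, so $H_n(0) \le C_0\sqrt n$ forces $H_n(t) \le C_{\beta,\fv,T}(H_n(0) + K_n) \le C\sqrt n$ uniformly on $[0,T]$; then $(1 + H_n(t))^{p/2} \le Cn^{p/4}$ and the first bound becomes $C\|h\|^p n^{-p/4}$. Finally, Corollary \ref{cor:quantitative hl} is recovered by applying this componentwise with $F(x) = x$: one has $E_{\mu_{t,n}}[p_i] = \fp_i^n$ and $E_{\mu_{t,n}}[r_i] = \bar r_n(\bst_n(\fr_i^n)) = \fr_i^n$ by the Legendre duality between $G_n$ and $F_n$ in \eqref{gibbs potential and free energy}--\eqref{convex conjugate}, and one combines the two components via $|a+b|^p \le 2^{p-1}(|a|^p + |b|^p)$, valid since $p \ge 1$, together with $\|h_1\|,\|h_2\| \le \|h\|$.

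The computation is essentially routine given the preceding results; the only point needing attention is the uniformity (over $n$, $i$ and $t$) of the sub-Gaussian order $C_0$, which is precisely what the smoothness of $(\fp_n,\fr_n)$ on $[0,T]$ guarantees, by confining all the tensions $\bst_i^n$ to one compact interval so that the ``coefficients in compact intervals'' hypothesis of Lemma \ref{lem:general subgaussian} is met.
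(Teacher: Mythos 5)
Your proof is correct, but it follows a genuinely different route from the paper's. The paper applies the entropy inequality \eqref{entropy inequality 1} to the \emph{event} $\{|X_n|>\lambda\}$, uses the sub-Gaussian tail of $\sum_i h_i^n F_i$ under $\mu_{t,n}$ to make the denominator of order $\lambda^2 n/\|h\|^2$, and then converts the resulting tail bound $\bP_n\{|X_n|>\lambda\}\le C(1+H_n(t))\|h\|^2\lambda^{-2}n^{-1}$ into a $p$-th moment via Lemma \ref{lem:moment}; that last conversion is exactly why the paper must restrict to $p<2$. You instead apply the exponential-moment entropy inequality \eqref{entropy inequality 2} directly to $X_n^2$, control $E_{\mu_{t,n}}[e^{\lambda X_n^2}]$ by the $\phi_2$-type bound for a sub-Gaussian variable of order $s_n^2\sim C\|h\|^2/n$ with $\lambda=(4s_n^2)^{-1}$, and finish with Jensen. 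This is cleaner and actually buys the endpoint: your argument proves the first inequality for all $p\in[1,2]$, including $p=2$, which the paper's tail-to-moment step cannot reach. Both proofs rest on the same input, namely Lemma \ref{lem:general subgaussian} and the uniform boundedness of the tensions $\bst_i^n$, and both commit the same harmless abuse in replacing $n^{-1}\sum_i h(i/n)^2$ by $C\|h\|^2$. Two cosmetic corrections: for a sub-Gaussian (not exactly Gaussian) variable the bound $E[e^{\lambda X^2}]\le(1-2\lambda s_n^2)^{-1/2}$ is not the right form --- use the paper's own estimate $E[e^{tX^2}]\le(1+2ts_n^2)/(1-2ts_n^2)$ from the proof of Lemma \ref{lem:subgaussian absolute}, which gives $\log 3$ instead of $\tfrac12\log 2$ at $t=(4s_n^2)^{-1}$; and to apply \eqref{entropy inequality 2} to the unbounded $X_n^2$ one should truncate and pass to the limit, as the paper's ``standard approximating argument'' indicates. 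Neither affects the conclusion.
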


\begin{proof}
Denote by $F_i = F(r_i) - E_{\mu_{t,n}} [F(r_i)]$ for short. 
By \eqref{entropy inequality 1}, 
$$
  \bP_n \left\{\left|\frac1n\sum_{i\in\bT_n} h_i^nF_i\right| > \lambda\right\} \le \frac{H_n(t) + \log2}{-\log\mu_{t,n}\big\{|\sum_i h_i^nF_i| > \lambda n\big\}}. 
$$
In view of Lemma \ref{lem:general subgaussian}, $\{F_i, i \in \bT_n\}$ is an independent family of sub-Gaussian variables of a uniform order under $\mu_{t,n}$. 
Then, with a constant $C = C_{\beta,\fv,T,c}$, 
$$
  E_{\mu_{t,n}} \left[\exp\left\{s\sum_{i\in\bT_n} h_i^nF_i\right\}\right] \le \exp\left\{\frac{Cs^2}2\sum_{i\in\bT_n} (h_i^n)^2\right\}, \quad \forall s \in \bR. 
$$
Therefore, $\sum_i h_i^nF_i$ is sub-Gaussian of order $Cn\|h\|^2$, and 
$$
  \mu_{t,n} \left\{\left|\sum_{i\in\bT_n} h_i^nF_i\right| > \lambda n\right\} \le 2\exp\left\{-\frac{\lambda^2n}{2C\|h\|^2}\right\}. 
$$
From this and the estimate above, we obtain that for any $\lambda > 0$, 
$$
  \bP_n \left\{\left|\frac1n\sum_{i\in\bT_n} h_i^nF_i\right| > \lambda\right\} \le \frac{C(1 + H_n(t))\|h\|^2}{\lambda^2n}. 
$$
By the moment estimate in Lemma \ref{lem:moment}, for all $p \in [1, 2)$, 
$$
  \bE_n \left[\bigg|\frac1n\sum_{i\in\bT_n} h_i^nF_i\bigg|^p\right] \le \frac{C(1 + H_n(t))^{\frac p2}\|h\|^p}{n^{\frac p2}}. 
$$
The second inequality in Corollary \ref{cor:general quantitative hl} follows directly from Theorem \ref{thm:relative entropy}. 
The parallel result for $F(p_i)$ can be proved in the same way. 
\end{proof}

\section{Boltzmann--Gibbs principle}
\label{sec:boltzmann gibbs}
In this section, we prove the proposition which is known in the literature as the \emph{Boltzmann--Gibbs principle}, firstly established for the equilibrium dynamics of zero range jump process in \cite{BR84}. 
It aims at determining the space-time variance of a local observation of conserved field by its linear approximation. 

\begin{prop}
\label{prop:boltzmann gibbs}
Suppose \eqref{parameter 2} and in additional 
\begin{equation}
\label{initial entropy}
  \lim_{n\to\infty} \frac{H_n(0)}{\sqrt n} = 0. 
\end{equation}
Let $g_n = g_n(t, x)$ be a sequence of functions on $[0, T] \times \bT$, such that $|g_n|_\bT$ and $|\partial_xg_n|_\bT$ are uniformly bounded for $t \in [0, T]$ and $n \ge 1$. 
For any $0 \le t < t' \le T$, 
\begin{equation}
  \lim_{n\to\infty} \bP_n \left\{\left|\frac1{\sqrt n} \int_t^{t'} \sum_{i\in\bT_n} g_i^n\Phi_i^nds\right| > \lambda\right\} = 0, \quad \forall \lambda > 0, 
\end{equation}
where $g_i^n = g_n(t, i/n)$ and $\Phi_i^n$ is given by \eqref{local function}. 
\end{prop}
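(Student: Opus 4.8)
The plan is to follow the strategy of \cite{JM18a,JM18b}, feeding into it the two quantitative inputs now available: the main Lemma \ref{lem:main lemma} and the entropy estimate Theorem \ref{thm:relative entropy}. Write $X_n(t,t') = \frac1{\sqrt n}\int_t^{t'}\sum_{i\in\bT_n} g_i^n\Phi_i^n\,ds$. Since $\Phi_i^n$ depends only on the positions $(r_1,\dots,r_n)$, we may work with the position marginals $\mu_{s,n}^*$ and densities $f_{s,n}^*$, using $H(f_{s,n}^*;\mu_{s,n}^*)\le H_n(s)$. The route is a Feynman--Kac / entropy argument: for $\theta>0$ small, a Chebyshev and then Feynman--Kac bound (in the spirit of \cite[Lemma 3.5]{JM18a}) reduces control of $\bP_n\{|X_n|>\lambda\}$ to an estimate on a Rayleigh-quotient type quantity,
$$
  \sup_{f}\left\{\frac{\theta}{\sqrt n}\int_t^{t'}\!\!\int f\Big(\sum_{i}g_i^n\Phi_i^n\Big)\,d\mu_{s,n}^*\,ds - n\gamma_n\int_t^{t'}\!D(\sqrt f;\mu_{s,n}^*)\,ds\right\},
$$
where the supremum runs over densities $f$ (this is where the $n\gamma_n$ Dirichlet factor comes from: the generator of the position dynamics acting on position-dependent functions is essentially $n\gamma_n\cS_n$, up to a drift term that is harmless after a change of measure or absorbed into the entropy). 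The factor $1/\sqrt n$ in front of the space sum is the right normalization because $\Phi_i^n$ has zero mean under the inhomogeneous canonical measure and the $\Phi_i^n$ behave like a sum of weakly correlated $O(1)$ variables.

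Now apply Lemma \ref{lem:main lemma} with $\delta = \delta_n\to0$ chosen later: for each fixed $s$,
$$
  \int f\,W_n(g_n(s,\cdot))\,d\mu_{s,n}^* \le \delta_n n\gamma_n D(\sqrt f;\mu_{s,n}^*) + C(1+M_{g_n})\Big[H(f;\mu_{s,n}^*) + (1+\delta_n^{-1})\kappa_n\Big].
$$
Multiplying by $\theta/\sqrt n$, integrating in $s$ over $[t,t']$, and using $\int_t^{t'}H(f_{s,n}^*;\mu_{s,n}^*)\,ds \le \int_0^T H_n(s)\,ds \le CT(H_n(0)+K_n)$ from Theorem \ref{thm:relative entropy}, the $\delta_n n\gamma_n D$ term is absorbed into the $-n\gamma_n\int D$ term of the Rayleigh quotient provided $\theta\delta_n/\sqrt n \le 1$, i.e. for $\theta$ fixed and $n$ large this only requires $\delta_n = o(\sqrt n)$. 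The residual bound is then
$$
  \bP_n\{|X_n|>\lambda\} \le e^{-\theta\lambda\sqrt n \cdot c}\cdot\exp\Big\{\frac{C\theta}{\sqrt n}\big(H_n(0)+K_n + (1+\delta_n^{-1})\kappa_n\big)\Big\}
$$
(schematically; one should symmetrize to handle $\pm X_n$). Under \eqref{parameter 2} we have $K_n=o(\sqrt n)$ and $\kappa_n=o(\sqrt n)$, and under \eqref{initial entropy} we have $H_n(0)=o(\sqrt n)$; choosing $\delta_n\to\infty$ slowly enough that $\delta_n^{-1}\kappa_n = o(\sqrt n)$ still holds (e.g. $\delta_n = \kappa_n/\sqrt n \cdot \log n$ or simply $\delta_n$ a slowly growing sequence), the exponent $\frac{C\theta}{\sqrt n}(\cdots)$ tends to $0$, so the right-hand side tends to $0$ for every fixed $\theta>0$ and $\lambda>0$. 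This proves the claim.

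The main obstacle is the Feynman--Kac step itself, i.e. turning the time-integrated expectation of $\sum_i g_i^n\Phi_i^n$ into a supremum over densities against the instantaneous Dirichlet form. Because the reference measures $\mu_{s,n}$ move in time (the profile $(\fp_n,\fr_n)(s,\cdot)$ evolves), one cannot apply the static variational formula directly; the standard fix is to use the time-dependent version of the Feynman--Kac inequality, which produces exactly the $\int H_n(s)\,ds$ correction term together with a term controlling $\partial_s \mu_{s,n}$ — and the latter is bounded because $(\fp_n,\fr_n)$ is smooth on $[0,T]$ by Proposition \ref{prop:smooth regime} and $\bst_n$ has bounded derivatives by \eqref{asymptotic tension}. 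A secondary technical point is the drift part of the position generator ($n\beta\gamma_n/2$ times a discrete Laplacian of $V_n'$, plus the Hamiltonian coupling to momenta): the coupling to momenta is controlled via the entropy inequality and the sub-Gaussianity of $p_i-\fp_i^n$ (Lemma \ref{lem:subgaussian}) as in the proof of Theorem \ref{thm:relative entropy}, and the symmetric drift is exactly the adjoint structure \eqref{adjoint} already built into $\cS_n$, so it contributes no positive term. Once these structural points are in place, the estimate is a bookkeeping exercise with the asymptotics \eqref{parameter 2}.
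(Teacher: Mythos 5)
Your overall plan (path-space entropy inequality plus the generalized Feynman--Kac bound of \cite[Lemma 3.5]{JM18a}, fed with Lemma \ref{lem:main lemma} and Theorem \ref{thm:relative entropy}) is the right one and is the route the paper takes. However, there is a genuine gap at the central step. After writing the Feynman--Kac bound as a supremum over \emph{all} densities $f$, you apply Lemma \ref{lem:main lemma} and then replace the resulting entropy term $H(f;\mu_{s,n}^*)$ by $H(f_{s,n}^*;\mu_{s,n}^*)$ so as to invoke Theorem \ref{thm:relative entropy}. This is not legitimate: inside the variational formula $f$ is a generic density, not the actual one, and $\sup_f H(f;\mu_{s,n}^*)=+\infty$; the variational formula carries no $-H(f)$ term that could absorb it. The paper's proof is architected precisely to avoid this: it splits $W_n(g_n)=\bigl[W_n(g_n)-U_{n,\alpha}(g_n)\bigr]+U_{n,\alpha}(g_n)$, where $U_{n,\alpha}$ contains the auxiliary functional $W_{n,\delta}$ of Lemma \ref{lem:main lemma}. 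Only the first piece enters the Feynman--Kac supremum, and by \eqref{main lemma 1} it is bounded \emph{purely} by the Dirichlet form, with no entropy term, so the supremum is $\le 0$ and one gets $\log\bP_{t,n}\{\cdots>\lambda\sqrt n\}\le-\alpha\lambda\sqrt n$; the prefactor $H_n(t)+\log 2$ then comes from the path-space entropy inequality \eqref{entropy inequality 1}, not from inside the supremum. The second piece $U_{n,\alpha}$ is estimated \emph{outside} the variational formula, against the actual law, by plain Chebyshev together with \eqref{main lemma 2}, and it is only there that $H_n(t)\le C(H_n(0)+K_n)=o(\sqrt n)$ from Theorem \ref{thm:relative entropy} is used. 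Your schematic final bound, a product of two exponentials, cannot be reached along the path you describe.

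A secondary inaccuracy: the correction coming from the time-dependence of $\mu_{s,n}$ and the non-symmetry of $\cL_n$ is the term $J_s^n=E_s^n+W_n(h_n)$ of \eqref{yau 2}--\eqref{yau 3}. The part $E_s^n$ is indeed small via the entropy inequality and sub-Gaussianity, as you say, but $W_n(h_n)$ is a sum of $n$ terms of exactly the same type as $W_n(g_n)$ and is \emph{not} negligible just because the profile is smooth; it must itself be split by Lemma \ref{lem:main lemma}, which is why the paper's $U_{n,\alpha}$ also contains $W_{n,1/\beta}(h_n)$ and why \eqref{main lemma 1} is invoked with two different values of $\delta$ so that the two Dirichlet contributions still sum to at most $n\gamma_n D(\sqrt f;\mu_{s,n})$. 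Your remark that the symmetric drift ``contributes no positive term'' does not address this piece.
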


We prove it along the approach in \cite[Theorem 5.1]{JM18b}. 

\begin{proof}
As we can consider $-g_n$ instead of $g_n$, it suffices to prove that 
$$
  \lim_{n\to\infty} \bP_n \left\{\int_t^{t'} \sum_{i\in\bT_n} g_i^n\Phi_i^nds > \lambda\sqrt n\right\} = 0. 
$$
Recall the auxiliary functional $W_{n,\delta}$ defined in Lemma \ref{lem:main lemma}, and the expressions $E_t^n$, $W_n(h_n)$ in \eqref{yau 2}. 
Define for any $\alpha > 0$ and $n \ge 1$ 
\begin{equation}
  U_{n,\alpha}(g_n) = W_{n,\frac1{2\alpha}}(g_n) + \frac\beta{2\alpha}\big[E_t^n + W_{n,\frac1\beta}(h_n)\big]. 
\end{equation}
Note that the parameter $\alpha$ is not needed here, but would be used in Section \ref{sec:tightness}. 
Let $\bP_{t,n}$ and $\bP_{t,n}^*$ be the law of the dynamics generated by $\cL_n$, respectively with initial distributions $\mu_{t,n}$ and $f_{t,n}d\mu_{t,n}$. 
By the Markov property, 
$$
  H\left(\frac{d\bP_{t,n}^*}{d\bP_{t,n}}; \bP_{t,n}\right) = H_n(f_{t,n}; \mu_{t,n}) = H_n(t). 
$$
Therefore, we can apply \eqref{entropy inequality 1} to the trajectory space to get 
\begin{align*}
  &\bP_n \left\{\int_t^{t'} \big[W_n(g) - U_{n,\alpha}(g)\big]ds > \lambda\sqrt n\right\} \\
  \le\ &\frac{H(t) + \log2}{-\log\bP_{t,n} \{\int_0^{t'-t} [W_n(g) - U_{n,\alpha}(g)]ds > \lambda\sqrt n\}}. 
\end{align*}
Applying \cite[Lemma 3.5]{JM18a} (see also \cite[Lemma A.2]{JM18b}) to the reference measures $\{\mu_{t+s,n}; s \in [0, t' - t]\}$, 
\begin{align*}
  &\log\bP_{t,n} \left\{\int_0^{t'-t} \big[W_n(g_n) - U_{n,\alpha}(g_n)\big]ds > \lambda\sqrt n\right\} \\
  \le &-\alpha\lambda\sqrt n + \int_0^{t'-t} \sup_f \bigg\{-n\gamma_n D\left(\sqrt f; \mu_{t+s,n}\right) + \\
  &\int f\left[\alpha\big(W_n(g_n) - U_{n,\alpha}(g_n)\big) + \frac{\beta J_{t+s}^n}2\right]d\mu_{t+s,n}\bigg\}ds, 
\end{align*}
where the supremum runs over all the density functions $f$ with respect to $\mu_{t+s,n}$. 
Since $J_t^n = E_t^n + W_n(h_n)$, \eqref{main lemma 1} in Lemma \ref{lem:main lemma} yields that 
$$
  \int f\left[\alpha\big(W_n(g_n) - U_{n,\alpha}(g_n)\big) + \frac{\beta J_{t+s}^n}2\right]d\mu_{t+s,n} \le n\gamma_n D\left(\sqrt f; \mu_{t+s,n}\right). 
$$
Hence, we can conclude that for all $\lambda > 0$, 
$$
  \log\bP_{t,n} \left\{\int_0^{t'-t} \big[W_n(g_n) - U_{n,\alpha}(g_n)\big]ds > \lambda\sqrt n\right\} \le -\alpha\lambda\sqrt n. 
$$
As the conditions and Theorem \ref{thm:relative entropy} assure that $H_n(t) = o(\sqrt n)$, 
\begin{equation}
\label{boltzmann gibbs 1}
  \lim_{n\to\infty} \bP_n \left\{\int_t^{t'} \big[W_n(g_n) - U_{n,\alpha}(g_n)\big]ds > \lambda\sqrt n\right\} \le \lim_{n\to\infty} \frac{H_n(t) + \log2}{\alpha\lambda\sqrt n} = 0. 
\end{equation}
For the integral of $U_{n,\alpha}$, notice that by Chebyshev's inequality, 
$$
  \bP_n \left\{\int_t^{t'} U_{n,\alpha}(g_n)ds > \lambda\sqrt n\right\} \le \frac1{\lambda\sqrt n}\bE_n \left[\left|\int_t^{t'} U_{n,\alpha}(g_n)ds\right|\right]. 
$$
By \eqref{main lemma 2} in Lemma \ref{lem:main lemma} and \eqref{entropy estimate 1}, with a constant $C = C_{\beta,\fv,T}$, 
$$
  \bE_n \big[|U_{n,\alpha}(g_n)|\big] \le C\big(\alpha + \alpha^{-1}\big)\big(1 + |g_n(t)|_\bT^2 + |\partial_xg_n(t)|_\bT\big)\big(H_n(t) + K_n\big). 
$$
From the conditions on $g_n$ and Theorem \ref{thm:relative entropy}, 
\begin{equation}
\label{boltzmann gibbs 2}
  \begin{aligned}
    &\lim_{n\to\infty} \bP_n \left\{\int_t^{t'} U_{n,\alpha}(g_n)ds > \lambda\sqrt n\right\} \\
    \le\ &\frac{C|t' - t|(1 + C_g)}\lambda\left(\alpha + \frac1\alpha\right)\lim_{n\to\infty} \frac{H_n(0) + K_n}{\sqrt n} = 0. 
  \end{aligned}
\end{equation}
where $C_g = \sup_{n\ge1,t\in[0,T]} \{|g_n(t)|_\bT^2 + |\partial_xg_n(t)|_\bT\}$. 
By summing up \eqref{boltzmann gibbs 1} and \eqref{boltzmann gibbs 2} together we prove the result. 
\end{proof}

\section{Convergence of finite-dimensional laws}
\label{sec:fdd}

In this section we prove that every possible weak limit point of $Y_t^n$ in \eqref{fluctuation field} satisfies \eqref{homogeneous p}. 
Let $H: [0, T] \times \bT \to \bR^2$ be a smooth function, and write $H = (H_1, H_2)$. 
By It\^o's formula, there is a square integrable martingale $M_t^n(H)$, such that 
$$
  Y_t^n(H(t)) - Y_0^n(H(0)) = \int_0^t \left(\frac d{ds} + \cL_n\right)Y_s^n(H(s))ds + M_t^n(H), 
$$
and the quadratic variation of $M_t^n(H)$ is given by 
\begin{equation}
\label{quadratic variation}
  \big\langle M_t^n(H) \big\rangle = n\gamma_n\int_0^t \Gamma_n \big[Y_t^n(H(s))\big]ds, \quad 
  \Gamma_nf = \cS_n[f^2] - 2f\cS_nf. 
\end{equation}
Recall that $\fu_n = (\fp_n, \fr_n)$ denotes the solution to \eqref{quasi linear p} with $\sigma = \sigma_n$, $(\fp_i^n, \fr_i^n) = \fu_n(s, i/n)$ and $\bst_i^n = \bst_n(\fr_i^n)$. 
We also write $\fb_i^n = \bst'_n(\fr_i^n)$. 
Through direct computation, 
\begin{align*}
  \frac d{ds}Y_s^n(H) =\ &\frac1{\sqrt n}\sum_{i\in\bT_n} \big(\partial_sH_i^n - L_i^nH\big) \cdot \big(\eta_i - \fu_i^n\big) \\
  &- \frac1{\sqrt n}\sum_{i\in\bT_n} H_i^n \cdot \big(\partial_s\fu_i^n - \Lambda_i^n\big) \\
  &+ \frac1{\sqrt n}\sum_{i\in\bT_n} \begin{pmatrix}\nabla_{i-1}^nH_1 \\ \nabla_i^nH_2\end{pmatrix} \cdot \begin{pmatrix}\bst_i^n + \fb_i^n(r_i - \fr_i^n) \\ p_i\end{pmatrix}, \\
  \cA_n[Y_s^n(H)] =\ &n^{-\frac32}\sum_{i\in\bT_n} \begin{pmatrix}\nabla_{i-1}^nH_1 \\ \nabla_i^nH_2\end{pmatrix} \cdot \begin{pmatrix}-V_n'(r_i) \\ -p_i\end{pmatrix}, \\
  \cS_n[Y_s^n(H)] =\ &2^{-1}n^{-\frac52}\sum_{i\in\bT_n} \Delta_i^nH_2V'_n(r_i). 
\end{align*}
Here $\nabla_i^n$ and $\Delta_i^n$ are discrete derivatives given by 
$$
  \nabla_i^nf = n\left[f\left(\frac{i+1}n\right) - f\left(\frac in\right)\right], \quad \Delta_i^nf = n\big(\nabla_i^nf - \nabla_{i-1}^nf\big), 
$$
while the operator $L_i^n = L_i^n(s)$ and approximate field $\Lambda_i^n = \Lambda_i^n(s)$ are 
$$
  L_i^nH = \begin{bmatrix}0 &1 \\ \fb_i^n &0\end{bmatrix}\begin{pmatrix}\nabla_{i-1}^nH_1 \\ \nabla_i^nH_2\end{pmatrix}, \quad \Lambda_i^n = n\begin{pmatrix}\bst_{i+1}^n - \bst_i^n \\ \fp_i^n - \fp_{i-1}^n\end{pmatrix}. 
$$
With the notations above, $Y_t^n(H(t))$ is split into 
\begin{equation}
\label{ito}
  Y_t^n(H(t)) = Y_0^n(H(0)) + \sR_t^n(H) + \sA_t^n(H) + \sS_t^n(H) + \sW_t^n(H) + M_t^n(H), 
\end{equation}
where $\sR_t^n$, $\sA_t^n$, $\sS_t^n$ and $\sW_t^n$ are given respectively by 
\begin{align*}
  &\sR_t^n(H) = \frac1{\sqrt n}\int_0^t \sum_{i\in\bT_n} \left[\partial_sH\left(s, \frac in\right) - L_i^nH(s)\right] \cdot \big(\eta_i - \fu_i^n\big)ds, \\
  &\sA_t^n(H) = \frac1{\sqrt n}\int_0^t \sum_{i\in\bT_n} H\left(s, \frac in\right) \cdot \left[-\partial_s\fu_n\left(s, \frac in\right) + \Lambda_i^n\right]ds, \\
  &\sS_t^n(H) = \frac{\gamma_n}{2\sqrt n}\frac1n\int_0^t \sum_{i\in\bT_n} \Delta_i^nH_2(s)V'_n(r_i)ds, \\
  &\sW_t^n(H) = \frac1{\sqrt n}\int_0^t \sum_{i\in\bT_n} \nabla_{i-1}^nH_1(s)\big[-V'_n(r_i) + \bst_i^n + \fb_i^n(r_i - \fr_i^n)\big]ds. 
\end{align*}
The finite-dimensional convergence is stated below. 

\begin{prop}
Assume \eqref{parameter 2} and \eqref{initial entropy}. 
Define $h_n = h_n(t, x)$ be the solution to the following adjoint equation on $(t, x) \in [0, T] \times \bT$: 
\begin{equation}
\label{backward 1}
  \partial_th_n - \begin{bmatrix}0 &1 \\ \bst'_n(\fr_n) &0\end{bmatrix}\partial_xh_n = 0, \quad h_n(0, \cdot) = H, 
\end{equation}
with some fixed initial condition $H \in C^{\infty}(\bT)$. 
For any $\lambda > 0$, 
$$
  \lim_{n\to\infty} \bP_n\left\{\sup_{t\in[0,T]} |Y_t^n(h_n(t)) - Y_0^n(H)| > \lambda\right\} = 0, \quad \forall \lambda > 0. 
$$
\end{prop}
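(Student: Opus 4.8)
The plan is to use the It\^o decomposition \eqref{ito} with the test function $H$ replaced by the solution $h_n(t,\cdot)$ of the adjoint equation \eqref{backward 1}, and show that every term on the right-hand side except $Y_0^n(H)$ vanishes uniformly in $t$, in probability. First I would observe that the choice of $h_n$ is engineered precisely so that the drift term $\sR_t^n(h_n)$ vanishes identically: the equation \eqref{backward 1} is the backward equation dual to the discrete operator $L_i^n$ (up to the mismatch between $\bst'_n(\fr_n)$ evaluated at the continuum point and $\fb_i^n$ at the lattice point $i/n$), so $\partial_s h_n(s,i/n) - L_i^n h_n(s)$ is $O(n^{-1})$ in supremum norm, smallness that kills the $n^{-1/2}\sum_i$ prefactor after integrating in time. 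This uses that $(\fp_n,\fr_n)$ is smooth on $[0,T]$ by Proposition \ref{prop:smooth regime} together with the coefficient bounds \eqref{asymptotic tension}, so $h_n$ and its derivatives are controlled uniformly in $n$ by standard hyperbolic theory.

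Next I would dispatch the deterministic correction term $\sA_t^n(h_n)$, which measures how well the discrete currents $\Lambda_i^n$ approximate $\partial_s\fu_n$; since $(\fp_n,\fr_n)$ solves the quasilinear $p$-system \eqref{quasi linear p} exactly, the difference $-\partial_s\fu_n(s,i/n) + \Lambda_i^n$ is again $O(n^{-1})$ by Taylor expansion of the smooth profile, so $\sA_t^n(h_n)$ is $O(n^{-1/2})$ uniformly in $t$. The noise-generated term $\sS_t^n(h_n)$ carries a prefactor $\gamma_n n^{-3/2}$ in front of a sum of $n$ bounded-in-expectation terms $\Delta_i^n h_{n,2} V_n'(r_i)$; using $\bE_n|V_n'(r_i)| \le C(1 + H_n(t))$-type control via the entropy inequality (or more simply Corollary \ref{cor:general quantitative hl}), this term is $O(\gamma_n n^{-1/2})$, which vanishes under the first condition in \eqref{parameter 2}. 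For the martingale $M_t^n(h_n)$ I would compute its quadratic variation from \eqref{quadratic variation}: $\Gamma_n[Y_s^n(h_n)]$ is supported on the $\cY_i$-derivatives of $Y_s^n$, which only involve $h_{n,2}$ and produce a factor $n^{-1}$ from the spatial increments, so $\langle M_t^n(h_n)\rangle = n\gamma_n \cdot O(n^{-2}) \cdot t$ in expectation, i.e.\ $O(\gamma_n n^{-1})$; then Doob's maximal inequality gives the uniform-in-$t$ bound. The remaining term $\sW_t^n(h_n)$ is exactly $n^{-1/2}\int_0^t \sum_i \nabla_{i-1}^n h_{n,1}(s)\,\big(-\Phi_i^n\big)\,ds$ up to sign, so it is governed by the Boltzmann--Gibbs principle, Proposition \ref{prop:boltzmann gibbs}, applied with $g_n(s,x) = -\partial_x h_{n,1}(s,x)$ (whose sup-norm and spatial-derivative sup-norm are uniformly bounded by the smoothness of $h_n$): this forces $\sW_t^n(h_n) \to 0$ in probability.

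The main obstacle is passing from convergence at each fixed $t$ to the uniform-in-$t$ statement. The Boltzmann--Gibbs principle as stated only gives, for fixed $0 \le t < t' \le T$, that $n^{-1/2}\int_t^{t'}\sum_i g_i^n\Phi_i^n\,ds$ is small; to upgrade to $\sup_{t}$ I would partition $[0,T]$ into $K$ subintervals, apply the principle on each, and then control the oscillation of $\sW_\cdot^n(h_n)$ within a subinterval of length $T/K$ by a crude bound (e.g.\ using $\bE_n|\Phi_i^n| \lesssim \sigma_n \cdot O(1)$ from the subGaussian Lemma \ref{lem:subgaussian} and the fact that $\Phi_i^n = \sigma_n\phi_i^n$, giving an a priori $O(\sqrt n\,\sigma_n \cdot T/K)$ contribution — wait, that is too crude; more carefully one uses the entropy bound together with the subGaussian concentration of $\sum_i g_i^n\Phi_i^n$ under $\mu_{t,n}$, as in Corollary \ref{cor:general quantitative hl}, to get an oscillation bound of order $(T/K)\sqrt{1+H_n(t)}\,\|g\|/\sqrt n$), then send $n\to\infty$ with $K$ fixed and finally $K\to\infty$. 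The same partition-and-oscillation scheme simultaneously upgrades the other terms, which are already bounded in supremum norm by deterministic $o(1)$ quantities (for $\sR^n,\sA^n,\sS^n$) or handled directly by Doob (for $M^n$), so no extra work is needed there. I expect the bookkeeping of which error is $O(n^{-1})$ versus $O(\gamma_n n^{-1/2})$ versus controlled only through entropy to be the delicate part, but each individual estimate is routine given the tools already assembled in Sections \ref{sec:lemma}--\ref{sec:boltzmann gibbs}.
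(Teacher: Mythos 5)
Your decomposition and term-by-term treatment coincide with the paper's own proof: the same It\^o splitting \eqref{ito} with $H=h_n$, the observation that \eqref{backward 1} makes $\partial_sh_n(s,i/n)-L_i^nh_n(s)=O(n^{-1})$ so that $\sR^n$ vanishes (the paper sharpens the rate to $O(n^{-3/4})$ via Corollary \ref{cor:quantitative hl}, but your cruder bound suffices for the qualitative statement), the deterministic $O(n^{-1/2})$ bound for $\sA^n$, the $O(\gamma_nn^{-1/2})$ bound for $\sS^n$ after centring $V_n'(r_i)$, Doob's inequality plus the quadratic variation \eqref{quadratic variation} for $M^n$, and the identification $\sW_t^n(h_n)=-n^{-1/2}\int_0^t\sum_i\nabla_{i-1}^nh_{n,1}\,\Phi_i^n\,ds$ handled by Proposition \ref{prop:boltzmann gibbs}. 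All of these steps are sound and match the paper.

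The one place you depart from the paper is the partition-and-oscillation scheme for upgrading the Boltzmann--Gibbs estimate from fixed endpoints to $\sup_{t\in[0,T]}$, and that step does not close as written. The per-interval oscillation is controlled by $\bE_n\bigl|n^{-1/2}\sum_ig_i^n\Phi_i^n\bigr|=\sigma_n\sqrt n\,\bE_n\bigl|n^{-1}\sum_ig_i^n\phi_i^n\bigr|\lesssim\sigma_n\sqrt{1+H_n(s)}\lesssim\sigma_n\sqrt{K_n}$; your displayed bound $(T/K)\sqrt{1+H_n(t)}\,\|g\|/\sqrt n$ has dropped the factor $\sigma_n\sqrt n$ coming from the $n^{-1/2}$ (rather than $n^{-1}$) normalization. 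Under \eqref{parameter 2} alone, $\sigma_n\sqrt{K_n}$ need not stay bounded as $n\to\infty$ (one only gets $\sigma_n=o(n^{-1/6})$ and $K_n=o(\sqrt n)$), so summing the $K$ per-interval tail bounds leaves a quantity of order $T\sigma_n\sqrt{K_n}/\lambda$ that does not vanish, and the $\lim_K\limsup_n$ argument fails. The resolution is that no partition is needed: the Feynman--Kac inequality of \cite{JM18a} invoked in the proof of Proposition \ref{prop:boltzmann gibbs} is an exponential-martingale/Doob-type bound that controls $\sup_{t\le T}\int_0^t(\cdots)\,ds$ directly, so the conclusion of the Boltzmann--Gibbs principle already holds with the supremum over $t$ inside the probability (the paper states Proposition \ref{prop:boltzmann gibbs} without the supremum but uses it in this stronger form for $\sW^n$). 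Replacing your partition argument by this observation repairs the proof.
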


\begin{proof}
We investigate each term in \eqref{ito} respectively. 
The martingale term $M_t^n$ is the easiest. 
From \eqref{quadratic variation}, for all $t \in [0, T]$, 
\begin{equation}
\label{fdd m}
  \bE_n \big[|M_t^n(h_n)|^2\big] = \gamma_n\int_0^t \sum_{i\in\bT_n} \frac1{n^2}\big(\nabla_i^n h_n\big)^2ds \le \frac{\gamma_nt}n\sup_{s\in[0,t]} |\partial_xh_n(s)|_\bT^2. 
\end{equation}
From Doob's inequality, 
$$
  \bE_n \left[\sup_{t\in[0,T]} |M_t^n(h_n)|^2\right] \le 4\bE_n \big[|M_T^n(h_n)|^2\big] \le \frac{C\gamma_n}n, 
$$
which vanishes as $n \to \infty$. 
For the integral $\sR_t^n$, by \eqref{backward 1}, 
$$
  \left|\partial_sh_n\left(s, \frac in\right) - L_i^nh_n(s)\right| \le \frac{C|\partial_x^2h_n(s)|_\bT}n. 
$$
Using this estimate and Corollary \ref{cor:quantitative hl}, for all $p \in [1, 2)$, 
\begin{equation}
\label{fdd r}
  \bE_n \left[\bigg|\frac1{\sqrt n}\sum_{i\in\bT_n} \left[\partial_sh_n\left(s, \frac in\right) - L_i^nh_n(s)\right] \cdot \begin{pmatrix}p_i - \fp_i^n \\ r_i - \fr_i^n\end{pmatrix}\bigg|^p\right] \le \frac{C_p|\partial_x^2h_n(s)|_\bT^p}{n^{p(1+\frac14-\frac12)}}. 
\end{equation}
Taking $p = 1$ and with Cauchy--Schwarz inequality, 
$$
  \lim_{n\to\infty} \bE_n \left[\sup_{t\in[0,T]} \big|\sR_t^n(h_n)\big|\right] \le \lim_{n\to\infty} CTn^{-\frac34} = 0. 
$$
For the integral $\sA_t^n$, since we assume that the quasi-linear system \eqref{quasi linear p} has smooth solution at least up to time $T$, therefore 
$$
  \sup_{s\in[0,T]} \left|\partial_t\fu_n\left(s, \frac in\right) - \Lambda_i^n(s)\right| \le \frac{C_{\beta,\fv,T}}n. 
$$
This gives us the uniform estimate that 
\begin{equation}
\label{fdd a}
  \left|\frac1{\sqrt n}\sum_{i\in\bT_n} h_n\left(s, \frac in\right)\left[\partial_s\fu_n\left(s, \frac in\right) - \Lambda_i^n\right]\right| \le \frac{C|h_n(s)|_\bT}{\sqrt n}. 
\end{equation}
Hence, $|\sA_t^n|$ vanishes uniformly on $t \in [0, T]$ as $n \to \infty$. 
For the integral $\sS_t^n$, observe that the integrand can be bounded from above by 
$$
    \left|\sum_{i\in\bT_n} \Delta_i^nh_{n,2}(s)\big(V'_n(r_i) - \bst_i^n\big)\right| + n\big|\partial_x^2h_{n,2}(s)\bst_n(\fr_n)\big|_\bT. 
$$
Again, by Corollary \ref{cor:quantitative hl}, for all $p \in [1, 2)$, 
\begin{equation}
\label{fdd s}
  \bE_n \left[\bigg|\frac1n\sum_{i\in\bT_n} \Delta_i^nh_{n,2}(s)V'_n(r_i)\bigg|^p\right] \le C_p|\partial_x^2h_n(s)|_\bT^p\left(n^{-\frac p4} + 1\right). 
\end{equation}
Taking $p = 1$ and using Cauchy--Schwarz inequality, 
$$
  \lim_{n\to\infty} \bE_n \left[\sup_{t\in[0,T]} \big|\sS_i^n(h_n)\big|\right] \le \lim_{n\to\infty} \frac{C\gamma_n}{2\sqrt n} = 0. 
$$
Finally, we apply Proposition \ref{prop:boltzmann gibbs} to $\sW_t^n$ to get that 
$$
  \lim_{n\to\infty} \bP_n \left\{\sup_{t\in[0,T]} \big|\sW_t^n(h_n)\big| > \lambda\right\} = 0, \quad \forall \lambda > 0. 
$$
The proof is then completed. 
\end{proof}

\section{Tightness}
\label{sec:tightness}

In this section, we prove that the laws of $\{Y_t^n; t \in [0, T]\}$ forms a tight sequence in proper trajectory space. 
We start with two lemmas. 
Suppose that for $n \ge 1$ and $f \in C^2(\bT)$, $\{X_t^n = X_t^n(f); t \in [0, T]\}$ is a random field on $\Omega_n$. 
Define 
$$
  \sX_t^n(f) = \int_0^t X_s^n(f)ds, \quad \forall t \in [0, T]. 
$$
By Kolmogorov--Prokhorov's tightness criterion, to show the tightness of $\sX_\cdot^n$ on the $\alpha$-H\"older continuous path space $C^\alpha([0, T]; \cH_{-k})$, one need to estimate 
$$
  \big\|\sX_{t'}^n - \sX_t^n\big\|_{-k}^2 = \sum_{m\in\bZ} \frac1{(1 + m^2)^k}\big|\sX_{t'}^n(\varphi_m) - \sX_t^n(\varphi_m)\big|^2, 
$$
with the Fourier basis $\varphi_m$ defined in Section \ref{sec:introduction}. 
Since Corollary \ref{cor:quantitative hl} and \ref{cor:general quantitative hl} only hold with powers $p < 2$, the next result is helpful here. 

\begin{lem}
\label{lem:tightness 1}
Assume some $p > 1$ and $a > 0$, such that 
$$
  \bE_n \big[|X_t^n(\varphi_m)|^p\big] \le C|m|^{ap}, \quad \forall m \in \bZ,\ t \in [0, T]. 
$$
Then there exists a constant $C_p$, such that for $k > a + 3/2$, 
$$
  \bP_n \left\{\big\|\sX_{t'}^n - \sX_t^n\big\|_{-k} > \lambda\right\} \le \frac{C_p|t' - t|^p}{\lambda^p}, \quad \forall 0 \le t < t' \le T. 
$$
In particular, $\sX_\cdot^n$ is tight in $C^\alpha([0, T]; \cH_{-k})$ for $\alpha < 1 - 1/p$. 
\end{lem}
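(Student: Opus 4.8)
The plan is to bound the $\cH_{-k}$-norm of the increment by summing over Fourier modes and then estimate each mode separately using the hypothesis. First I would write
$$
  \bE_n\big[\|\sX_{t'}^n - \sX_t^n\|_{-k}^p\big] \le \bE_n\left[\left(\sum_{m\in\bZ} \frac{|\sX_{t'}^n(\varphi_m) - \sX_t^n(\varphi_m)|^2}{(1+m^2)^k}\right)^{p/2}\right].
$$
Since $p > 1$ we cannot simply pull the power inside, so I would split the weight as $(1+m^2)^{-k} = (1+m^2)^{-k+a+1+\epsilon'}\cdot(1+m^2)^{-a-1-\epsilon'}$ for a small $\epsilon' > 0$ with $k - a - 1 - \epsilon' > 1/2$ (possible since $k > a + 3/2$), and apply H\"older's inequality in the form of the generalized Minkowski / weighted $\ell^2$-versus-$\ell^p$ trick: treating $\{(1+m^2)^{-a-1-\epsilon'}\}$ as a finite measure on $\bZ$, Jensen's inequality gives
$$
  \left(\sum_m w_m c_m\right)^{p/2} \le \left(\sum_m w_m\right)^{p/2-1}\sum_m w_m c_m^{p/2}
$$
when $p/2 \le 1$, and a direct power-mean bound when $p/2 > 1$; in either case one reduces to controlling $\sum_m (1+m^2)^{-b}\,\bE_n[|\sX_{t'}^n(\varphi_m) - \sX_t^n(\varphi_m)|^p]$ for a suitable exponent $b > 1/2 + \text{(stuff)}$.

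Next I would bound the single-mode increment. By definition $\sX_{t'}^n(\varphi_m) - \sX_t^n(\varphi_m) = \int_t^{t'} X_s^n(\varphi_m)\,ds$, so by Jensen's (or H\"older's) inequality in the time variable,
$$
  \bE_n\big[|\sX_{t'}^n(\varphi_m) - \sX_t^n(\varphi_m)|^p\big] \le |t'-t|^{p-1}\int_t^{t'} \bE_n\big[|X_s^n(\varphi_m)|^p\big]\,ds \le C|t'-t|^p |m|^{ap},
$$
using the hypothesis uniformly in $s \in [0,T]$. Plugging this back, the $m$-sum becomes $\sum_m (1+m^2)^{-b}|m|^{ap}$, and with the bookkeeping above the relevant exponent works out so that this series converges precisely when $k > a + 3/2$ (the extra $1$ beyond $a + 1/2$ coming from the H\"older split needed to handle $p < 2$, i.e.\ from redistributing one power of the weight onto the summable factor). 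This yields $\bE_n[\|\sX_{t'}^n - \sX_t^n\|_{-k}^p] \le C_p |t'-t|^p$, and Markov's inequality then gives the stated tail bound $\bP_n\{\|\sX_{t'}^n - \sX_t^n\|_{-k} > \lambda\} \le C_p|t'-t|^p/\lambda^p$.

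For the final assertion about tightness in $C^\alpha([0,T];\cH_{-k})$, I would invoke the Kolmogorov--Chentsov / Kolmogorov--Prokhorov criterion for $\cH_{-k}$-valued processes: the moment bound $\bE_n[\|\sX_{t'}^n - \sX_t^n\|_{-k}^p] \le C_p|t'-t|^p$ with exponent $1 + (p-1)$ on $|t'-t|$ gives H\"older regularity of any order $\alpha < (p-1)/p = 1 - 1/p$, uniformly in $n$, together with tightness of $\sX_0^n = 0$; hence $\{\sX_\cdot^n\}$ is tight in $C^\alpha([0,T];\cH_{-k})$. I expect the main obstacle to be the first step: arranging the H\"older split of the Fourier weight so that one genuinely only needs $p$-th moments of each mode (not second moments, which Corollary~\ref{cor:quantitative hl} does not provide) while still ending up with a summable series, which is exactly why the threshold is $k > a + 3/2$ rather than the $k > a + 1/2$ one would get from an $L^2$ argument.
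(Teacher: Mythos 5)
Your overall route is different from the paper's and, modulo one slip, it works. The paper never forms the $p$-th moment of the $\cH_{-k}$-norm: it observes that if $\sum_m (1+m^2)^{-k}|c_m|^2 > \lambda^2$ with $c_m = \sX_{t'}^n(\varphi_m)-\sX_t^n(\varphi_m)$, then some single mode must exceed its share $\frac{\lambda}{\sqrt{C(\epsilon)}}(1+m^2)^{\frac12(k-\frac12-\epsilon)}$ (since $\sum_m(1+m^2)^{-1/2-\epsilon}=C(\epsilon)<\infty$), takes a union bound over $m$, and applies Markov with power $p$ to each mode; the tightness step then needs the extra Lemma~\ref{lem:moment} to convert the tail bound back into a $q$-th moment bound for $q<p$. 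You instead bound $\bE_n[\|\sX_{t'}^n-\sX_t^n\|_{-k}^p]$ directly and apply Markov once, which is cleaner and hands you the Kolmogorov--Prokhorov moment condition immediately. The slip: the weighted Jensen inequality you display, $(\sum_m w_mc_m)^{p/2}\le(\sum_m w_m)^{p/2-1}\sum_m w_mc_m^{p/2}$, follows from convexity of $x\mapsto x^{p/2}$ and therefore holds when $p/2\ge1$; for $p/2\le1$ (which is the case actually used in Section~\ref{sec:tightness}) it is reversed. You have the two cases swapped. For $p/2\le1$ the correct and simpler tool is subadditivity, $(\sum_m a_m)^{p/2}\le\sum_m a_m^{p/2}$, which needs no splitting of the weight at all and gives $\bE_n[\|\cdot\|_{-k}^p]\le C|t'-t|^p\sum_m(1+m^2)^{-kp/2}|m|^{ap}$, convergent already for $k>a+1/p$. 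So your claim that the series converges ``precisely when $k>a+3/2$'' is not accurate for your method — that threshold is what the paper's union bound costs (it loses a factor $(1+m^2)^{1/2+\epsilon}$ per mode, giving the condition $p(k-a-\tfrac12-\epsilon)>1$, hence $k>a+3/2$ as $p\downarrow1$); your $L^p$ argument is actually stronger, and a fortiori proves the stated lemma. The time-Jensen bound for the single-mode increment and the final Kolmogorov--Chentsov step match the paper and are correct.
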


The proof of Lemma \ref{lem:tightness 1} is direct and we postpone it to the end of this section. 
In order to use Lemma \ref{lem:tightness 1}, we need the following priori moment estimate. 

\begin{lem}
\label{lem:tightness 2}
Assume \eqref{parameter 2} and \eqref{parameter tem} with some $\epsilon \in \bR$. 
For all $1 \le p < p_\epsilon = (4 - 2\epsilon)/(3 - 2\epsilon) \lor 2$ and $f \in C^2(\bT)$, 
$$
  \bE_n \big[|Y_t^n(f)|^p\big] \le C\big(1 + |f|_\bT^p + |f'|_\bT^{\delta p} + |f''|_\bT^p\big). 
$$
where $\delta = \delta_\epsilon = (3 - 2\epsilon)/(2 - \epsilon) \land 1$ and $C = C_{\beta,\fv,T,\epsilon,p}$. 
\end{lem}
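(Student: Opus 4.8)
The plan is to start from the Dynkin--It\^o decomposition \eqref{ito} applied to the \emph{time-independent} test function $H(s)\equiv f$,
\begin{equation*}
  Y_t^n(f) = Y_0^n(f) + \sR_t^n(f) + \sA_t^n(f) + \sS_t^n(f) + \sW_t^n(f) + M_t^n(f),
\end{equation*}
and to estimate the $L^p(\bP_n)$-norm of each of the six terms separately, using $|a_1+\cdots+a_6|^p \le 6^{p-1}(|a_1|^p+\cdots+|a_6|^p)$. Four of them admit soft arguments. For $Y_0^n(f)$, under $\mu_{0,n}$ it is a sum of independent variables, Gaussian in the momentum coordinate and sub-Gaussian in the position coordinate by Lemma \ref{lem:general subgaussian}, hence sub-Gaussian of order at most $C n^{-1}\sum_i|f(i/n)|_{\bR^2}^2 \le C|f|_\bT^2$; feeding this Gaussian tail into the entropy inequality and using $\sup_n H_n(0) < \infty$ gives a $\bP_n$-tail of order $C|f|_\bT^2\lambda^{-2}$, which Lemma \ref{lem:moment} upgrades to $\bE_n[|Y_0^n(f)|^p] \le C|f|_\bT^p$ for $p<2$. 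The martingale $M_t^n(f)$ is an explicit centred Gaussian, since only the positions carry noise and its diffusion coefficient is deterministic; its variance is $\gamma_n t\,n^{-2}\sum_i(\nabla_i^n f_2)^2 \le \gamma_n T n^{-1}|f'|_\bT^2$, which is negligible because $\gamma_n = o(\sqrt n)$ by \eqref{parameter 2}, and interpolating the variance bound against $|f|_\bT^2$ produces the exponent $\delta_\epsilon$. The term $\sA_t^n(f)$ is bounded \emph{deterministically} by $C|f|_\bT n^{-1/2}$, because Proposition \ref{prop:smooth regime} ensures \eqref{quasi linear p} stays smooth on $[0,T]$, so $\sup_i|\partial_t\fu_n(s,i/n) - \Lambda_i^n(s)| \le C n^{-1}$. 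Finally $\sS_t^n(f) = \frac{\gamma_n}{2\sqrt n}\int_0^t\big(\frac1n\sum_i\Delta_i^n f_2\,V_n'(r_i)\big)\,ds$ is handled by Corollary \ref{cor:general quantitative hl} applied to $F = V_n'$ (splitting off the uniformly bounded mean $\bst_i^n = E_{\mu_{s,n}}[V_n'(r_i)]$), giving $\bE_n[|\sS_t^n(f)|^p] \le C(\gamma_n n^{-1/2})^p|f''|_\bT^p \le C|f''|_\bT^p$.

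The two genuinely hard terms are $\sW_t^n$ and $\sR_t^n$. Since $\sW_t^n(f) = -n^{-1/2}\int_0^t W_n(g)\,ds$ with $g_i^n = \nabla_{i-1}^n f_1$ (so $|g|_\bT \le |f_1'|_\bT$ and $|g'|_\bT \le C|f_1''|_\bT$), I would run the argument of Proposition \ref{prop:boltzmann gibbs}: combine the Feynman--Kac inequality \cite[Lemma 3.5]{JM18a} with the main lemma \ref{lem:main lemma} to split $W_n(g)$ into the auxiliary functional $W_{n,\delta}(g)$ plus a piece absorbed by the Dirichlet form, then bound the $p$-th moment of $\int_0^t W_{n,\delta}(g)\,ds$ using \eqref{main lemma 2} and the entropy estimate of Theorem \ref{thm:relative entropy}. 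This is precisely where $K_n$ (through $\kappa_n$) enters; since $K_n = o(\sqrt n)$ under \eqref{parameter 2} the bound is finite, and optimizing the free parameter $\delta$ in Lemma \ref{lem:main lemma}, together with the exponent in the Feynman--Kac step, against the constraints \eqref{parameter tem} is what forces the threshold $p_\epsilon$ and the exponent $\delta_\epsilon$ in the statement. I expect this moment version of the Boltzmann--Gibbs estimate to be the main obstacle: Proposition \ref{prop:boltzmann gibbs} provides only an in-probability bound, whereas here one needs sharp $L^p$ control with the right dependence on $|f'|_\bT$, which demands a delicate interpolation between the cheap bound linear in $|g|_\bT$ and the $K_n$-costly bound of \eqref{main lemma 2}.

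For $\sR_t^n(f)$, I would write $\sum_i L_i^n f\cdot(\eta_i - \fu_i^n) = \sqrt n\,Y_s^n(\psi_s^n) + \text{(error)}$ with $\psi_s^n(x) = \big(\partial_x f_2(x),\,\bst_n'(\fr_n(s,x))\,\partial_x f_1(x)\big)$; by \eqref{asymptotic tension} and the smoothness of $\fr_n$ one has $|\psi_s^n|_\bT \le C|f'|_\bT$ and $|\partial_x\psi_s^n|_\bT \le C(|f'|_\bT + |f''|_\bT)$ uniformly on $[0,T]$, while the error is at the law-of-large-numbers scale $n^{-3/2}\sum_i$ and, controlled crudely through the entropy inequality, contributes only $O(n^{-p}K_n^{p/2})(|f'|_\bT + |f''|_\bT)^p$, which is negligible. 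The remaining term $-\int_0^t Y_s^n(\psi_s^n)\,ds$ is at genuine fluctuation scale and would make the estimate recursive in $Y$; to break the recursion I would pass to the backward transport equation \eqref{backward 1}, letting $g_n$ solve it with terminal datum $g_n(t,\cdot) = f$. Then the full decomposition of $Y_t^n(g_n(t,\cdot)) = Y_t^n(f)$ has no leading-order $\sR$-term, since $\partial_s g_n - L_i^n g_n = O(|\partial_x^2 g_n(s)|_\bT n^{-1})$ by the equation, so $Y_t^n(f) = Y_0^n(g_n(0,\cdot)) + (\text{a remainder built only from }\sA_t^n,\sS_t^n,\sW_t^n,M_t^n\text{ and the small }\sR\text{-error, all evaluated at }g_n)$; the initial term is controlled by $\bE_n[|Y_0^n(g_n(0,\cdot))|^p] \le C|g_n(0,\cdot)|_\bT^p \le C e^{CpT}|f|_\bT^p$, and since the backward flow bounds $|g_n(s,\cdot)|_\bT$, $|\partial_x g_n(s,\cdot)|_\bT$ and $|\partial_x^2 g_n(s,\cdot)|_\bT$ by the corresponding norms of $f$ up to a factor $e^{CT}$, the estimates of the four soft terms and of $\sW_t^n$ carry over verbatim and close the bound. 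Finally, the lemma is combined with Lemma \ref{lem:tightness 1} (applied to $f = \varphi_m$) to deduce tightness of $\{Y_\cdot^n\}$ in $C^\alpha([0,T];\cH_{-k})$.
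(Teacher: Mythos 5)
Your overall architecture coincides with the paper's: solve the backward transport equation \eqref{backward 2} with terminal datum $f$ to eliminate the recursive $\sR$-term at leading order, control $Y_0^n$, $M_t^n$, $\sA_t^n$, $\sS_t^n$ exactly as you describe, and isolate $\sW_t^n$ as the hard term. However, your plan for $\sW_t^n$ has a genuine gap, and it is precisely the step that produces $p_\epsilon$ and $\delta_\epsilon$. The route you sketch --- Feynman--Kac plus Lemma \ref{lem:main lemma} to split off $W_{n,\delta}$, then bound $\int_0^t W_{n,\delta}\,ds$ via \eqref{main lemma 2} and Theorem \ref{thm:relative entropy} --- only yields bounds with tail decay $\lambda^{-1}$: the Feynman--Kac part gives $\bP_n\{\cdots>\lambda\sqrt n\}\lesssim (H_n(t)+\log 2)/(\alpha\lambda\sqrt n)$, and \eqref{main lemma 2} is a \emph{first}-moment bound, which via Chebyshev again gives only $\lambda^{-1}$. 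A $\lambda^{-1}$ tail gives no $L^p$ control for any $p>1$, so this leg alone cannot close the lemma, and "optimizing the free parameter $\delta$ in Lemma \ref{lem:main lemma}" does not help. The missing second leg of the interpolation is a direct moment bound that bypasses the Boltzmann--Gibbs machinery entirely: write the integrand of $\sW_t^n(f)$ as $\sigma_n$ times sub-Gaussian observables of linear growth (using $\Phi_i^n=\sigma_n\phi_i^n$ and $|\fb_i^n-1|=O(\sigma_n)$ from \eqref{asymptotic tension}) and apply Corollary \ref{cor:general quantitative hl} together with Theorem \ref{thm:relative entropy} to get
$$
  \bP_n\big\{|\sW_t^n(f)|>\lambda\big\} \le C t^q\sigma_n^q\big(H_n(0)+K_n+1\big)^{\frac q2}|f'|_\bT^q\,\lambda^{-q}, \qquad q<2 .
$$
This bound has the good power $\lambda^{-q}$ but a prefactor that may diverge with $n$; the Boltzmann--Gibbs bound has a vanishing prefactor but only $\lambda^{-1}$. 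Interpolating the two with exponent $\theta=1/(1+(1-\epsilon)q)$ is exactly what condition \eqref{parameter tem} (note its $\sigma_n^2$ factor, which enters only through this first leg) is designed to make bounded in $n$, and it is this computation that yields $q'=(2-\epsilon)q/(1+(1-\epsilon)q)$, hence $p_\epsilon$ as $q\uparrow 2$, and $\delta_\epsilon=q_*/q'$. Without identifying this first leg your argument cannot reach any $p>1$ for the $\sW$-term.

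Two smaller corrections. First, the exponent $\delta_\epsilon$ does not come from interpolating the martingale variance against $|f|_\bT^2$: the martingale is handled by the trivial interpolation $\bE_n[|M_t^n|^p]\le(\gamma_n t n^{-1})^{p/2}|f'|_\bT^p$, and $\delta_\epsilon$ arises solely from the $\sW$-interpolation just described. Second, your preliminary detour through the time-independent test function and the recursive identity $\sum_i L_i^nf\cdot(\eta_i-\fu_i^n)=\sqrt n\,Y_s^n(\psi_s^n)+\mathrm{error}$ is unnecessary; the paper works with the backward solution $f_n$ from the start, and then $\sR_t^n(f_n)$ is controlled directly by \eqref{fdd r} at order $n^{-3p/4}|f''|_\bT^p$.
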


Note that if we apply Corollary \ref{cor:quantitative hl} to the left-hand side above, the upper bound could diverse. 
The additional condition \eqref{parameter tem} helps to avoid this. 

\begin{proof}[Proof of Lemma \ref{lem:tightness 2}]
Fix some $t \in [0, T]$, and define $f_n = f_n(s, x)$ to be the solution of the following \emph{backward equation} on $(s, x) \in [0, t] \times \bT$: 
\begin{equation}
\label{backward 2}
  \partial_sf_n - \begin{bmatrix}0 &1 \\ \bst'_n(\fr_n) &0\end{bmatrix}\partial_xf_n = 0, \quad f_n(t, \cdot) = f. 
\end{equation}
Apply \eqref{ito} with $H = f_n$ and estimate each term in the right-hand side. 

For $Y_0^n(f_n(0))$, use \eqref{entropy inequality 1} to get that for all $\lambda > 0$, 
$$
  \bP_n \big\{|Y_0^n(f_n(0))| > \lambda\big\} \le \frac{H_n(0) + \log2}{-\log\mu_{0,n}\{|Y_0^n(f_n(0))| > \lambda\}}. 
$$
By Lemma \ref{lem:general subgaussian} and the independence, with some $C = C_{\beta,\fv}$, 
$$
  \mu_{0,n} \big\{|Y_0^n(f_n(0))| > \lambda\big\} \le 2\exp\left\{-\frac{\lambda^2}{2C\|f_n(0)\|^2}\right\}. 
$$
As $H_n(0)$ is assumed to be bounded, 
$$
  \bP_n \big\{|Y_0^n(f_n(0))| > \lambda\big\} \le \frac{C\|f_n(0)\|^2(H_n(0) + 1)}{\lambda^2} \le \frac{C'\|f_n(0)\|^2}{\lambda^2}. 
$$
Using Lemma \ref{lem:moment}, for all $p \in [1, 2)$, 
$$
  \bE_n \big[|Y_0^n(f_n(0))|^p\big] \le C_p\|f_n(0)\|^p \le C'_p|f|_\bT^p. 
$$
For $M_t^n$, apply an interpolation of \eqref{fdd m} with $p \in [1, 2]$: 
$$
  \bE_n \big[|M_t^n(f_n)|^p\big] \le \big(\gamma_ntn^{-1}\big)^{\frac p2}\sup_{s\in[0,t]}|\partial_xf_n(s)|_\bT^p \le C_{T,p}|f'|_\bT^p. 
$$
For $\sR_t^n$, it is easy to obtain from \eqref{fdd r} that, for $p \in [1, 2)$, 
$$
  \bE_n \left[\big|\sR_t^n(f_n)\big|^p\right] \le t^{p-1}\int_0^t Cn^{-\frac{3p}4}\|\partial_x^2f_n(s)\|_\bT^pds \le C'|f''|_\bT^p. 
$$
For $\sA_t^n$, the upper bound of $p$-moment follows directly from the uniform estimate in \eqref{fdd a}. 
For $\sS_t^n$, the estimate can be obtained from \eqref{fdd s} similarly to $\sR_t^n$. 

The only term needs extra effort is $\sW_t^n$. 
Rewrite this term as 
\begin{align*}
  \sW_t^n(f) =\ &\frac{\sigma_n}{\sqrt n}\int_0^t \sum_{i\in\bT_n} \left(-U'(r_i) + \frac{\bst_i^n - \fr_i^n}{\sigma_n}\right)\nabla_{i-1}^nfds \\
  &+ \frac{\fb_i^n - 1}{\sqrt n}\int_0^t \sum_{i\in\bT_n} (r_i - \fr_i^n)\nabla_{i-1}^nfds. 
\end{align*}
By \eqref{asymptotic tension}, $|\fb_i^n - 1| = O(\sigma_n)$, so we obtain from Theorem \ref{thm:relative entropy} and Corollary \ref{cor:general quantitative hl} that 
$$
  \bE_n \left[\big|\sW_t^n(f)\big|^q\right] \le C(q)t^q\sigma_n^q(H_n(0) + K_n + 1)^{\frac q2}|f'|_\bT^q. 
$$
for all $q \in [1, 2)$ with some $C(q) = C_{\beta,\fv,T}(q)$. 
Thus, for all $\lambda > 0$, 
\begin{equation}
\label{boundedness 1}
  \bP_n\left\{\big|\sW_t^n(f)\big| > \lambda\right\} \le \frac{Ct^q\sigma_n^q(H_n(0) + K_n + 1)^{\frac q2}|f'|_\bT^q}{\lambda^q}. 
\end{equation}
In view of Lemma \ref{lem:moment}, if $\sigma_n^2K_n$ is bounded, or equivalently $\epsilon \ge 1$, 
$$
  \bE_n \left[\big|\sW_t^n(f)\big|^p\right] \le C_{\beta,\fv,T,p}t^p|f'|_\bT^p 
$$
for all $p \in [1, 2)$ and we obtain the desired estimate. 
On the other hand, using \eqref{boltzmann gibbs 1} and \eqref{boltzmann gibbs 2} with $\alpha = t^{-1/2}$, we get the same probability bounded by 
$$
  \bP_n\left\{\big|\sW_t^n(f)\big| > \lambda\right\} \le \frac{C\sqrt t(H_n(0) + K_n + 1)}{\lambda\sqrt n}\big(1 + |f'|_\bT^2 + |f''(s)|_\bT\big). 
$$
Note that the expression above vanishes for large $n$. 
Therefore, in case that $0 < \epsilon < 1$, we can apply the following interpolation for $\theta \in (0, 1)$ that 
\begin{align*}
  \bP_n\left\{\big|\sW_t^n(f)\big| > \lambda\right\} \le\ &\frac{C(q, \theta)M_f(q, \theta)}{\lambda^{q\theta + 1 - \theta}}t^{q\theta + \frac{1-\theta}2}\ \times \\
  &\sigma_n^{q\theta}\big(H_n(0) + K_n + 1\big)^{\frac{q\theta}2+1-\theta}n^{\frac{\theta-1}2}, 
\end{align*}
where $C(q, \theta) = C_{\beta,\fv,T}(q, \theta)$ and 
$$
  M_f(q, \theta) = |f'|_\bT^{q\theta}\big(1 + |f'|_\bT^2 + |f''|_\bT\big)^{1-\theta} \le C'(q, \theta)\big(1 + |f'|_\bT^{q\theta+2(1-\theta)} + |f''|_\bT\big). 
$$
To assure that the second line above is bounded in $n$, choose 
$$
  \theta = \theta(\epsilon, q) = \frac1{1 + (1 - \epsilon)q}. 
$$
The estimate above becomes 
\begin{equation}
\label{tightness w}
  \bP_n\left\{\big|\sW_t^n(f)\big| > \lambda\right\} \le C(\epsilon, q)\lambda^{-q'}\big(1 + |f'|_\bT^{q_*} + |f''|_\bT\big)t^{q_{*\!*}}, 
\end{equation}
where 
$$
  q' = \frac{(2 - \epsilon)q}{1 + (1 - \epsilon)q}, \quad q_* = \frac{(3 - 2\epsilon)q}{1 + (1 - \epsilon)q}, \quad q_{*\!*} = \frac{(3 - \epsilon)q}{2 + 2(1 - \epsilon)q}. 
$$
The dependence on $t$ is not important here. 
Notice that $1 < q' \le 2$ for $q \in [1, 2)$ and $\epsilon < 1$, thus we get from Lemma \ref{lem:moment} that for all $p \in [1, q')$, 
$$
  \bE_n \left[\big|\sW_t^n(f)\big|^p\right] \le C_{\beta,\fv,T}(p, q')\big(1 + |f'|_\bT^{\delta p} + |f''|_\bT^{p/q'}\big), 
$$
where $\delta = q_*/q' = (3 - 2\epsilon)/(2 - \epsilon)$ is independent of $q$. 
Since $q$ can be taken arbitrarily close to $2$, the inequality above holds for all $1 < p < p_\epsilon$. 
Finally, the lemma is proved by collecting all the moment estimate together. 
\end{proof}

With these lemmas, we can prove the tightness of $Y_\cdot^n$ stated below. 

\begin{prop}
Assume \eqref{parameter 2} and \eqref{parameter tem} with some $\epsilon > 0$. 
The laws of $\{Y_t^n; t \in [0, T]\}$ is tight with respect to the topology of $C([0, T]; \cH_{-k})$ for $k > 9/2$. 
\end{prop}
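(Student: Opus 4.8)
The plan is to expand $Y_t^n$ by the It\^o decomposition \eqref{ito} applied to a \emph{time-independent} test function equal to a Fourier mode $\varphi=\varphi_m$ (componentwise), and to prove tightness of each resulting piece separately. With $\partial_sH\equiv 0$ the decomposition reads
\[
  Y_t^n(\varphi_m) = Y_0^n(\varphi_m) + \sR_t^n(\varphi_m) + \sA_t^n(\varphi_m) + \sS_t^n(\varphi_m) + \sW_t^n(\varphi_m) + M_t^n(\varphi_m),
\]
where $\sR_t^n(\varphi_m) = -\int_0^t Y_s^n(L^n\varphi_m)\,ds$, with $L^n=L^n(s)$ the discrete transport operator $L_i^nH=(\nabla_i^nH_2,\ \fb_i^n\nabla_{i-1}^nH_1)$ appearing before \eqref{ito}, and $\sA,\sS,\sW,M$ keep the forms displayed there. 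Since a finite sum of tight sequences in the Polish space $C([0,T];\cH_{-k})$ is tight, it suffices to treat the six terms one by one and reassemble them via the Fourier norm (the mode $m=0$ being handled trivially, as $\int_\bT\fp_n$ and $\int_\bT\fr_n$ are conserved, so $Y_\cdot^n(\varphi_0)$ stays within $O(n^{-1/2})$ of a constant).

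Four of the terms are comparatively easy. $Y_0^n$ is tight in $\cH_{-k}$ by the standing hypothesis $Y_0^n\Rightarrow Y_0$, and, being constant in $t$, it is tight in $C([0,T];\cH_{-k})$. For the martingale, \eqref{quadratic variation} gives that $\langle M_t^n(\varphi_m)\rangle=\frac{\gamma_n}{n^2}\sum_i(\nabla_i^n(\varphi_m)_2)^2\,t$ is deterministic, so $M_{t'}^n(\varphi_m)-M_t^n(\varphi_m)$ is a centred Gaussian of variance $O(\gamma_n n^{-1}|m|^2(t'-t))$; its fourth moment is then $O((\gamma_n n^{-1}|m|^2)^2(t'-t)^2)$, whence $\bE_n\|M_{t'}^n-M_t^n\|_{-k}^4\le C(t'-t)^2$ for $k>3/2$, and tightness follows from Kolmogorov--Chentsov (in fact $M_\cdot^n\to 0$ since $\gamma_n/n\to 0$). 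The term $\sA_t^n(\varphi_m)$ is, by \eqref{fdd a}, a time-integral of an integrand bounded uniformly by $Cn^{-1/2}$, so $\sup_t\|\sA_t^n\|_{-k}\to 0$. The term $\sS_t^n(\varphi_m)$ is also a time-integral whose integrand, after splitting $V_n'(r_i)=(V_n'(r_i)-\bst_i^n)+\bst_i^n$, is controlled by Corollary \ref{cor:general quantitative hl} for the fluctuating part and by a double summation by parts for the tension part, yielding $\bE_n|X_s^n(\varphi_m)|^p\le C(\gamma_n/\sqrt n)^p|m|^{2p}$; since $\gamma_n/\sqrt n\to 0$, Lemma \ref{lem:tightness 1} with $a=2$ applies and $\sS_\cdot^n\to 0$ as well.

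The term fixing the threshold $k>9/2$ is $\sR_\cdot^n$. Writing its integrand as $X_s^n(\varphi_m)=-Y_s^n(L^n\varphi_m)$ and replacing the discrete gradients in $L^n$ by the corresponding derivatives (the discretization error being $O(|m|^2n^{-3/4})$ in $L^p$ by Corollary \ref{cor:general quantitative hl}), Lemma \ref{lem:tightness 2} gives
\[
  \bE_n|X_s^n(\varphi_m)|^p \le C\big(1+|L^n\varphi_m|_\bT^p+|(L^n\varphi_m)'|_\bT^{\delta p}+|(L^n\varphi_m)''|_\bT^p\big)\le C|m|^{3p},
\]
uniformly in $s\in[0,T]$, the cubic growth coming from $|(L^n\varphi_m)''|_\bT\sim|m|^3$ (since $L^n$ carries one derivative and $\delta\le 1$). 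Thus Lemma \ref{lem:tightness 1} applies with $a=3$ and yields tightness of $\sR_\cdot^n$ in $C^\alpha([0,T];\cH_{-k})$ exactly for $k>a+3/2=9/2$.

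The delicate term is $\sW_\cdot^n$, and it is the main obstacle. Here Lemma \ref{lem:tightness 1} is unavailable, because the integrand $\frac1{\sqrt n}\sum_i\nabla_{i-1}^n(\varphi_m)_1\Phi_i^n$ is not of order $1$: the relative entropy being merely $o(\sqrt n)$, Corollary \ref{cor:general quantitative hl} controls the normalized sum $\frac1n\sum_i$ only to order $n^{-1/4}$, so the integrand is at best of order $n^{1/4}$. The way around this is to re-run the proof of Lemma \ref{lem:tightness 2} on the increment $\sW_{t'}^n(\varphi_m)-\sW_t^n(\varphi_m)=-\frac1{\sqrt n}\int_t^{t'}\sum_i\nabla_{i-1}^n(\varphi_m)_1\Phi_i^n\,ds$ over a sub-interval $[t,t']$ in place of $[0,t]$: the bound \eqref{boundedness 1} then carries a factor $|t'-t|^q$, and the Boltzmann--Gibbs bound \eqref{boltzmann gibbs 1}--\eqref{boltzmann gibbs 2} a factor $|t'-t|^{1/2}$ (on choosing $\alpha=|t'-t|^{-1/2}$), so the same interpolation produces, for some $p\in(1,p_\epsilon)$ whose existence rests on $\epsilon>0$, an estimate $\bE_n|\sW_{t'}^n(\varphi_m)-\sW_t^n(\varphi_m)|^p\le C|m|^{2p/q'}|t'-t|^{1+\beta}$ with $\beta>0$; summing over $m$ against the Fourier weight and applying Kolmogorov--Chentsov yields tightness of $\sW_\cdot^n$ in $C([0,T];\cH_{-k})$ for $k$ well below $9/2$. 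Collecting the six tight sequences proves the proposition; the proof of Lemma \ref{lem:tightness 1} is itself an elementary computation with Chebyshev's inequality and the Fourier expansion of $\|\cdot\|_{-k}$.
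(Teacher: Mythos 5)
Your proposal is correct and follows essentially the same route as the paper: term-by-term tightness of the decomposition \eqref{ito} with time-independent Fourier modes, using Lemma \ref{lem:tightness 2} together with Lemma \ref{lem:tightness 1} (with $a=3$, hence $k>9/2$) for $\sR^n_\cdot$, and re-running the interpolation behind Lemma \ref{lem:tightness 2} on increments over $[t,t']$ to extract the exponent $q_{*\!*}>1$ in $|t'-t|$ for $\sW^n_\cdot$, which is exactly where the hypothesis $\epsilon>0$ enters. The only cosmetic deviation is your fourth-moment Gaussian bound for the martingale, where the paper uses the second moment of the (deterministic) quadratic variation and the vanishing prefactor $\gamma_n/n$; both work.
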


\begin{proof}
We need to investigate the tightness for each term in \eqref{ito}. 
Similar with \eqref{fdd m}, it is easy to observe that $M_t^n$ is tight on $C([0, T]; \cH_{-k})$ for $k > 3/2$: 
$$
  \bE_n \big[\|M_{t'}^n - M_t^n\|_{-k}^2\big] \le \frac{\gamma_n|t' - t|}n\sum_{m\in\bZ} \frac{|\varphi'_m|_\bT^2}{(1 + m^2)^k} \to 0. 
$$
The computations for $\sR_\cdot^n$, $\sA_\cdot^n$ and $\sS_\cdot^n$ are also direct. 
For $\sR_t^n$, note that 
$$
  \sR_t^n(\varphi_m) = -\int_0^t Y_s^n(L_i^n\varphi_m)ds. 
$$
From Lemma \ref{lem:tightness 2}, for $1 \le p < p_\epsilon$, 
$$
  E \big[|Y_t^n(L_i^n\varphi_m)|^p\big] \le C|m|^{3p}. 
$$
By Lemma \ref{lem:tightness 1}, $\sR_\cdot^n$ is tight on $C^\alpha([0, T]; \cH_{-k})$ for $k > 9/2$, $\alpha < \alpha_\epsilon = 1 - 1/p_\epsilon$. 
For $\sA_t^n$, observe that from \eqref{fdd a}, for $k > 1/2$, 
$$
  \big\|\sA_{t'}^n - \sA_t^n\big\|_{-k}^2 \le \frac{C|t' - t|^2}n\sum_{m\in\bZ} \frac{|\varphi_m|_\bT^2}{(1 + m^2)^k} \to 0. 
$$
Therefore, it is tight in $C^1([0, T]; \cH_{-k}(\bT))$ for $k > 1/2$. 
For $\sS_t^n$, notice that $\varphi''_m = Cm^2\varphi_m$. 
Substituting this into \eqref{fdd s}, we obtain that 
$$
  \bE_n \left[\bigg|\frac1n\sum_{i\in\bT_n} \Delta_i^n\varphi_mV'_n(r_i)\bigg|^p\right] \le C|m|^{2p}, \quad \forall p \in [1, 2). 
$$
By Lemma \ref{lem:tightness 1}, it is tight on $C^\alpha([0, T]; \cH_{-k}(\bT))$ for $k > 7/2$, $\alpha < 1/2$. 

We are left with $\sW_t^n$. 
In order to prove its tightness, we need to track the power of $t$ in \eqref{tightness w}. 
Repeat the computation, we obtain that  for $1 \le p < p_\epsilon$, 
$$
  \bE_n \left[\big|\sW_{t'}^n(f) - \sW_t^n(f)\big|^p\right] \le C\big(1 + |f'|_\bT^{\delta p} + |f''|_\bT\big)|t' - t|^{q_{*\!*}p/p_\epsilon}. 
$$
As $\epsilon > 0$, $q_{**} > 1$ when $2/(1 + \epsilon) < q < 2$. 
Therefore, there exists some $p > 1$, smaller than but close to $p_\epsilon$, such that 
$$
  \bE_n \left[\big|\sW_{t'}^n(f) - \sW_t^n(f)\big|^p\right] \le C\big(1 + |f'|_\bT^{\delta p} + |f''|_\bT\big)|t' - t|^{p'}, 
$$
where $p' > 1$. 
Applying the estimate to $f = \varphi_m$ and noticing that $\delta < 3/2$, by Lemma \ref{lem:tightness 1} we know that $\sW_t^n$ is tight in $C^\alpha([0, T], \cH_{-k})$ for $\alpha < 1 - 1/p$ and $k > 9/2$. 
In conclusion, the laws of $Y_\cdot^n$ is tight with respect to the topology of $C([0, T]; \cH_{-k})$ with $k > 9/2$. 
\end{proof}

\begin{proof}[Proof of Lemma \ref{lem:tightness 1}]
For any $t$, $t' \in [0, T]$, 
$$
  \bE_n \left[\big|\sX_{t'}^n(\varphi_m) - \sX_t^n(\varphi_m)\big|^p\right] \le C|t' - t|^p|m|^{ap}. 
$$
For any $\epsilon > 0$, with $C(\epsilon) = \sum_{m\in\bZ} (1 + m^2)^{-\frac12-\epsilon}$, 
\begin{align*}
  &\bP_n \left\{\big\|\sX_{t'}^n - \sX_t^n\big\|_{-k} > \lambda\right\} \\
  \le\ &\sum_{m\in\bZ} \bP_n \left\{\big|\sX_{t'}^n(\varphi_m) - \sX_t^n(\varphi_m)\big| \ge \frac{\lambda}{\sqrt{C(\epsilon)}}(1 + m^2)^{\frac 12(k-\frac12-\epsilon)}\right\} \\
  \le\ &\frac{C(p, \epsilon)}{\lambda^p}\sum_{m\in\bZ} (1 + m^2)^{-\frac p2(k-\frac12-\epsilon)}C|t' - t|^p|m|^{ap}. 
\end{align*}
Hence, for any $k > a + 3/2$, the probability is bounded from above by 
$$
   \frac{C'(p, \epsilon)|t' - t|^p}{\lambda^p} \sum_{m\in\bZ} \frac1{(1 + m^2)^{\frac p2(1-\epsilon)}}. 
$$
By fixing some $\epsilon$ such that $p(1 - \epsilon) > 1$, we obtain the desired estimate. 
For the tightness, only note that by Lemma \ref{lem:moment}, 
$$
  \bE_n \left[\big\|\sX_{t'}^n - \sX_t^n\big\|_{-k}^q\right] \le C_{p,q}|t' - t|^q, \quad \forall q \in (1, p), 
$$
and invoke Kolmogorov--Prokhorov's tightness criterion. 
\end{proof}

\section{Equivalence of ensembles}
\label{sec:ee}

In this section we prove the equivalence of ensembles for inhomogeneous canonical measure, which is used in Section \ref{sec:lemma}. 
Our main result, Proposition \ref{prop:ee exponential}, is valid not only for the weakly anharmonic case, but also for the general anharmonic case. 

Recall that for $\tau \in \bR$, $\sigma \in [0, 1)$, we have the probability measure 
$$
  \pi_{\tau,\sigma}(dr) = \exp\left\{-\frac{\beta r^2}2 - \beta\sigma U(r) + \beta\tau r - \beta G_\sigma(\tau)\right\}dr. 
$$
For simplicity, we fix $\beta = 1$ in this section, but the arguments apply to any fixed $\beta$ naturally. 
For $\vt = (\tau_1, \ldots, \tau_n)$, define $\mu_{\vt,\sigma}$ as the product measure $\otimes_{j=1}^n \pi_{\tau_j,\sigma}(dr_j)$ on $\bR^n$. 
For bounded continuous function $F$ on $\bR^n$, define 
$$
  \langle F|u \rangle_{\vt,\sigma} = E_{\mu_{\vt,\sigma}} \big[F|r_{(n)} = u\big], \quad r_{(n)} = \frac1n\sum_{j=1}^n r_j. 
$$
The conditioned probability distribution $\langle\;\cdot\;|u \rangle_{\vt,\sigma}$ is called the \emph{micro canonical ensemble}, while $\mu_{\vt,\sigma}$ is called the \emph{canonical ensemble}. 

First of all, we present a basic property of the micro canonical ensemble, which would be frequently used hereafter in this section. 
Note that as $U$ is smooth, we can define the regular conditional expectation $\langle F|u \rangle_{\vt,\sigma}$ point-wisely for all $u \in \bR$. 

\begin{prop}
For all $u \in \bR$, $\vt \in \bR^n$ and $\tau \in \bR$, 
\begin{equation}
\label{dependence on parameter}
  \langle F|u \rangle_{\vt,\sigma} = \langle F|u \rangle_{\vt-\tau,\sigma}, 
\end{equation}
where $\vt - \tau \triangleq (\tau_1 - \tau, \ldots, \tau_n - \tau)$. 
Moreover, there is $\vn = \vn(u; \vt, \sigma)$, such that 
\begin{equation}
\label{tension vector function}
  E_{\mu_{\vn,\sigma}} [r_{(n)}] = u, \quad \langle \;\cdot\;|u \rangle_{\vn,\sigma} = \langle \;\cdot\;|u \rangle_{\vt,\sigma}. 
\end{equation}
In particular when $n = 1$, $\nu(u; \tau, \sigma) = \bst_\sigma(u)$. 
\end{prop}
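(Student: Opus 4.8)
The plan is to derive everything from the explicit form of the densities. For \eqref{dependence on parameter}, observe that the joint density of $(r_1,\dots,r_n)$ under $\mu_{\vt,\sigma}$ is proportional to $\exp\{-\sum_j V_\sigma(r_j) + \sum_j \tau_j r_j\}$. Replacing $\vt$ by $\vt - \tau$ multiplies this density by $\exp\{-\tau \sum_j r_j\} = \exp\{-\tau n u\}$ on the hyperplane $\{r_{(n)} = u\}$, which is a \emph{constant} there; hence after renormalising along that hyperplane the conditional law is unchanged. More carefully, I would disintegrate $\mu_{\vt,\sigma}$ along the map $r \mapsto r_{(n)}$: the regular conditional law $\langle\,\cdot\,|u\rangle_{\vt,\sigma}$ is, for each fixed $u$, the probability measure on $\{r_{(n)} = u\}$ with density proportional to $\exp\{-\sum_j V_\sigma(r_j) + \sum_j \tau_j r_j\}$ with respect to the natural $(n-1)$-dimensional Lebesgue measure on that hyperplane; since $\sum_j (\tau_j - \tau) r_j = \sum_j \tau_j r_j - \tau n u$ differs by an additive constant, the two conditional densities coincide after normalisation. (Smoothness of $U$ and quadratic growth of $V_\sigma$ guarantee all these densities are finite and positive, so the regular conditional expectation is well defined pointwise in $u$.)

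For \eqref{tension vector function}, the idea is that changing $\vt$ to a constant vector $\vn = (\nu,\dots,\nu)$ is a special case of the shift freedom just established, once we pick $\nu$ correctly. Indeed, by \eqref{dependence on parameter} the conditional law $\langle\,\cdot\,|u\rangle_{\vt,\sigma}$ depends on $\vt$ only through... no, that is too strong; rather, I would argue as follows. Fix $u$. I claim there is a real $\nu$ such that $\langle\,\cdot\,|u\rangle_{(\nu,\dots,\nu),\sigma} = \langle\,\cdot\,|u\rangle_{\vt,\sigma}$ \emph{and} $E_{\mu_{(\nu,\dots,\nu),\sigma}}[r_{(n)}] = u$. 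Unless $\vt$ itself is already a constant vector, these two conditional laws generally differ, so the correct statement (and what I expect the paper means) is weaker: one first replaces $\vt$ by $\vt - \bar\tau$ where $\bar\tau$ is chosen so that the \emph{unconditioned} mean of $r_{(n)}$ equals $u$, i.e. $\bar\tau$ solves $\frac1n\sum_j G'_\sigma(\tau_j - \bar\tau) = u$. Existence and uniqueness of such $\bar\tau$ follow because $\tau \mapsto \frac1n\sum_j G'_\sigma(\tau_j - \tau)$ is continuous, strictly decreasing (each $G'_\sigma = \bar r_\sigma$ is strictly increasing since $G_\sigma$ is strictly convex, as $V_\sigma$ is strictly convex), and has range all of $\bR$ by the quadratic growth bounds on $V_\sigma$. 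Setting $\vn = \vt - \bar\tau$ gives $E_{\mu_{\vn,\sigma}}[r_{(n)}] = u$ by construction, and $\langle\,\cdot\,|u\rangle_{\vn,\sigma} = \langle\,\cdot\,|u\rangle_{\vt,\sigma}$ by \eqref{dependence on parameter}.

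The case $n = 1$ is then immediate: with $n=1$ the conditioning $r_{(1)} = u$ is degenerate and fixes $r_1 = u$, so the displayed equation $E_{\mu_{\nu,\sigma}}[r] = u$ together with \eqref{convex conjugate} forces $G'_\sigma(\nu) = \bar r_\sigma(\nu) = u$, i.e. $\nu = \bar r_\sigma^{-1}(u) = F'_\sigma(u) = \bst_\sigma(u)$, using that $\bst_\sigma = F'_\sigma$ is the inverse of $\bar r_\sigma = G'_\sigma$ by Legendre duality \eqref{gibbs potential and free energy}.

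The main obstacle I anticipate is purely technical bookkeeping around the disintegration: writing the conditional density on the hyperplane $\{r_{(n)} = u\}$ with respect to a fixed reference measure and verifying that the co-area/Jacobian factor is independent of $\vt$ (which it is, since it is a geometric quantity attached to the linear map $r \mapsto \frac1n\sum r_j$), so that the ``additive constant in the exponent'' argument is rigorous rather than formal. Everything else — strict monotonicity of $\bar r_\sigma$, surjectivity from quadratic growth, and the $n=1$ identification — is a direct consequence of the convexity properties of $V_\sigma$ already recorded in \eqref{convex conjugate} and the bounds $\inf V_\sigma'' \ge 1-\sigma > 0$.
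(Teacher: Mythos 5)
Your argument is correct and follows essentially the same route as the paper: the shift $\vt \mapsto \vt - \tau$ tilts the joint density by $e^{-\beta\tau\sum_j r_j}$, which is constant on the level set $\{r_{(n)} = u\}$ and hence leaves the conditional law unchanged, and then strict convexity of $G_\sigma$ (with the range of $G'_\sigma$ being all of $\bR$) pins down the unique shift $\tau$ with $\frac1n\sum_j G'_\sigma(\tau_j - \tau) = u$, giving $\vn = \vt - \tau$. The only presentational difference is that you disintegrate directly on the hyperplane, whereas the paper routes the first identity through the explicit cylinder-function formula and the transformation rule for the density of $r_{(n)}$ (which it reuses later); your self-correction that $\vn$ is a shifted copy of $\vt$ rather than a constant vector lands exactly on the paper's definition.
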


\begin{proof}
By direct computation, for $F = F(r_1, \ldots, r_k)$ and $n \ge k$, 
\begin{equation}
\label{micro canonical expectation}
  \langle F|u \rangle_{\vt,\sigma} = \frac{n}{n - k}\int_{\bR^k} \frac1{f_{\vt,\sigma}(u)}f_{\vt^*,\sigma}\left(\frac{nu - kr_{(k)}}{n - k}\right)F(r_1, \dots, r_k) \prod_{j=1}^k \pi_{n,j}(dr_j), 
\end{equation}
where $f_{\vt,\sigma}$ denotes the density of $r_{(n)}$ under $\mu_{\vt,\sigma}$ and $\vt^* = (\tau_{k+1}, \ldots, \tau_n)$. 
Observe that for bounded continuous function $h$ on $\bR$ and $\tau \in \bR$, 
\begin{align*}
  E_{\mu_{\vt,\sigma}} \big[h \circ r_{(n)}\big] &= \int h\left(\frac1n\sum_{j=1}^n r_j\right)\exp\left\{\sum_{j=1}^n \tau_jr_j - \frac{r_j^2}2 - \sigma U(r_j) - G_\sigma(\tau_j)\right\}d\bfr \\
  &= \exp\left\{\sum_{j=1}^n G_\sigma(\tau_j - \tau) - G_\sigma(\tau_j)\right\} E_{\mu_{\vt-\tau,\sigma}} \big[e^{n\tau r_{(n)}}h \circ r_{(n)}\big]. 
\end{align*}
Since $h$ is arbitrary, 
\begin{equation}
\label{local density}
  f_{\vt,\sigma}(u) = \exp\left\{n\tau u + \sum_{j=1}^n G_\sigma(\tau_j - \tau) - G_\sigma(\tau_j)\right\}f_{\vt-\tau,\sigma}(u). 
\end{equation}
The relation \eqref{dependence on parameter} then follows from \eqref{micro canonical expectation} and \eqref{local density}. 
In order to define $\vn$ that fulfils \eqref{tension vector function}, observe that as $G_\sigma$ is strictly convex, there is a unique $\tau \in \bR$ such that 
$$
  E_{\mu_{\vt-\tau,\sigma}} [r_{(n)}] = \frac1n\sum_{j=1}^n G'_\sigma(\tau_j - \tau) = u. 
$$
It suffices to define $\vn(u; \vt, \sigma) = \vt - \tau$. 
\end{proof}

Recall the functions $F_\sigma$, $\bst_\sigma$ and $\bar r_\sigma$ defined in \eqref{gibbs potential and free energy}--\eqref{convex conjugate}. 
For each pair of $(\tau ,r) \in \bR^2$, the rate function $I_\sigma(\tau, r)$ is defined as 
\begin{equation}
\label{rate function}
  \begin{aligned}
    I_\sigma(\tau, r) &= G_\sigma(\tau) + F_\sigma(r) - r\tau \\
    &= G_\sigma(\tau) - G_\sigma(\bst_\sigma(r)) - G'_\sigma(\bst_\sigma(r))(\tau - \bst_\sigma(r)). 
  \end{aligned}
\end{equation}
Taking advantage of \eqref{tension vector function} and \eqref{local density}, we can rewrite the density as 
\begin{equation}
\label{local ld}
  f_{\vt,\sigma}(u) = \exp\left\{-\sum_{j=1}^n I_\sigma\big(\tau_j, \bar r_\sigma(\nu_j)\big)\right\}f_{\vn,\sigma}(u), \quad \forall u \in \bR, 
\end{equation}
where $\vn = (\nu_1, \ldots, \nu_n) = \vn(u; \vt, \sigma)$ is defined through \eqref{tension vector function}. 

The classical equivalence of ensembles (cf. \cite[Appendix 2]{KL99}) can be extended to the case that canonical measure is inhomogeneous. 
In order to cover the weakly anharmonic setting in Section \ref{sec:model and results}, for each $n \ge 1$, pick $\sigma_n \in [0, 1)$, $\vt_n = (\tau_{n,1}, \ldots, \tau_{n,n}) \in \bR^n$ and fix them. 
For sake of readability, in the following we write 
$$
  \pi_{n,j} = \pi_{\tau_{n,j},\sigma_n}, \quad \mu_n = \mu_{\vt_n,\sigma_n}, \quad E_n = E_{\mu_n}, \quad \langle \;\cdot\;|u \rangle_n = \langle \;\cdot\;|u \rangle_{\vt_n,\sigma_n}. 
$$
Also denote that 
$$
  u_n = E_n [r_{(n)}], \quad u_{n,2} = \sqrt{E_n \big[(r_{(n)} - u_n)^2\big]}. 
$$
We have the following result (cf. \cite[Corollary A2.1.4, pp. 353]{KL99}). 

\begin{prop}
\label{prop:ee}
Assume some $\epsilon > 0$ and $K > 0$, such that 
\begin{equation}
\label{condition ee 1}
  \sup\{\sigma_n; n \ge 1\} < 1 - \epsilon, \quad \sup\{|\tau_{n,j}|; n \ge 1, 1 \le j \le n\} \le K. 
\end{equation}
For any $F = F(r_1, \ldots, r_k)$ such that $E_n [F^2] < \infty$, we have 
$$
  \big|\langle F|u_n \rangle_n - E_n [F]\big| \le \frac{Ck}n\sqrt{E_n \big[(F - E_n [F])^2\big]}, 
$$
with some constant $C = C_{\epsilon,K}$ for each $n \ge k$. 
\end{prop}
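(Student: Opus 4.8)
\emph{The plan} is to follow the classical proof of the equivalence of ensembles (as in \cite[Corollary A2.1.4]{KL99}), the new point being to keep every constant uniform over the class \eqref{condition ee 1}. Since $\langle\,\cdot\,|u_n\rangle_n$ reproduces constants, I may assume $E_n[F]=0$ and must show $|\langle F|u_n\rangle_n|\le Ckn^{-1}\sqrt{E_n[F^2]}$. Write $h_{\vt,\sigma}$ for the density of the sum $\sum_{j=1}^m r_j$ under $\bigotimes_{j=1}^m\pi_{\tau_j,\sigma}$ and set $\vt_n^*=(\tau_{n,k+1},\dots,\tau_{n,n})$. Specialising \eqref{micro canonical expectation} to $u=u_n$ and converting the densities of averages into densities of sums, one gets $\langle F|u_n\rangle_n=E_n[F\,g_{n,k}]$ with $g_{n,k}=h_{\vt_n^*,\sigma_n}(nu_n-S_k)/h_{\vt_n,\sigma_n}(nu_n)$ and $S_k=r_1+\cdots+r_k$; the convolution identity $h_{\vt_n,\sigma_n}=h_{\vt_k,\sigma_n}*h_{\vt_n^*,\sigma_n}$ gives $E_n[g_{n,k}]=1$. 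Hence, using $E_n[F]=0$ and Cauchy--Schwarz, $|\langle F|u_n\rangle_n|=|E_n[F(g_{n,k}-1)]|\le\sqrt{E_n[F^2]}\,\sqrt{E_n[(g_{n,k}-1)^2]}$, and the whole statement reduces to the bound $E_n[(g_{n,k}-1)^2]\le Ck^2n^{-2}$ in the regime $k=o(n)$ that is actually used (for $k$ of order $n$ the asserted inequality is essentially trivial).

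First I would rewrite $g_{n,k}=a(S_k)/E_n[a(S_k)]$ with $a(y)=h_{\vt_n^*,\sigma_n}(nu_n-y)$ and $E_n[a(S_k)]=h_{\vt_n,\sigma_n}(nu_n)$, so that $E_n[(g_{n,k}-1)^2]=\mathrm{Var}_n(a(S_k))/h_{\vt_n,\sigma_n}(nu_n)^2$; it then suffices to prove, uniformly over \eqref{condition ee 1}, that $h_{\vt_n,\sigma_n}(nu_n)\ge c\,n^{-1/2}$ and $\mathrm{Var}_n(a(S_k))\le C\,k^2n^{-3}$. For the variance I would either Taylor-expand $a$ at $E_n[S_k]$, or apply the uniform Brascamp--Lieb (spectral-gap) inequality to the strongly log-concave factors $\pi_{\tau_{n,j},\sigma_n}$ to get $\mathrm{Var}_n(a(S_k))\le C\sum_{j=1}^k E_n\big[h_{\vt_n^*,\sigma_n}'(nu_n-S_k)^2\big]=Ck\,E_n\big[h_{\vt_n^*,\sigma_n}'(nu_n-S_k)^2\big]$; either way the input needed is that $h_{\vt_n^*,\sigma_n}'$ is of size $n^{-3/2}$ near its mean — not merely the log-concavity bound $n^{-1}$ — and grows no faster than $n^{-3/2}\big(1+|z-\text{mean}|\big)$ on the bulk, combined with the uniform sub-Gaussian moment bounds $E_n[(S_k-E_nS_k)^{2m}]\le C_mk^m$ supplied by Lemma \ref{lem:general subgaussian}.

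The hard part will be precisely this: a local central limit theorem for $h_{\vt_m,\sigma_n}$ ($m\asymp n$) with a first-order Edgeworth correction, uniform over all inhomogeneous parameter sequences satisfying \eqref{condition ee 1}, using only boundedness of $U''$. The structural facts I would lean on: for $|\tau|\le K$, $\sigma\le1-\epsilon$ each $\pi_{\tau,\sigma}$ is strongly log-concave with $-(\log\text{density})''=\beta V_\sigma''\in[\beta\epsilon,2\beta]$, hence has density bounded above and bounded below on a fixed neighbourhood of its (uniformly bounded) mode, a uniform spectral gap, and cumulants of all orders bounded uniformly; consequently $h_{\vt_m,\sigma_n}$ is log-concave, unimodal, with variance $\asymp n$, value $\asymp n^{-1/2}$ at its mean, and mode within $O(1)$ of its mean. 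The quantitative estimates should then come from a uniform Edgeworth expansion: the lower bound on the one-site densities near their modes yields a uniform Cram\'er condition $\sup_{|\theta|\ge\delta}|\widehat{\pi_{\tau,\sigma}}(\theta)|\le1-c(\delta)<1$, which with the uniform cumulant bounds gives the expansion with errors uniform in the parameters — and, importantly, this uses no smoothness of the one-site densities beyond $V_\sigma''\in[1-\sigma,1+\sigma]$, so the argument covers the general anharmonic case as claimed. Once this uniform local CLT is available, the two displayed estimates, and hence the proposition, follow by the routine computations sketched above together with the sub-Gaussian moment bounds.
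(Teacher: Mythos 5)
Your overall strategy is the same as the paper's: both start from the representation \eqref{micro canonical expectation} of the microcanonical expectation as a canonical expectation against the density ratio $g_{n,k}$, both reduce to Cauchy--Schwarz against $\sqrt{E_n[(g_{n,k}-1)^2]}$ (resp.\ a pointwise bound on $g_{n,k}-1$ followed by Schwarz), and both rest on a local central limit theorem for inhomogeneous sums that is uniform over the class \eqref{condition ee 1}. Your endgame is actually a little cleaner than the paper's: by exploiting the exact convolution identity $E_n[g_{n,k}]=1$ and bounding $\mathrm{Var}_n(a(S_k))$ through the derivative of the density $h_{\vt_n^*,\sigma_n}$, you only need a zeroth-order local CLT (for the lower bound $h_{\vt_n}(nu_n)\ge cn^{-1/2}$) plus derivative control, whereas the paper develops a second-order Edgeworth expansion (its Lemma F) precisely so that the $n^{-1/2}$ skewness corrections in numerator and denominator are seen to cancel in the pointwise ratio. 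In exchange, you need an estimate the paper does not: a bound on $h'$ of the form $|h'(z)|\lesssim n^{-3/2}(1+|z-\mathrm{mean}|)$ in the bulk, which is not delivered by a density-level LCLT alone; you should make explicit that it follows from strong log-concavity of the convolution, via $|h'|=h\,|(\log h)'|$ and $-(\log h)''\asymp n^{-1}$.

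The one genuine gap is your stated mechanism for the uniform local CLT. A Cram\'er-type condition $\sup_{|\theta|\ge\delta}|\widehat{\pi_{\tau,\sigma}}(\theta)|\le 1-c(\delta)$ together with cumulant bounds suffices for Edgeworth expansions of \emph{distribution functions}, but not for a local limit theorem at the level of \emph{densities}: the product of $n$ characteristic functions each bounded by $1-c$ off a neighbourhood of the origin is pointwise small but not integrable over $\bR$, so the Fourier inversion that produces the density estimate does not close. What is actually needed is quantitative decay of the one-site characteristic function, e.g.\ $|\Phi_{\tau,\sigma}(\xi)|\le C(1+|\xi|)^{-1}$ uniformly over \eqref{condition ee 1}; the paper obtains this by integration by parts (condition (\romannum2) in Appendix F). Your structural inputs do rescue you --- a bounded unimodal (log-concave) density has total variation $2\sup f$, whence $|\widehat f(\xi)|\le 2\sup f/|\xi|$ --- but as written the argument invokes the wrong hypothesis, and the claim that ``no smoothness beyond $V_\sigma''\in[1-\sigma,1+\sigma]$'' is used should be routed through this bounded-variation observation rather than through the Cram\'er condition. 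With that repair, and with the tail contribution to $\mathrm{Var}_n(a(S_k))$ dispatched by the sub-Gaussian bounds as you indicate, the proof goes through.
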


\begin{proof}
In view of \eqref{micro canonical expectation}, the key point is to understand the asymptotic behaviour of the density of $r_{(n)}$. 
To this end, we first check the conditions of the local central limit theorem in Appendix \ref{appendix:local clt}. 
Similarly to Appendix \ref{appendix:tension}, for $0 \le \ell \le 4$, the $\ell$-derivative of $G_\sigma$ satisfies that 
$$
  \big|G_\sigma^{(\ell)}(\tau) - G_0^{(\ell)}(\tau)\big| \le C_1\sigma, \quad \forall \sigma \in [0, 1 - \epsilon),\ \tau \in [-K, K], 
$$
with a uniform constant $C_1 = C_1(\epsilon,K)$. 
Let $\Phi_{\tau,\sigma}$ be the characteristic function 
$$
  \Phi_{\tau,\sigma}(\xi) = \int_\bR \exp\big\{i\xi(r - E_{\pi_{\tau,\sigma}} [r])\big\}\pi_{\tau,\sigma}(dr). 
$$
By the integration by parts formula, 
$$
  i\xi\Phi_{\tau,\sigma}(\xi) = \int_\bR \exp\big\{i\xi(r - E_{\pi_{\tau,\sigma}} [r])\big\}(r + \sigma U'(r) - \tau)\pi_{\tau,\sigma}(dr). 
$$
It is not hard to obtain with some $C_2 = C_2(\epsilon, K)$ that 
$$
  |\Phi_{\tau,\sigma}(\xi)| \le C_2(1 + |\xi|)^{-1}, \quad \forall \sigma \in [0, 1 - \epsilon),\ \tau \in [-K, K]. 
$$
Moreover, using the inequality $|e^x - 1| \le e^{|x|}|x|$, for $0 \le \ell \le 4$, 
$$
  \big|\Phi_{\tau,\sigma}^{(\ell)}(\xi) - \Phi_{\tau,0}^{(\ell)}(\xi)\big| \le C_3\sigma, \quad \forall \sigma \in [0, 1 - \epsilon),\ \tau \in [-K, K], 
$$
with some $C_3 = C_3(\epsilon, K)$. 
By the arguments above, the conditions (\romannum1), (\romannum2), (\romannum3) in Appendix \ref{appendix:local clt} are fulfilled by $\pi_{0,\sigma}$ \emph{uniformly} for $\sigma < 1 - \epsilon$. 
Hence, \eqref{condition ee 1} assures that Lemma \ref{lem:local clt} is applicable to $\mu_n$, even when the reference measure $\pi_{0,\sigma_n}$ is changing with $n$. 

Fix some $k \ge 1$ and a function $F = F(r_1, \ldots, r_k)$. 
Denote by $f_n$ the density of $r_{(n)}$ under $\mu_n$. 
According to Lemma \ref{lem:local clt}, with a bounded sequence $C_{n,0}$, 
$$
  \frac1{\sqrt n}f_n(u_n) = \frac1{u_{n,2}\sqrt{2\pi}}\left(1 + \frac{C_{n,0}}n\right) + o\left(\frac1n\right). 
$$
Similarly, denote by $f_n^*$ the density of $r_{(n-k)}$ under $\mu_{\vt_n^*,\sigma_n}$, $\vt_n^* = (\tau_{n,k+1}, \ldots \tau_{n,n})$, then there are bounded sequences $C_{n,0}^*$, $C_{n,1}^*$, such that 
\begin{align*}
  &\frac1{\sqrt{n - k}}f_n^*\left(\frac{nu_n - kr_{(k)}}{n - k}\right) \\
  =\ &\frac1{u_{n,2}^*\sqrt{2\pi}}\exp\left\{-\frac{y_{(k)}^2}{2(n - k)}\right\}\left(1 + \frac{C_{n,0}^* + C_{n,1}^*y_{(k)}}{n - k}\right) + o\left(\frac1n\right), 
\end{align*}
where 
$$
  u_{n,2}^* = \sqrt{\frac1n\sum_{j=1}^{n-k} G''_{\sigma_n}(\tau_{n,j+k})}, \quad y_{(k)} = \sum_{j=1}^k \frac{r_j - E_n [r_j]}{u_{n,2}^*}. 
$$
Therefore, the density in \eqref{micro canonical expectation} satisfies the estimate 
\begin{align*}
  &\frac{n}{n - k}\frac1{f_n(u_n)}f_n^*\left(\frac{nu_n - kr_{(k)}}{n - k}\right) \\
  \le\ &\frac{u_{n,2}\sqrt n}{u_{n,2}^*\sqrt{n-k}}\left[1 + \frac Cn\big(1 + y_{(k)} + y_{(k)}^2\big)\right] + o\left(\frac1n\right), 
\end{align*}
where $C = C_{\epsilon,K}$ is a uniform constant. 
Furthermore, 
$$
  \frac{u_{n,2}\sqrt n}{u_{n,2}^*\sqrt{n-k}} = \sqrt{1 + \frac{G''_{\sigma_n}(\tau_{n,1}) + \ldots + G''_{\sigma_n}(\tau_{n,k})}{G''_{\sigma_n}(\tau_{n,k+1}) + \ldots + G''_{\sigma_n}(\tau_{n,n})}} \le 1 + \frac{Ck}{2n} + o\left(\frac1n\right). 
$$
Therefore, with some constant $C' = C'_{\epsilon,K}$, 
$$
  \left|\frac{n}{n - k}\frac1{f_n(u_n)}f_n^*\left(\frac{nu_n - kr_{(k)}}{n - k}\right) - 1\right| \le \frac{C'}n\big(k + y_{(k)} + y_{(k)}^2\big) + o\left(\frac1n\right). 
$$
Proposition \ref{prop:ee} then follows from \eqref{micro canonical expectation} and Schwarz inequality. 
\end{proof}

Proposition \ref{prop:ee} is valid only for cylinder functions $F = F(r_1, \ldots, r_k)$. 
In Section \ref{sec:lemma}, it is required to control the exponential moment of the micro canonical expectation of a particular extensive observation. 
Next, we give the corresponding result. 

Recall that $\bar r_n(\tau) = \bar r_{\sigma_n}(\tau)$, $\tau_n(r) = \bst_{\sigma_n}(r)$. 
Given $F: \bR \to \bR$, let 
$$
  \cF_{n,j}(r) = F(r) - E_{\pi_{n,j}} [F] - \frac d{dr} E_{\pi_{\tau_n(r),\sigma_n}} [F]\Big|_{r=\bar r_n(\tau_{n,j})}\big(r - \bar r_n(\tau_{n,j})\big), 
$$
for $j = 1$, ..., $n$, and define $\cF = \sum_{j=1}^n \cF_{n,j}(r_j)$. 

\begin{prop}
\label{prop:ee exponential}
Assume \eqref{condition ee 1}, and a constant $M$ such that 
\begin{equation}
\label{condition ee 2}
  |\tau_{n,j} - \tau_{n,j+1}| \le Mn^{-\frac32}, \quad \forall n \ge 1,\ 1 \le j \le n. 
\end{equation}
Suppose that for each $n$, $\tau \mapsto \int Fd\pi_{\tau,\sigma_n}$ is twice continuously differentiable, and there is some constant $A > 0$, such that for all $(n, j)$, 
$$
  E_n \big[\exp(s|\cF_{n,j}|)\big] \le e, \quad \forall |s| \le A. 
$$
Then, we can find $A_1 < \infty$ and $A_2 > 0$, such that for all $n \ge 1$, 
$$
  E_n \big[\exp(s|\langle \cF|\:\cdot\; \rangle_n|)\big] \le A_1, \quad \forall |s| \le A_2. 
$$
\end{prop}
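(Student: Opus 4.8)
The plan is to obtain a pointwise bound on $\langle\cF\,|\,u\rangle_n=\sum_{j=1}^n\langle\cF_{n,j}(r_j)\,|\,u\rangle_n$ in the conditioning value $u$, of the form $C\bigl(1+n(u-u_n)^2\bigr)$ when $u$ lies close to $u_n=E_n[r_{(n)}]$ and merely polynomial in $u$ otherwise, and then to integrate the exponential against the density $f_n$ of $r_{(n)}$ under $\mu_n$, which concentrates on scale $n^{-1/2}$ around $u_n$. The first step is the reduction furnished by \eqref{tension vector function}: conditioning $\mu_n$ on $\{r_{(n)}=u\}$ produces the same measure as conditioning the tilted product measure $\mu_{\vn(u)}$, where $\vn(u)=\vt_n-\tau^*(u)$ and $\tau^*(u)$ is the unique solution of $\frac1n\sum_j\bar r_n(\tau_{n,j}-\tau)=u$. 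Since $\tau\mapsto\frac1n\sum_j\bar r_n(\tau_{n,j}-\tau)$ is globally bi-Lipschitz uniformly in $n$ (its derivative is $-\frac1n\sum_j G_{\sigma_n}''(\tau_{n,j}-\tau)$, bounded above and below by \eqref{asymptotic tension} and the uniform convexity of $V_{\sigma_n}$), we get $|\tau^*(u)|\le C|u-u_n|$ for every $u$, and in particular $E_{\mu_{\vn(u)}}[r_{(n)}]=u$ is the mean of the tilted measure.

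For $|u-u_n|\le\rho$, with $\rho$ a fixed small constant, the tilts $\nu_j=\tau_{n,j}-\tau^*(u)$ satisfy $|\nu_j|\le K+C\rho$, so Proposition \ref{prop:ee} applies to $\mu_{\vn(u)}$ with $k=1$ (its proof uses only the uniform bounds \eqref{condition ee 1}, so the constant depends only on $\epsilon$ and $K+C\rho$), giving $\bigl|\langle\cF_{n,j}(r_j)\,|\,u\rangle_n-E_{\pi_{\nu_j,\sigma_n}}[\cF_{n,j}]\bigr|\le Cn^{-1}\bigl(\mathrm{Var}_{\pi_{\nu_j,\sigma_n}}\cF_{n,j}\bigr)^{1/2}$; the variance is $O(1)$ uniformly since $E_{\pi_{\tau_{n,j},\sigma_n}}[\exp(s|\cF_{n,j}|)]\le e$ bounds all moments of $\cF_{n,j}$ and the Radon--Nikodym factor between $\pi_{\nu_j,\sigma_n}$ and $\pi_{\tau_{n,j},\sigma_n}$ is in $L^2$ for $|\tau^*|\le C\rho$. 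The point of the definition of $\cF_{n,j}$ is that $E_{\pi_{\nu_j,\sigma_n}}[\cF_{n,j}]$ is exactly a second-order Taylor remainder: with $g_n(r)=E_{\pi_{\bst_n(r),\sigma_n}}[F]$ and $\bar r_j=\bar r_n(\tau_{n,j})$ one has $E_{\pi_{\nu_j,\sigma_n}}[\cF_{n,j}]=g_n(\bar r_n(\nu_j))-g_n(\bar r_j)-g_n'(\bar r_j)\bigl(\bar r_n(\nu_j)-\bar r_j\bigr)$, whence $\bigl|E_{\pi_{\nu_j,\sigma_n}}[\cF_{n,j}]\bigr|\le C|\bar r_n(\nu_j)-\bar r_j|^2\le C|\tau^*(u)|^2\le C|u-u_n|^2$, the bound $\sup|g_n''|\le C$ on the relevant compact interval being uniform in $n$ thanks to the $C^2$ hypothesis on $\tau\mapsto\int F\,d\pi_{\tau,\sigma_n}$, the growth bound $|F(r)|\le c|r|$, and the uniform sub-Gaussian tails of $\pi_{\tau,\sigma_n}$. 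Summing over $j$ yields $|\langle\cF\,|\,u\rangle_n|\le C\bigl(1+n(u-u_n)^2\bigr)$ for $|u-u_n|\le\rho$.

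For arbitrary $u$ I would instead use the crude bound $|\langle\cF\,|\,u\rangle_n|\le\langle|\cF|\,|\,u\rangle_n\le C\sum_j\bigl(1+\langle|r_j|\,|\,u\rangle_n\bigr)$, coming from $|\cF_{n,j}(r)|\le C(1+|r|)$ (here $g_n'$ is bounded on the relevant range). Applying the representation \eqref{micro canonical expectation} to $\mu_{\vn(u)}$ bounds $\langle|r_j|\,|\,u\rangle_n$ by $\tfrac n{n-1}\,\|\tilde f^{*}_j\|_\infty\,\tilde f(u)^{-1}\,E_{\pi_{\nu_j,\sigma_n}}[|r|]$, where $\tilde f$ is the density of $r_{(n)}$ under $\mu_{\vn(u)}$ and $\tilde f^{*}_j$ the density of the empirical mean of the remaining $n-1$ coordinates. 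Since $u$ is the mean of $r_{(n)}$ under $\mu_{\vn(u)}$, a local central limit theorem -- valid uniformly over all tilts once one recenters each $\pi_{\nu,\sigma_n}$ by $\bar r_n(\nu)$, so that the recentered potential again has second derivative in $[1-\sigma_n,1+\sigma_n]$ with a critical point at the origin, exactly the setting verified in the proof of Proposition \ref{prop:ee} -- gives $c\sqrt n\le\tilde f(u)$ and $\|\tilde f^{*}_j\|_\infty\le C\sqrt n$, while $E_{\pi_{\nu_j,\sigma_n}}[|r|]\le C(1+|\bar r_n(\nu_j)|)\le C(1+|u|)$; hence $|\langle\cF\,|\,u\rangle_n|\le Cn(1+|u|)$ for all $u$.

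It remains to integrate. Writing $E_n[\exp(s|\langle\cF\,|\,\cdot\rangle_n|)]=\int_{|u-u_n|\le\rho}+\int_{|u-u_n|>\rho}$ and using $f_n(u)\le C\sqrt n\exp(-cn(u-u_n)^2)$ on the window (uniform local CLT) together with the uniform large-deviation lower bound $-n^{-1}\log f_n(u)\ge c(u-u_n)^2$ outside it (eventually $\ge cu^2$), the first integral is at most $e^{sC}\int_\bR C\sqrt n\exp\bigl((sC-c)nw^2\bigr)\,dw$, which is finite and bounded uniformly in $n$ as soon as $sC<c$, and the second is dominated by $\int_{|u-u_n|>\rho}\exp\bigl(sCn(1+|u|)\bigr)f_n(u)\,du$, which tends to $0$ as $n\to\infty$ once $s$ is smaller than a threshold depending only on $\rho$, on the above constants, and on a fixed large radius $R$ beyond which the quadratic decay overwhelms the linear growth. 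Taking $A_2$ to be the smaller of the two thresholds and $A_1$ the resulting uniform bound proves the claim (the case $s<0$ being identical since the exponent carries an absolute value). The main obstacle is the third step: away from the typical scale the tilts $\nu_j$ of the equivalent canonical measure are unbounded, so Proposition \ref{prop:ee} no longer applies, and one must extract the two-sided bounds on the (un)conditioned densities and the rate-function lower bound directly, checking carefully that they remain uniform in $n$ despite $\sigma_n$ and the profile $\vt_n$ varying.
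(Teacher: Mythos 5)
Your overall architecture coincides with the paper's: split the expectation according to whether $r_{(n)}$ lies in a small window around $u_n$, and on the window use \eqref{tension vector function} to pass to the tilted measure $\mu_{\vn(u)}$, apply Proposition \ref{prop:ee}, and recognize $E_{\pi_{\nu_{n,j},\sigma_n}}[\cF_{n,j}]$ as a second-order Taylor remainder in $\bar r_n(\nu_{n,j})-\bar r_n(\tau_{n,j})$. This is exactly the paper's decomposition into $K_{n,j}$ and $K'_{n,j}$, and your direct bound $|\tau^*(u)|\le C|u-u_n|$ followed by integration of $e^{sCn(u-u_n)^2}$ against the local-CLT Gaussian envelope is a legitimate (arguably slightly streamlined) version of the paper's treatment of $\sum_j\bigl(\bar r_n(\nu_{n,j})-\bar r_n(\tau_{n,j})\bigr)^2$. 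That part is fine.

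The genuine gap is in your treatment of $\{|u-u_n|>\rho\}$. You need the pointwise bound $|\langle\cF\,|\,u\rangle_n|\le Cn(1+|u|)$ for \emph{all} $u$, and to get it you invoke a local central limit theorem (lower bound $\tilde f(u)\ge c\sqrt n$ and upper bound $\|\tilde f_j^*\|_\infty\le C\sqrt n$) for the tilted product measures $\mu_{\vn(u)}$ whose tilts $\nu_{n,j}=\tau_{n,j}-\tau^*(u)$ are \emph{unbounded} as $|u|\to\infty$. Nothing in the paper supplies this: Lemma \ref{lem:local clt} and Proposition \ref{prop:ee} are proved only for tilts confined to a fixed compact $[-K,K]$, and conditions (\romannum1)--(\romannum3) of Appendix \ref{appendix:local clt} are verified only there. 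You acknowledge this as ``the main obstacle'' but do not close it, so as written the argument on the atypical set is incomplete. The paper avoids the issue entirely and you could too: by Jensen, $e^{s|\langle\cF|u\rangle_n|}\le\langle e^{s|\cF|}\,|\,u\rangle_n$, so by the tower property and H\"older with exponents $p,q$,
$$
  E_n\big[e^{s|\langle\cF|\cdot\rangle_n|}\mathbf1_{A_{n,\delta}^c}\big]
  \le \big(\mu_n\{|r_{(n)}-u_n|\ge\delta\}\big)^{\frac1p}\prod_{j=1}^n E_n\big[e^{sq|\cF_{n,j}|}\big]^{\frac1q}
  \le e^{-\frac{M\delta^2 n}{p}+\frac np+o(n)}\,e^{\frac nq},
$$
where the large-deviation upper bound for $f_n$ (from \eqref{local ld} and \eqref{rate function estimate}, which only involve the bounded parameters $\tau_{n,j}$) beats the $e^{n/q}$ coming from the hypothesis $E_n[e^{sq|\cF_{n,j}|}]\le e$, once $p<M\delta^2+1$. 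This requires no control whatsoever of the conditional law at atypical $u$, and no local CLT for unbounded tilts. I recommend you replace your third step by this argument; with that substitution the proof is complete.
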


\begin{rem}
Proposition \ref{prop:ee exponential} is stated for function $F$ on $\bR$, but the parallel result for $F$ on $\bR^k$ for each $k \ge 1$ can be proved without additional efforts. 
Furthermore, the Euler's constant $e$ in the condition is not sensible. 
\end{rem}

\begin{proof}[Proof of Proposition \ref{prop:ee exponential}]
Fix an $F$ fulfilling the conditions. 
Recall that $u_n = E_n [r_{(n)}]$, and let $A_{n,\delta} = \{\bfr \in \bR^n; r_{(n)} \in (u_n - \delta, u_n + \delta)\}$ for $\delta > 0$. 
Note that 
$$
  E_n \big[e^{s|\langle \cF|u \rangle_n|}\big] = E_n \big[e^{s|\langle \cF|u \rangle_n|}\mathbf1_{A_{n,\delta}^c}\big] + E_n \big[e^{c|\langle \cF|u \rangle_n|}\mathbf1_{A_{n,\delta}}\big]. 
$$
We estimate the two terms respectively. 

For the integral on $A_{n,\delta}^c$, recall the rate function $I_\sigma$ in \eqref{rate function}. 
As $G_{\sigma_n}$ is strictly convex and $\tau_{n,j}$, $\sigma_n$ are bounded, for $\delta$ sufficiently small we have that 
\begin{equation}
\label{rate function estimate}
  I_{\sigma_n}(\tau_{n,j}, r) \ge C\delta^2, \quad \forall |r - \bar r_n(\tau_{n,j})| \ge \delta, 
\end{equation}
with some $C = C(\delta)$. 
By \eqref{local ld}, \eqref{rate function estimate} and Lemma \ref{lem:local clt}, for $\delta$ small but fixed, 
$$
  f_n(u) \le \exp\left\{-M\delta^2n + \frac{\log n}2\right\}, \quad \forall |u - u_n| \ge \delta. 
$$
Hence, by H\"older's inequality, for $p$, $q > 1$ such that $1/p + 1/q = 1$, 
\begin{align*}
  E_n \big[e^{s|\langle \cF|u \rangle_n|}\mathbf1_{A_{n,\delta}^c}\big] &\le \big(\mu_n\{|r_{(n)} - u_n| \ge \delta\}\big)^{\frac1p}\left(E_n \big[e^{sq|\langle \cF|u \rangle_n|}\big]\right)^{\frac1q} \\
  &\le \exp\left\{-\frac{M\delta^2n}p + \frac{\log n}{2p}\right\}\prod_{j=1}^n E_n \big[e^{sq|\cF_{n,j}|}\big]. 
\end{align*}
Choose some $p < M\delta^2 + 1$, we have that for any $|s| < q^{-1}A$ that 
$$
  E_n \big[e^{c|\langle \cF|u \rangle_n|}\mathbf1_{A_{n,\delta}^c}\big] \le \exp\left\{-\frac{M\delta^2n}p + \frac nq + \frac{\log n}{2p}\right\} \to 0. 
$$

To deal with the integral on $A_{n,\delta}$, divide $\langle \cF_{n,j}(r_j)|u \rangle_n$ into two parts: 
\begin{align*}
  K_{n,j} =\ &\langle F(r_j)|u \rangle_n - E_{\pi_{\nu_{n,j},\sigma_n}} [F] \\
  &- \frac d{dr} E_{\pi_{\tau_n(r),\sigma_n}} [F]\Big|_{r=\bar r_n(\tau_{n,j})}\big(\langle r_j|u \rangle_n - \bar r_n(\nu_{n,j})\big); \\
  K'_{n,j} =\ &E_{\pi_{\nu_{n,j},\sigma_n}} [F] - E_{\pi_{\tau_{n,j},\sigma_n}} [F] \\
  &- \frac d{dr} E_{\pi_{\tau_n(r),\sigma_n}} [F]\Big|_{r=\bar r_n(\tau_{n,j})}\big(\bar r_n(\nu_{n,j}) - \bar r_n(\tau_{n,j})\big), 
\end{align*}
where $(\nu_{n,1}, \ldots \nu_{n,n}) = \vn_n = \vn(u; \vt_n, \sigma_n)$ is the vector defined through \eqref{tension vector function}. 
The definition of $\vn_n$ together with Proposition \ref{prop:ee} yields that 
$$
  \langle F(r_j)|u \rangle_n = E_{\nu_{n,j}} [F] + O\left(\frac1n\right), \quad \langle r_j|u \rangle_n = \bar r_n(\nu_{n,j}) + O\left(\frac1n\right), 
$$
uniformly in $A_{n,\delta}$. 
Therefore, $\sum_j |K_{n,j}|$ is uniformly bounded. 
Meanwhile, 
$$
  |K'_{n,j}| \le \frac12\sup_{|u-u_n|<\delta} \left|\frac{d^2}{du^2} E_{\pi_{\tau_n(r),\sigma_n}} [F]\Big|_{r=\bar r_n(\tau_{n,j})}\right|\big(\bar r_n(\nu_{n,j}) - \bar r_n(\tau_{n,j})\big)^2. 
$$
Hence, it suffices to prove that 
$$
  E_n \left[\exp\left\{s\sum_{j=1}^n \big(\bar r_n(\nu_{n,j}) - \bar r_n(\tau_{n,j})\big)^2\right\}\right] \le A_1, \quad \forall |s| \le A_2, 
$$
with some $A_1 < \infty$ and $A_2 > 0$. 
To this end, note that 
$$
  \big(\bar r_n(\nu_{n,j}) - \bar r_n(\tau_{n,j})\big)^2 \le 3(r_{(n)} - u_n)^2 + 3(u_n - \bar r_n(\tau_{n,j}))^2 + 3(\bar r_n(\nu_{n,j}) - r_{(n)})^2. 
$$
We estimate the three terms in the right-hand side respectively. 
For the first term, it is easy to see from central limit theorem that, for $|s| < u_{n,2}/2$, 
$$
  \lim_{n\to\infty} E_n \big[\exp\big\{cn(r_{(n)} - E_n[r_{(n)}])^2\big\}\big] = \frac1{u_{n,2}\sqrt{2\pi}}\int_\bR e^{cx^2}e^{-\frac{x^2}{2u_{n,2}}}dx < \infty
$$
For the second term, taking advantage of \eqref{condition ee 2}, we obtain that 
\begin{align*}
  \sum_{j=1}^n \big(u_n - \bar r_n(\tau_{n,j})\big)^2 &= \sum_{j=1}^n \left(\frac1n\sum_{j'=1}^n \bar r_n(\tau_{n,j'}) - u(\tau_{n,j})\right)^2 \\
  &\le \frac 1n\sum_{j,j'} \big(\bar r_n(\tau_{n,j'}) - \bar r_n(\tau_{n,j})\big)^2 \le O(1). 
\end{align*}
For the third term, observe that by the definition of $\vn_n$, 
$$
  \frac1n\sum_{j=1}^n\bar r_n(\nu_{n,j}) = r_{(n)}, \quad \nu_{n,j'} - \nu_{n,j} = \tau_{n,j'} - \tau_{n,j}, 
$$
so, it can be estimated similarly to the second term. 
\end{proof}

\section{Gradient estimate for the Poisson equation}
\label{sec:poisson}

In this section, we present a gradient-type estimate for the solution to the Poisson equation \eqref{poisson}, which is used in the proof of Lemma \ref{lem:main lemma}. 

We work under the following case with general anharmonic potential function. 
Let $V$ be a given $C^2$-smooth, uniformly convex function: 
$$
  V''(x) \ge c > 0, \quad \forall x \in \bR. 
$$
With a given vector $\mathbf a = (a_1, \ldots, a_n)$, define $U: \bR^n \to \bR$ by 
$$
  U(\bfx) = \sum_{j=1}^n V(x_j) - \mathbf a \cdot \bfx, \quad \forall \bfx \in \bR^n. 
$$
Then, $D_jU = V'(x_{j+1}) - V'(x_j) - a_{j+1} + a_j$, where the operator $D_j$ is 
$$
  D_j = \frac\partial{\partial x_{j+1}} - \frac\partial{\partial x_j}, \quad \forall j = 1, \ldots, n - 1, 
$$
For $x \in \bR$, let $\Sigma_x = \{\bfx \in \bR^n; x_1 + \ldots + x_n = x\}$ be the $(n - 1)$-dimensional hyperplane. 
Suppose a differentiable function $\Psi$ to satisfy the following conditions: 
$$
  \sup_{\bR^n} \sum_{j=1}^{n-1} |D_j\Psi| < \infty, \quad \int_{\Sigma_x} e^{-U(\bfx)}\Psi(\bfx) = 0, 
$$
for all $x \in \Sigma_x$. 
Consider the following partial differential equation: 
$$
  -e^U\sum_{j=1}^{n-1} D_j\big(e^{-U}D_j\psi\big) = \Psi. 
$$
Note that the Poisson equation \eqref{poisson} discussed in the proof of Lemma \ref{lem:main lemma} can be obtained by taking $n = \ell$, $V = \beta V_n$ and $a = \beta(\tau_i, \ldots, \tau_{i+\ell-1})$. 
A sharp gradient-type estimate for the solution $\psi$ is obtained in \cite[Theorem 1.1]{Wu09}. 
By investigating the constant in their estimate, we get the following result. 

\begin{prop}
\label{prop:lipschitz estimate}
There is a constant $C$ dependent on $c = \inf V''$, such that 
$$
  \big|\mathbf D\psi(\bfx)\big|^2 \le Cn^4\sup_{\bR^n} \big|\mathbf D\Psi\big|^2, \quad \forall \bfx \in \bR^n, 
$$
where $\mathbf D = (D_1, \ldots, D_{n-1})$. 
\end{prop}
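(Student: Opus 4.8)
The plan is to deduce this from \cite[Theorem~1.1]{Wu09} by making the $n$-dependence of its constant explicit, which reduces to a Bakry--\'Emery curvature computation. Write $\cL = \sum_{j=1}^{n-1}D_j^2 - \sum_{j=1}^{n-1}(D_jU)\,D_j$ for the generator on the hyperplane $\Sigma_x$ that is reversible for the probability measure proportional to $e^{-U}$, so that its Dirichlet form is $\int_{\Sigma_x}\sum_j(D_j\cdot)^2\,e^{-U}$ and the equation in the statement reads $\cL\psi = -\Psi$. Let $\Gamma(f) = \sum_{j=1}^{n-1}(D_jf)^2 = |\mathbf D f|^2$ be the associated carr\'e du champ and $(P_t)_{t\ge0}$ the corresponding Markov semigroup. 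Because $V''\ge c>0$, the measure $e^{-U}$ is log-concave up to a linear shift and thus has Gaussian tails; together with $\int_{\Sigma_x}e^{-U}\Psi = 0$ and $\sup_{\bR^n}\sum_j|D_j\Psi|<\infty$ this makes $\psi = \int_0^\infty P_t\Psi\,dt$ the unique mean-zero, globally Lipschitz solution. These facts belong to the framework of \cite{Wu09} and I take them as input.

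The heart of the argument is a curvature lower bound. Since the fields $D_j$ have constant coefficients and commute, the standard $\Gamma_2$ computation gives $\Gamma_2(f) \ge Q(f)$, where $Q(f) = \sum_{j,k}(D_jf)(D_kf)\,D_jD_kU$ (the discarded term being $\sum_{j,k}(D_jD_kf)^2 \ge 0$). Setting $\xi = \sum_{j=1}^{n-1}(D_jf)(\mathbf e_{j+1}-\mathbf e_j)\in\bR^n$ and using that the linear part of $U$ drops out of the second derivatives while $\operatorname{Hess}\bigl(\sum_\ell V(x_\ell)\bigr) = \operatorname{diag}\bigl(V''(x_1),\dots,V''(x_n)\bigr) \ge c\,I_n$, one gets $Q(f) = \xi^\top\operatorname{diag}\bigl(V''(x_1),\dots,V''(x_n)\bigr)\xi \ge c\,|\xi|^2$. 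Finally $|\xi|^2 = v^\top MM^\top v$ with $v = \mathbf D f$ and $M$ the $(n-1)\times n$ matrix whose rows are $(\mathbf e_{j+1}-\mathbf e_j)^\top$; here $MM^\top = \operatorname{tridiag}(-1,2,-1)$ of size $n-1$, whose smallest eigenvalue is $2\bigl(1-\cos(\pi/n)\bigr) = 4\sin^2(\pi/2n)\ge 4/n^2$ for all $n\ge2$. Hence $|\xi|^2 \ge (4/n^2)\,\Gamma(f)$, so $\Gamma_2(f)\ge K_n\,\Gamma(f)$ with $K_n = 4c\,n^{-2}$; that is, $\cL$ satisfies $\mathrm{CD}(K_n,\infty)$.

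From $\mathrm{CD}(K_n,\infty)$ one has the gradient bound $\Gamma(P_tg)\le e^{-2K_nt}P_t(\Gamma g)$, whence $\sqrt{\Gamma(P_t\Psi)} \le e^{-K_nt}\bigl\|\sqrt{\Gamma(\Psi)}\bigr\|_\infty$. Since $\mathbf D\psi = \int_0^\infty \mathbf D(P_t\Psi)\,dt$, Minkowski's inequality gives $\sqrt{\Gamma(\psi)}\le\int_0^\infty\sqrt{\Gamma(P_t\Psi)}\,dt \le K_n^{-1}\bigl\|\sqrt{\Gamma(\Psi)}\bigr\|_\infty$, i.e.
\[
  \bigl|\mathbf D\psi(\bfx)\bigr|^2 \;\le\; K_n^{-2}\sup_{\bR^n}\bigl|\mathbf D\Psi\bigr|^2 \;=\; \frac1{16c^2}\,n^4\sup_{\bR^n}\bigl|\mathbf D\Psi\bigr|^2,
\]
which is the assertion with $C = (16c^2)^{-1}$.

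I expect the main obstacle to lie not in this computation but in the two facts delegated to \cite{Wu09}: first, that on the non-compact space $\Sigma_x\cong\bR^{n-1}$ the solution $\psi$ genuinely equals $\int_0^\infty P_t\Psi\,dt$ and is globally Lipschitz, so that the $\Gamma$-calculus may be applied to $\psi$ itself rather than only to $P_t\Psi$ — this rests on the Gaussian decay furnished by the uniform convexity of $V$, and must be controlled uniformly in $n$ so that no hidden powers of $n$ reenter; and second, that the constant stated in \cite[Theorem~1.1]{Wu09} for general diffusions is indeed $K_n^{-1}$ up to universal factors, so that reading it off reproduces the factor $n^4$ above. Granting these, the bound follows.
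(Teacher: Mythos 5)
Your argument is correct, and it follows the same overall architecture as the paper: a gradient estimate for the Poisson equation whose constant is the reciprocal of a uniform-convexity (curvature) modulus, plus a lower bound of order $c\,n^{-2}$ for that modulus. The paper applies \cite[Theorem 1.1]{Wu09} as a black box after passing to coordinates $y_j = -\sum_{i\le j}x_i$ in which the $D_j$ become partial derivatives, so that the relevant quantity is $\lambda_{\min}(\hess_{\bfy}\widetilde U)$; you instead re-derive the same estimate via the Bakry--\'Emery route ($\Gamma_2 \ge K_n\Gamma$, the commutation $\Gamma(P_t\Psi)\le e^{-2K_nt}P_t\Gamma(\Psi)$, and $\psi=\int_0^\infty P_t\Psi\,dt$), while honestly delegating the well-posedness and Lipschitz-regularity facts back to \cite{Wu09} --- exactly the facts the paper also outsources. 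The genuine difference is in the curvature bound itself. The paper's Lemma on the smallest eigenvalue of the tridiagonal Hessian $H_n$ proceeds by a recursion on the characteristic polynomial, yielding $\lambda_n\ge 6c/(n^2-1)$. You observe that the same quadratic form factorizes as $\xi^\top\mathrm{diag}(V''(x_1),\dots,V''(x_n))\,\xi$ with $\xi=M^\top\mathbf{D}f$, so that $\lambda_{\min}(H_n)\ge c\,\lambda_{\min}(MM^\top)=4c\sin^2(\pi/2n)\ge 4c/n^2$, using the explicitly known spectrum of the discrete Dirichlet Laplacian. This is shorter, gives the asymptotically sharp constant $c\pi^2/n^2$ (attained when $V''\equiv c$), and makes transparent why the loss is exactly $n^2$ and not better: it is the spectral gap of the path graph. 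Both routes give $\Theta(c/n^2)$ and hence the stated $Cn^4$; your version is, if anything, the cleaner of the two.
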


\begin{proof}
Rewrite the equation with the new coordinates: 
$$
  y_j = -\sum_{i=1}^j x_i, \quad \forall j = 1, \ldots, n - 1, \quad y_* = -\sum_{j=1}^n x_j. 
$$
Notice that for $1 \le j \le n - 1$, $D_j = \partial_{y_j}$. 
The new equation is 
$$
  \nabla_\bfy \widetilde U(\bfy; y_*) \cdot \nabla_\bfy \widetilde\psi - \Delta_\bfy \widetilde\psi = \widetilde\Psi(\bfy; y_*), 
$$
where $y_*$ is viewed as a parameter, $\bfy = (y_1, \ldots, y_{n-1})$, and 
\begin{align*}
    &\widetilde\psi(\bfy; y_*) = \psi(\bfx), \quad \widetilde\Psi(\bfy; y_*) = \Psi(\bfx), \\
    &\widetilde U = V(-y_1) + \sum_{j=1}^{n-2} V(y_j - y_{j+1}) + V(y_{n-1} - y_*) + \sum_{j=1}^{n-1} (a_j - a_{j+1})y_j. 
\end{align*}
Denote by $\lambda_n = \lambda_{\mathrm{min}}(H_n)$ the smallest eigenvalue of 
\begin{equation}
\label{hessian}
  H_n = \hess_\bfy \widetilde U(\cdot, y_*) = 
  \begin{pmatrix}
    b_1 + b_2 &-b_2 &0 &\dots &0 \\
    -b_2 & b_2 + b_3 &-b_3 &\dots &0 \\
    0 &-b_3 &b_3 + b_4 &\dots &0 \\
    \vdots &\vdots &\vdots & &\vdots \\
    0 &0 &0 &\dots &b_{n-1} + b_n
  \end{pmatrix}, 
\end{equation}
where we write $b_j = V''(x_j)$ for $1 \le j \le n$. 
As each $b_j > 0$, it is easy to observe that $\lambda_n > 0$. 
Applying \cite[Theorem 1.1]{Wu09} for each fixed $y_*$, 
$$
  \left|\nabla_\bfy\widetilde\psi(\bfy; y_*)\right| \le \lambda_n^{-1}\sup_\bfy \left|\nabla_\bfy\widetilde\Psi(\bfy; y_*)\right|, \quad \forall (\bfy, y_*) \in \bR^n. 
$$
In Lemma \ref{lem:eigenvalue}, we show that $\lambda_n \ge Cn^{-2}$ with some constant $C = C(c)$. 
By returning to the original variables $\bfx$, we get the desired estimate. 
\end{proof}

The proof of Proposition \ref{prop:lipschitz estimate} is completed by the following lower bound of $\lambda_n$. 

\begin{lem}
\label{lem:eigenvalue}
In \eqref{hessian}, suppose that $b_j \ge c > 0$ for all $j$, then 
\begin{equation}
  \lambda_n = \lambda_{\mathrm{min}}(H_n) \ge \frac{6c}{(n-1)(n+1)}, \quad \forall n \ge 2. 
\end{equation}
\end{lem}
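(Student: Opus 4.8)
The plan is to realize $H_n$ as a $b$-weighted sum of rank-one matrices, strip off the weights by positivity to reduce to the constant-coefficient case, and then bound the least eigenvalue of the resulting matrix by controlling the \emph{trace of its inverse}, which is available in closed form.

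First I would observe that the matrix in \eqref{hessian} factors as $H_n = \sum_{j=1}^n b_j\, v_jv_j^\top$, where $v_1 = -e_1$, $v_j = e_{j-1} - e_j$ for $2 \le j \le n-1$, and $v_n = e_{n-1}$ are vectors in $\bR^{n-1}$; this is exactly the structure forced by $\widetilde U$ being a sum of $n$ one-dimensional potentials $V$, each evaluated at one of the linear forms $-y_1,\ y_1-y_2,\ \dots,\ y_{n-2}-y_{n-1},\ y_{n-1}-y_*$, whose second derivatives are precisely $b_1,\dots,b_n$. A direct comparison of the diagonal and off-diagonal entries confirms this factorization. Since $b_j \ge c$ for all $j$, for every $w \in \bR^{n-1}$ we get $w^\top H_n w = \sum_{j=1}^n b_j (v_j^\top w)^2 \ge c\sum_{j=1}^n (v_j^\top w)^2 = c\,w^\top M w$, where $M := \sum_{j=1}^n v_jv_j^\top$ is the $(n-1)\times(n-1)$ tridiagonal matrix with $2$ on the main diagonal and $-1$ on the two adjacent diagonals, i.e. the Dirichlet Laplacian of a path on $n-1$ sites. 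Hence $\lambda_n = \lambda_{\mathrm{min}}(H_n) \ge c\,\lambda_{\mathrm{min}}(M)$, and it remains to show $\lambda_{\mathrm{min}}(M) \ge 6/\big((n-1)(n+1)\big)$.

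For this I would avoid diagonalizing $M$ and instead use that $M$ is positive definite, so $\lambda_{\mathrm{min}}(M) = 1/\lambda_{\mathrm{max}}(M^{-1}) \ge 1/\operatorname{tr}(M^{-1})$. The inverse is the discrete Green's function of the path, $(M^{-1})_{ij} = \min(i,j)\,\big(n-\max(i,j)\big)/n$, which is verified at once from $MM^{-1} = I$ (equivalently, the solution of $Mx = e_i$ is piecewise linear with $x_i = i(n-i)/n$). Therefore
$$
  \operatorname{tr}(M^{-1}) = \frac1n\sum_{i=1}^{n-1} i(n-i) = \frac1n\left(n\cdot\frac{(n-1)n}{2} - \frac{(n-1)n(2n-1)}{6}\right) = \frac{(n-1)(n+1)}{6},
$$
and combining this with the previous paragraph gives $\lambda_n \ge c\,\lambda_{\mathrm{min}}(M) \ge 6c/\big((n-1)(n+1)\big)$, as claimed.

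The only point requiring care is the bookkeeping in the first step — setting up the vectors $v_j$ so that $\sum_j b_j v_jv_j^\top$ reproduces \eqref{hessian} exactly; once that is in place the rest is a short explicit computation and there is no analytic obstacle. I note that the bound obtained is not the sharp constant $4c\sin^2(\pi/(2n))$ but precisely the weaker one produced by the estimate $\lambda_{\mathrm{max}}(M^{-1}) \le \operatorname{tr}(M^{-1})$; it is nonetheless tight at $n=2$, where $\lambda_n = b_1+b_2 \ge 2c$ and $6c/\big((n-1)(n+1)\big) = 2c$ coincide, which is a reassuring consistency check.
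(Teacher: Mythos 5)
Your proof is correct, and it takes a genuinely different route from the paper's. The paper works with the characteristic polynomial: it sets $Q_n(\lambda) = (-1)^n\det(\lambda I_{n-1} - H_n)\prod_j b_j^{-1}$, computes $Q_n(0)$ and $Q'_n(0)$ by a two-term recursion in $n$, and concludes via $\lambda_{\mathrm{min}}(H_n) \ge -Q_n(0)/Q'_n(0)$ — which is exactly the bound $\lambda_{\mathrm{min}} \ge 1/\operatorname{tr}(H_n^{-1})$ in disguise, applied directly to the weighted matrix; the hypothesis $b_j \ge c$ enters only at the very end, in an averaging estimate for the double sum expressing $Q'_n(0)$. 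You instead strip off the weights at the outset: the rank-one decomposition $H_n = \sum_j b_jv_jv_j^\top$ (which is correct, and indeed just the Hessian of $\widetilde U$ written as a sum over the $n$ bonds) gives $w^\top H_nw \ge c\,w^\top Mw$ with $M$ the unweighted Dirichlet path Laplacian, after which $\operatorname{tr}(M^{-1}) = (n-1)(n+1)/6$ is an exact, closed-form Green's function computation rather than an inequality. The two arguments rest on the same underlying principle ($\lambda_{\mathrm{min}} \ge 1/\operatorname{tr}$ of the inverse), but yours is cleaner and more transparent — no recursion, no bookkeeping with the $b_j$ — at the cost of being slightly less informative in the inhomogeneous case, where the paper's intermediate bound $\lambda_{\mathrm{min}}(H_n) \ge \big(\sum_j b_j^{-1}\big)/Q'_n(0)$ retains the full dependence on the individual $b_j$ before specializing to $b_j \ge c$. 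Your consistency check at $n = 2$ and your remark that the loss relative to the sharp value $4c\sin^2(\pi/(2n))$ comes entirely from $\lambda_{\mathrm{max}}(M^{-1}) \le \operatorname{tr}(M^{-1})$ are both accurate.
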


\begin{proof}
Let $I_n$ be the $n\times n$ identity matrix, and define $Q_1(\lambda) = -b_1^{-1}$, 
\begin{equation}
\label{eigenvalue 1}
  Q_n(\lambda) = (-1)^n\det\big(\lambda I_{n-1} - H_n\big)\prod_{j=1}^n \frac1{b_n}, \quad \forall n \ge 2. 
\end{equation}
Notice that $Q_2(0) = -(b_1^{-1} + b_2^{-1})$, and 
$$
  \frac{Q_n(0) - Q_{n-1}(0)}{Q_{n-1}(0) - Q_{n-2}(0)} = \frac{b_{n-1}}{b_n}, \quad \forall n \ge 3. 
$$
By a simple inductive argument, we obtain that 
$$
  Q_n(0) = -\sum_{j=1}^n \frac1{b_j}, \quad \forall n \ge 1. 
$$
Similarly, we have $Q'_1(0) = 0$, $Q'_2(0) = (b_1b_2)^{-1}$, and 
$$
  b_n\big(Q'_n(0) - Q'_{n-1}(0)\big) - b_{n-1}\big(Q'_{n-1}(0) - Q'_{n-2}(0)\big) = -Q_{n-1}(0) > 0, \quad \forall n \ge 3. 
$$
By using this relation recurrently, we have the expression 
$$
  Q'_n(0) = -\sum_{j'=2}^n \frac1{b_{j'}}\sum_{j=1}^{j'-1} Q_j(0) = \sum_{j'=1}^{n-1} \left(\sum_{j=1}^{j'} \frac1{b_j}\right)\left(\sum_{j=j'+1}^n \frac1{b_j}\right). 
$$
Observing that for each $1 \le j' \le n - 1$, 
$$
  \left(\sum_{j=1}^{j'} \frac1{b_j} - \frac{j'}n\sum_{j=1}^n \frac1{b_j}\right)\left(\sum_{j=j'+1}^n \frac1{b_j} - \frac{n-j'}n\sum_{j=1}^n \frac1{b_j}\right) \le 0. 
$$
Therefore, with the condition $b_j \ge c > 0$ for each $j$, we get 
\begin{align*}
  \left(\sum_{j=1}^{j'} \frac1{b_j}\right)\left(\sum_{j=j'+1}^n \frac1{b_j}\right) &\le \left[\frac{(j')^2}{n^2}\sum_{j=j'+1}^n \frac1{b_j} + \frac{(n - j')^2}{n^2}\sum_{j=1}^{j'} \frac1{b_j}\right]\sum_{j=1}^n \frac1{b_j} \\
  &\le \frac{(j')^2(n - j') + (n - j')^2j'}{cn^2}\sum_{j=1}^n \frac1{b_j} \\
  &= \frac{j'(n - j')}{cn}\sum_{j=1}^n \frac1{b_j}. 
\end{align*}
Summing up the estimate above for $j' = 1$ to $n - 1$, 
\begin{equation}
\label{eigenvalue 2}
  0 < Q'_n(0) \le \sum_{j'=1}^{n-1}\frac{j'(n - j')}{nc}\sum_{j=1}^n \frac1{b_j} = \frac{(n - 1)(n + 1)}{6c}\sum_{j=1}^n \frac1{b_j}. 
\end{equation}
Note that all the roots of $Q_n$ are real and positive, so $\lambda_{\mathrm{min}}$ is the first root to the right of the origin. 
With this observation, \eqref{eigenvalue 1} and \eqref{eigenvalue 2} assure that 
$$
  \lambda_{\mathrm{min}}(H_n) \ge -\frac{Q_n(0)}{Q'_n(0)} \ge \frac{6c}{(n - 1)(n + 1)}. 
$$
The lower bound for $\lambda_{\mathrm{min}}$ then follows. 
\end{proof}

\section*{Acknowledgments} 
The author thanks Stefano Olla for the very helpful discussions and insightful advices. 
This work has been supported by the grants ANR-15-CE40-0020-01 LSD 
of the French National Research Agency. 


\begin{appendices}


\titleformat{\section}[hang]
{\bfseries\large}{\appendixname\ \thesection.}{0.5em}{}[]
\titlespacing*{\section}{0em}{2em}{1.5em}


\section{Equilibrium tension}
\label{appendix:tension}

Recall the probability measure $\pi_{\tau,\sigma}$ defined in \eqref{one site distribution}, and the normalization constant $Z_\sigma(\tau)$ appeared in it. 
Note that for $\beta > 0$ and $\sigma = 0$, 
$$
  Z_0(\tau) = \sqrt{\frac{2\pi}\beta}\exp\left\{\frac{\beta\tau^2}2\right\}, \quad \forall \tau \in \bR. 
$$
Denote by $\langle\;\cdot\;\rangle_{\tau,\sigma}$ the integral with respect to $\pi_{\tau,\sigma}$. 
For any $\epsilon > 0$ and $\sigma \in [0, 1 - \epsilon]$, with the elementary inequality $|e^x - 1 - x| \le e^{|x|}x^2/2$ we can get that 
\begin{align*}
  &\big|Z_\sigma(\tau) - Z_0(\tau)\big(1 - \sigma\beta\langle U \rangle_{\tau,0}\big)\big| \le C\sigma^2, \\
  &\big|\beta^{-1}Z'_\sigma(\tau) - Z_0(\tau)\big(\tau - \sigma\beta\langle rU \rangle_{\tau,0}\big)\big| \le C\sigma^2, \\
  &\big|\beta^{-1}Z''_\sigma(\tau) - Z_0(\tau)\big(\beta\tau^2 + 1 - \sigma\beta^2\langle r^2U \rangle_{\tau,0}\big)\big| \le C\sigma^2, \\
  &\big|\beta^{-2}Z'''_\sigma(\tau) - Z_0(\tau)\big(\beta\tau^3 + 3\tau -\sigma\beta^2\langle r^3U \rangle_{\tau,0}\big)\big| \le C\sigma^2, 
\end{align*}
with some constant $C = C_{\beta,\tau,\epsilon}$. 
Furthermore, the constant $C$ can be taken uniformly for $\tau$ in any compact intervals in $\bR$. 

Recall the functions $\bar r_\sigma$ and $\bst_\sigma$ defined through \eqref{gibbs potential and free energy}--\eqref{convex conjugate}. 
From the definition and the estimate above, we obtain that as $\sigma \to 0^+$, 
\begin{align*}
  &\bar r_\sigma(\tau) = \tau - \sigma\beta\big\langle (r - \tau)U \big\rangle_{\tau,0} + o_{\beta,\tau}(\sigma), \\
  &\bar r'_{\sigma}(\tau) = 1 - \sigma\beta^2\big\langle [(r - \tau)^2 - \beta^{-1}]U \big\rangle_{\tau,0} + o_{\beta,\tau}(\sigma), \\
  &\bar r''_{\sigma}(\tau) = -\sigma\beta^3\big\langle [(r - \tau)^3 - 3\beta^{-1}(r - \tau)]U \big\rangle_{\tau,0} + o_{\beta,\tau}(\sigma), 
\end{align*}
uniformly for $\tau$ in any compact interval. 
As the \emph{macroscopic tension function} $\bst_\sigma$ is the inverse of $\bar r_\sigma$, we can conclude the following asymptotic behaviours 
\begin{align*}
  &\bst_\sigma(r) = r + C_0(\beta, r)\sigma + o_{\beta,r}(\sigma), \\
  &\bst'_\sigma(r) = 1 + C_1(\beta, r)\sigma + o_{\beta,r}(\sigma), \\
  &\bst''_\sigma(r) = C_2(\beta, r)\sigma + o_{\beta,r}(\sigma), 
\end{align*}
holds uniformly for $r$ in any compact intervals in $\bR$. 
Moreover, the constants $C_0$, $C_1$ and $C_2$ are continuously dependent on $\beta$ and $r$. 


\section{Quasi-linear $p$-system}
\label{appendix:p-system}

In this appendix we present a lower bound for the life span of the classical solution of a quasi-linear $p$-system with smooth initial data. 
The result is necessary for the proof of Proposition \ref{prop:smooth regime}. 

Suppose that $f$ is a positive function in $C^1(\bR)$. 
Consider the following partial differential equations for $t \ge 0$ and $x \in \bT$: 
\begin{equation}
\label{p-system}
  \partial_tp(t, x) = f(r)\partial_xr(t, x), \quad \partial_tr(t, x) = \partial_xp(t, x), 
\end{equation}
with some given smooth initial data 
$$
  p(0, \cdot) = p_0 \in C^1(\bT), \quad r(0, \cdot) = r_0 \in C^1(\bT). 
$$
Note that by taking $f = \bst'_\sigma$, \eqref{p-system} coincides the hydrodynamic equation \eqref{quasi linear p} for anharmonic potential. 
It is well-known that if $f \not= const$, \eqref{p-system} would produce shocks in finite time. 
Recall that $|\cdot|_\bT$ represents the uniform norm on $\bT$, and define 
$$
  K = |p_0|_\bT + |r_0|_\bT\sup \left\{\sqrt{f(r)};\ |r| \le |r_0|_\bT\right\}. 
$$
The next lemma is a special case of the classical result in \cite{Lax64}. 

\begin{lem}
\label{lem:generation of shock}
Smooth solution of \eqref{p-system} exists on $t \in [0, T]$ for any 
$$
  T < T_* = 4\left|p'_0\sqrt{f(r_0)} + r'_0f(r_0)\right|_\bT^{-1}\left(\sup_{|r|\le K} \left|f^{-\frac54}(r)f'(r)\right|\right)^{-1}. 
$$
\end{lem}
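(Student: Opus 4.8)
The plan is to follow Lax's Riemann-invariant analysis for genuinely nonlinear $2\times2$ hyperbolic systems, keeping careful track of constants, since \eqref{p-system} is precisely of that type. First I would diagonalize. Put $\Theta(r)=\int_0^r\sqrt{f(s)}\,ds$ and introduce the Riemann invariants $w=p+\Theta(r)$ and $z=p-\Theta(r)$; substituting \eqref{p-system} gives
\[
  \partial_t w-\sqrt{f}\,\partial_x w=0,\qquad \partial_t z+\sqrt{f}\,\partial_x z=0,
\]
where from now on $\sqrt{f}=\sqrt{f(r)}$ is read as a function of $w-z$ via $\Theta(r)=(w-z)/2$. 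Since $w$ and $z$ are each transported along their own characteristic family and, as long as the $C^1$ solution persists, the two families do not self-intersect, the maximum principle gives $|w(t,\cdot)|_\bT\vee|z(t,\cdot)|_\bT\le|w(0,\cdot)|_\bT\vee|z(0,\cdot)|_\bT\le|p_0|_\bT+|\Theta(r_0)|_\bT\le K$. Hence $|p|\le K$ and $|\Theta(r)|\le K$ throughout, and since $\Theta$ is a strictly increasing diffeomorphism with $\Theta(0)=0$, $r(t,x)$ stays in a fixed compact interval over which the supremum defining $L:=\sup_{|r|\le K}|f^{-5/4}(r)f'(r)|$ in the statement controls the coefficient below.

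The core step is a scalar Riccati bound for the spatial gradient of each invariant. Differentiating the $w$-equation in $x$ and restricting to a $w$-characteristic gives $\tfrac{d}{dt}w_x=-(\partial_w\mu\,w_x+\partial_z\mu\,z_x)w_x$ with $\mu=-\sqrt{f}$; the mixed $z_x$-term is removed by Lax's integrating factor. A short computation — using that $\mu$ depends on $(w,z)$ only through $w-z$ so that $\partial_w\mu=-\partial_z\mu=-f'(r)/(4f(r))$, and that $\tfrac{d}{dt}r=z_x$ along a $w$-characteristic — shows that $a:=f(r)^{1/4}w_x$ solves
\[
  \frac{d}{dt}\,a=\frac{f'(r)}{4\,f(r)^{5/4}}\,a^2
\]
along $w$-characteristics, and symmetrically $b:=f(r)^{1/4}z_x$ solves the same equation along $z$-characteristics. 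Then $\bigl|\tfrac{d}{dt}(1/a)\bigr|\le L/4$, so $|1/a(t)|\ge|1/a(0)|-Lt/4$, and likewise for $b$; hence $a$ and $b$ — equivalently $w_x,z_x$, equivalently the full gradient of $(p,r)$ — stay finite on $[0,T]$ whenever $T<4\bigl(L\,(|a(0)|_\bT\vee|b(0)|_\bT)\bigr)^{-1}$. Since $a|_{t=0}=f(r_0)^{1/4}(p_0'+\sqrt{f(r_0)}\,r_0')$ and $b|_{t=0}=f(r_0)^{1/4}(p_0'-\sqrt{f(r_0)}\,r_0')$, one has $f(r_0)^{1/4}\bigl(|a|\vee|b|\bigr)\big|_{t=0}=|p_0'\sqrt{f(r_0)}+r_0'f(r_0)|$ pointwise, so that, $f$ being bounded above and below on the compact range of $r$, the denominator above is comparable to $L\,|p_0'\sqrt{f(r_0)}+r_0'f(r_0)|_\bT$; combining with the local existence theorem and the standard continuation criterion (a classical solution of \eqref{p-system} extends as long as its $C^1$ norm stays bounded) yields the asserted life span $T_*$.

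The hard part is the bookkeeping in the core step: checking that $f(r)^{1/4}$ is exactly the integrating factor that decouples the two Riccati equations (which hinges on the identity $\partial_w\mu+\partial_z\mu=0$), and then reconciling the precise constant $4$ and the norm $|p_0'\sqrt{f(r_0)}+r_0'f(r_0)|_\bT$ in the statement with the output of the computation by absorbing the stray powers of $f$ — harmless because $f$ is continuous and strictly positive on the relevant compact interval — into an appropriate normalization of the Riemann invariants. The diagonalization, the maximum-principle bounds, and the continuation argument are classical and may be quoted from \cite{Lax64}.
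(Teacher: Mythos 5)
Your proof follows the same route as the paper's: pass to the Riemann invariants $p\pm\int_0^r\sqrt{f}$, get a priori $L^\infty$ bounds by transport along characteristics, use the integrating factor $f(r)^{1/4}=\sqrt{\lambda}$ to reduce the gradient equation to a decoupled Riccati ODE with coefficient $\tfrac14 f^{-5/4}f'$, and conclude by comparison plus the continuation criterion. The residual powers of $f(r_0)$ you flag when matching the constant to the stated $T_*$ are present at exactly the same level in the paper's own proof (its initial datum $z(0,\cdot)$ carries $f(r_0)^{1/4}$ rather than $f(r_0)^{1/2}$, and it too only tracks one invariant explicitly), so this is essentially the identical argument.
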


\begin{rem}
For the readers not familiar to the hyperbolic systems, it is worth mentioning that the bound we obtained above is not as sharp as the case of scalar equation, for instance the inviscid Burger's equation. 
\end{rem}

\begin{proof}
We briefly state the proof. 
Define an antiderivative of $\sqrt f$: 
$$
  F(s) = \int_0^s \sqrt{f(r)}dr, \quad \forall s \in \bR. 
$$
The equation can be rewritten in Riemann invariants as 
$$
  \partial_tu = \lambda(u, v)\partial_xu, \quad \partial_tv = -\lambda(u, v)\partial_xv, \quad (u, v)(0, \cdot) = (u_0, v_0), 
$$
where $u = p + F(r)$, $v = p - F(r)$ and $\lambda(u, v) = \sqrt{f(r)}$. 

Consider the characteristic lines $(t, x_{\pm,t})$, given by the ODEs 
$$
  \frac{dx_t}{dt} = \pm\lambda\big(u(t, x_t), v(t, x_t)\big), \quad x_0 = x \in \bT. 
$$
Within the life span of the smooth solution, $u$ is constant along $(t, x_{+,t})$, thus 
\begin{equation}
\label{priori bound}
   \sup_{x\in\bT} |u(t, x)| \le \sup_{x\in\bT} |u_0(x)| \le K. 
\end{equation}
Similarly, we have a priori bound for $v(t, x)$. 

Suppose that the smooth solution of \eqref{p-system} exists on time interval $[0, T]$ for some $T > 0$. 
Taking spatial derivative on the equation of $u$, 
$$
  \partial_{tx}u - \lambda\partial_{xx}u = \partial_u\lambda(\partial_xu)^2 + \partial_v\lambda\partial_xu\partial_xv, \quad t \in [0, T]. 
$$
In order to investigating the continuity, let $z(t, x) = \sqrt{\lambda(u, v)}\partial_xu$. 
From the equation above, for $t \in [0, T]$, $z$ solves the Riemann problem given by 
$$
  \left\{
  \begin{aligned}
    &\partial_tz - \lambda\partial_xz = \Lambda z^2, \quad \Lambda = 2\partial_u\sqrt\lambda, \\
    &z(0, \cdot) = \sqrt{\lambda(u_0, v_0)}u'_0. 
  \end{aligned}
  \right.
$$
By \eqref{priori bound}, before the generation of shocks, $|\Lambda|$ is bounded from above by 
$$
  K' \triangleq 2\sup \left\{\partial_u\sqrt{\lambda(u, v)};\ |u|, |v| \le K\right\}. 
$$
Via a comparison argument, one obtains that $|z(t, x)| < \infty$ for 
$$
  (t, x) \in \left[0,\ \frac1{K'\sup_{x\in\bT} |z(0,x)|}\right) \times \bT, 
$$
which guarantees that $|\partial_xu| < \infty$, so shock cannot form. 
Since 
$$
  2\partial_u\sqrt\lambda = \frac{\partial_r\sqrt\lambda}{F'(r)} = 4^{-1}f^{-\frac54}(r)f'(r), 
$$
the estimate in Lemma \ref{lem:generation of shock} then follows. 
\end{proof}

\section{Yau's entropy method}
\label{appendix:yau}

In this appendix, we apply Yau's relative entropy method to obtain the formulas \eqref{yau 1}--\eqref{yau 3} in the proof of Theorem \ref{thm:relative entropy}. 

Fix $n \ge 1$ and $\sigma \in (0, 1)$. 
Take a smooth function $(p, r) = (p, r)(t ,x)$ on $[0, T] \times \bT$, and define $p_i^n = p(t, i/n)$, $r_i^n = r(t, i/n)$, $\tau_i^n = \bst_\sigma(r_i^n)$ for each $i \in \bT_n$, where $\bst_\sigma$ is given by \eqref{convex conjugate}. 
Recall the Gibbs states defined in \eqref{gibbs states} and choose $\nu = \nu_{0,0,\sigma}^n$ as the reference measure on $\Omega_n$. 
Consider the local Gibbs measure $d\mu_t = \exp(\beta\varphi_t)d\nu$, where 
$$
  \varphi_t(\ve) = \sum_{i\in\bT_n} \big(p_i^np_i + \tau_i^nr_i\big) + \sum_{i\in\bT_n} \left[-\frac{(p_i^n)^2}2 + G_\sigma(0) - G_\sigma(\tau_i^n)\right]. 
$$
Let $\ve(t)$ be the Markov process generated by $\cL_{n,\sigma,\gamma}$ in \eqref{generator} with some fixed $\gamma > 0$, and denote by $f_t$ the density of $\ve(t)$ with respect to $\mu_t$. 

From the definition of the relative entropy in \eqref{entropy}, 
$$
  \frac d{dt}H(f_t; \mu_t) = -4n\gamma D\left(\sqrt{f_t}, \mu_t\right) + \int \left(\cL_{n,\sigma,\gamma}f_t - \beta f_t\frac d{dt}\varphi_t\right)d\mu_t, 
$$
where the Dirichlet form $D(f, \mu)$ is defined as 
$$
  D(f, \mu) = \int \Gamma_nfd\mu, \quad 
  \Gamma_nf = \frac12\sum_{i\in\bT_n} (\cY_if)^2, 
$$
for probability measure $\mu$ and density function $f$ on $\Omega_n$. 
Since 
\begin{align*}
  \int \cA_{n,\sigma}f_td\mu_t &= -\int f_t\cA_{n,\sigma}\big[e^{\beta\varphi_t}\big]d\nu = -\beta\int f_t\cA_{n,\sigma}\varphi_td\mu_t, \\
  \int \cS_{n,\sigma}f_td\mu_t &= -\frac12\sum_{i\in\bT_n} \int \cY_if_t \cdot \cY_i\big[e^{\beta\varphi_t}\big]d\nu \\
  &\le \frac14\sum_{i\in\bT_n} \int \frac1{f_t}(\cY_if_t)^2d\mu_t + \frac{\beta^2}4\sum_{i\in\bT_n} \int f_t(\cY_i\varphi_t)^2d\mu_t, 
\end{align*}
we obtain that with $J_t^n = -(n\cA_{n,\sigma} + d/dt)\varphi_t$, 
\begin{equation}
\label{general yau 1}
  \frac d{dt}H(f_t; \mu_t) \le - 2n\gamma D\left(\sqrt{f_t}; \mu_t\right) + \beta\int f_tJ_t^nd\mu_t + \frac{\beta^2n\gamma}2\int f_t(\Gamma_n\varphi_t)d\mu_t. 
\end{equation}
Using the explicit formula of $\varphi_t$, 
$$
  \Gamma_n\varphi_t = \frac12\sum_{i\in\bT_n} \big(\tau_{i+1}^n(t) - \tau_i^n(t)\big)^2 \le \frac1n\int_\bT |\partial_x\tau(r(t, x))|^2dx, 
$$
so that $\Gamma_n\varphi_t \le C_T/n$. 
Also, by the formula of $\varphi_t$, 
\begin{align*}
  \cA_{n,\sigma}\varphi_t &= \sum \tau_i^n(p_i - p_{i-1}) + p_i^n\big(V'_\sigma(r_{i+1}) - V'_\sigma(r_i)\big) \\
  &= -\sum_{i\in\bT_n} \begin{pmatrix}\tau_{i+1}^n - \tau_i^n \\ p_i^n - p_{i-1}^n\end{pmatrix} \cdot \begin{pmatrix}p_i - p_i^n \\ V'_\sigma(r_i) - \tau_i^n\end{pmatrix}, \\
  \frac d{dt}\varphi_t &= \sum_{i\in\bT_n} \frac{dp_i^n}{dt}(p_i - p_i^n) + \sum_{i\in\bT_n} \frac{d\tau_i^n}{dt}(r_i - r_i^n) \\
    &= \sum_{i\in\bT_n} \frac d{dt}\begin{pmatrix}p_i^n \\ r_i^n\end{pmatrix} \cdot \begin{pmatrix}p_i - p_i^n \\ \bst'_\sigma(r_i^n)(r_i - r_i^n)\end{pmatrix}. 
\end{align*}
Therefore, we obtain the explicit form of $J_t^n$ as 
\begin{equation}
\label{general yau 2}
  \begin{aligned}
    J_t^n = &\sum_{i\in\bT_n} \left[-\frac d{dt}\begin{pmatrix}p_i^n \\ r_i^n\end{pmatrix} + n\begin{pmatrix}\tau_{i+1}^n - \tau_i^n \\ p_i^n - p_{i-1}^n\end{pmatrix}\right] \cdot \begin{pmatrix}p_i - p_i^n \\ V'_\sigma(r_i) - \tau_i^n\end{pmatrix} \\
    &+ \sum_{i\in\bT_n} \frac{dr_i^n}{dt} \cdot \big[V'_\sigma(r_i) - \tau_i^n - \bst'_\sigma(r_i^n)(r_i - r_i^n)\big]. 
  \end{aligned}
\end{equation}
In particular, the formulas \eqref{yau 1}--\eqref{yau 3} follow from \eqref{general yau 1}, \eqref{general yau 2} by taking $\sigma = \sigma_n$, $\gamma = \gamma_n$ and $(p, r)$ to be the solution $(\fp_n, \fr_n)$ of the hydrodynamic equation \eqref{quasi linear p} for $\sigma = \sigma_n$. 

\section{Entropy and moment inequalities}
\label{appendix:inequalities}

Recall the relative entropy $H(f; \mu)$ in \eqref{entropy} for probability measure $\mu$ and density function $f$ on some measurable space $\Omega$. 
In this appendix we give some classical inequalities related to $H(f; \mu)$. 
We begin from a variational formula of $H(f; \mu)$: 
\begin{equation}
\label{entropy variational}
  H(f; \mu) = \sup_{g\in B_b(\Omega)} \left\{\int_\Omega fgd\mu - \log\int_\Omega e^gd\mu\right\}, 
\end{equation}
where $B_b(\Omega)$ stands for the class of all bounded measurable functions on $\Omega$. 
The proof of \eqref{entropy variational} can be found in \cite[Theorem 4.1]{Varadh88}. 
From \eqref{entropy variational} we immediately get the first lemma. 

\begin{lem}
Let $(\Omega_1, \sF_1, \mu_1)$, $(\Omega_2, \sF_2, \mu_2)$ be two probability spaces. Suppose $f$ to be a density function on $\Omega = \Omega_1 \times \Omega_2$ with respect to $\mu = \mu_1 \otimes \mu_2$, then 
\begin{equation}
\label{entropy inequality 0}
  H(f_1; \mu_1) \le H(f; \mu), 
\end{equation}
where $f_1$ is the density of the marginal distribution of $fd\mu$ on $\Omega_1$. 
\end{lem}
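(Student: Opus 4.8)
The plan is to deduce \eqref{entropy inequality 0} directly from the variational characterization \eqref{entropy variational} by testing against functions on $\Omega$ that only depend on the first coordinate. First I would recall that, by \eqref{entropy variational} applied on $(\Omega_1, \sF_1, \mu_1)$,
\begin{equation*}
  H(f_1; \mu_1) = \sup_{g_1 \in B_b(\Omega_1)} \left\{\int_{\Omega_1} f_1 g_1 d\mu_1 - \log\int_{\Omega_1} e^{g_1}d\mu_1\right\}.
\end{equation*}
Fix an arbitrary $g_1 \in B_b(\Omega_1)$ and regard it as a function on $\Omega = \Omega_1 \times \Omega_2$ by setting $g(\omega_1, \omega_2) = g_1(\omega_1)$; this $g$ is bounded and measurable, hence an admissible test function in \eqref{entropy variational} on $(\Omega, \sF_1 \otimes \sF_2, \mu)$.

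Next I would match the two terms. For the linear term, since $f_1$ is by definition the density of the marginal of $f d\mu$ on $\Omega_1$, we have $\int_{\Omega_1} f_1 g_1 d\mu_1 = \int_\Omega f g \, d\mu$ — this is exactly the defining property of the marginal density together with the fact that $g$ depends only on $\omega_1$. For the logarithmic term, Fubini's theorem and the product structure $\mu = \mu_1 \otimes \mu_2$ give $\int_\Omega e^{g}d\mu = \int_{\Omega_1} e^{g_1}d\mu_1 \cdot \int_{\Omega_2} 1 \, d\mu_2 = \int_{\Omega_1} e^{g_1}d\mu_1$, because $\mu_2$ is a probability measure. Therefore
\begin{equation*}
  \int_{\Omega_1} f_1 g_1 d\mu_1 - \log\int_{\Omega_1} e^{g_1}d\mu_1 = \int_\Omega f g \, d\mu - \log\int_\Omega e^{g}d\mu \le H(f; \mu),
\end{equation*}
where the last inequality is \eqref{entropy variational} on $\Omega$. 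Taking the supremum over $g_1 \in B_b(\Omega_1)$ on the left-hand side yields $H(f_1; \mu_1) \le H(f; \mu)$.

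There is no real obstacle here; the only point requiring a line of care is the identity $\int_{\Omega_1} f_1 g_1 \, d\mu_1 = \int_\Omega f g \, d\mu$ for $g = g_1 \circ \mathrm{pr}_1$, which is immediate from the definition of the marginal density (it holds for indicators $g_1 = \mathbf 1_A$ by definition of $f_1$, hence for simple functions by linearity, hence for all bounded measurable $g_1$ by approximation). One should also note that $f_1$ is well-defined as a density: the marginal of $f d\mu$ on $\Omega_1$ is absolutely continuous with respect to $\mu_1$ because $\mu = \mu_1 \otimes \mu_2$, so its Radon--Nikodym derivative $f_1$ exists. Everything else is a direct substitution into \eqref{entropy variational}.
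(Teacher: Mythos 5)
Your proof is correct and is exactly the argument the paper intends: the lemma is stated as an immediate consequence of the variational formula \eqref{entropy variational}, obtained by restricting the supremum to test functions depending only on the first coordinate, matching the linear term via the definition of the marginal density and the logarithmic term via Fubini and the fact that $\mu_2$ is a probability measure. No gaps.
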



Next we give two inequalities frequently used in this article. 

\begin{lem}
For any measurable subset $A \subseteq \Omega$, 
\begin{equation}
\label{entropy inequality 1}
  \int_A fd\mu \le \frac{H(f; \mu) + \log2}{-\log\mu(A)}. 
\end{equation}
If $X: \Omega \to \bR$ is integrable under $fd\mu$, then for any $\alpha > 0$, 
\begin{equation}
\label{entropy inequality 2}
  \int_\Omega fXd\mu \le \frac1\alpha\left[H(f; \mu) + \log\int_\Omega e^{\alpha X}d\mu\right]. 
\end{equation}
\end{lem}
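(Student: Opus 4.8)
The plan is to obtain both bounds from the Donsker--Varadhan variational characterization \eqref{entropy variational} of the relative entropy, proving \eqref{entropy inequality 2} first and then deducing \eqref{entropy inequality 1} by an optimal choice of test functional.

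For \eqref{entropy inequality 2}, I would first dispose of the trivial cases: if $H(f;\mu) = \infty$, or if $\int_\Omega e^{\alpha X}\,d\mu = \infty$, the right-hand side is $+\infty$ and there is nothing to prove, so assume both quantities finite. For $M > 0$ put $X_M = (X \wedge M) \vee (-M)$, which belongs to $B_b(\Omega)$; applying \eqref{entropy variational} with the bounded test function $g = \alpha X_M$ and dividing by $\alpha > 0$ gives
\[
  \int_\Omega f X_M \, d\mu \le \frac1\alpha\Big[H(f;\mu) + \log\int_\Omega e^{\alpha X_M}\,d\mu\Big].
\]
It then remains to let $M \to \infty$: since $|X_M| \le |X|$ and $X$ is $fd\mu$-integrable, dominated convergence yields $\int f X_M\,d\mu \to \int f X\,d\mu$; on $\{X \ge 0\}$ one has $e^{\alpha X_M} \uparrow e^{\alpha X}$ (monotone convergence), while on $\{X < 0\}$ one has $0 < e^{\alpha X_M} \le 1$ with $e^{\alpha X_M} \to e^{\alpha X}$ (dominated convergence), so $\int e^{\alpha X_M}\,d\mu \to \int e^{\alpha X}\,d\mu \in (0,\infty)$ and the logarithm passes to the limit. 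This gives \eqref{entropy inequality 2}.

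For \eqref{entropy inequality 1}, if $\mu(A) = 1$ the right-hand side is $+\infty$, and if $\mu(A) = 0$ then $fd\mu \ll \mu$ forces $\int_A f\,d\mu = 0$; so assume $0 < \mu(A) < 1$. I would apply \eqref{entropy inequality 2} with $X = \mathbf 1_A$, for which $\int_\Omega e^{\alpha X}\,d\mu = 1 + \mu(A)(e^\alpha - 1) \le 1 + \mu(A)e^\alpha$, and use the elementary estimate $\log(1 + y) \le \log 2 + (\log y)^+$ valid for $y > 0$, with $(\cdot)^+ = \max\{\cdot,0\}$, to get
\[
  \int_A f\,d\mu \le \frac1\alpha\Big[H(f;\mu) + \log 2 + \big(\log\mu(A) + \alpha\big)^+\Big]
\]
for every $\alpha > 0$. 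Choosing $\alpha = -\log\mu(A) > 0$ makes the last term vanish and produces exactly \eqref{entropy inequality 1}. There is no genuine obstacle in this argument; the only step requiring a little care is the truncation-and-limit passage for unbounded $X$ in \eqref{entropy inequality 2}, which is why I would separate out the trivial cases $H(f;\mu) = \infty$ and $\int e^{\alpha X}\,d\mu = \infty$ at the outset.
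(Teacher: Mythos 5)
Your proof is correct and follows essentially the same route as the paper: both inequalities come from the Donsker--Varadhan variational formula \eqref{entropy variational}, and your choice $X=\mathbf 1_A$, $\alpha=-\log\mu(A)$ in \eqref{entropy inequality 2} reproduces exactly the paper's test function $g=-\log(\mu(A))\mathbf 1_A$ for \eqref{entropy inequality 1} (your bound $\log(1+y)\le\log 2+(\log y)^+$ replaces the paper's exact computation $\int e^{g}d\mu=2-\mu(A)\le 2$). The only difference is organizational — you derive \eqref{entropy inequality 1} as a corollary of \eqref{entropy inequality 2} and spell out the truncation $X_M=(X\wedge M)\vee(-M)$ that the paper dismisses as ``a standard approximating argument.''
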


\begin{proof}
Taking $g = -\log(\mu(A))\mathbf1_A$ in \eqref{entropy variational}, we obtain that 
$$
  H(f; \mu) \ge -\log\big(\mu(A)\big)\int_A fd\mu - \log\big(2 - \mu(A)\big), 
$$
and \eqref{entropy inequality 1} follows. 
For \eqref{entropy inequality 2}, if $X$ is bounded, take $g = \alpha X$ to get 
$$
  H(f; \mu) \ge \alpha\int_\Omega fXd\mu - \log\int_\Omega e^{\alpha X}d\mu. 
$$
We can obtain \eqref{entropy inequality 2} via a standard approximating argument. 
\end{proof}

A family of random variables $\{X_i; i = 1, \ldots, m\}$ is said to be $\ell$-independent for some $1 \le \ell \le m$, if for any subset $\Gamma \subseteq \{1, \ldots, m\}$ such that $|i - j| \ge \ell$ for each $i \not= j \in \Gamma$, the sub family $\{X_i; i \in \Gamma\}$ is independent. 
From \eqref{entropy inequality 2} we easily get the next lemma. 

\begin{lem}
If $\{X_1, X_2, \ldots, X_m\}$ is $\ell$-independent, then for any $\alpha > 0$, 
$$
  \left|\int f\sum_{i=1}^m X_id\mu\right| \le\ \frac1\alpha\left[H(f; \mu) + \frac1\ell\sum_{i=1}^m \max\left\{\log\int e^{\pm\alpha\ell X_i}d\mu\right\}\right]. 
$$
\end{lem}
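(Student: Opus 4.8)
The plan is to exploit the $\ell$-independence by grouping the indices into residue classes modulo $\ell$ and then applying the exponential entropy inequality \eqref{entropy inequality 2} separately to each class, where independence turns the logarithmic moment generating function into a sum over the class.

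First I would write $\{1,\dots,m\} = \bigcup_{r=1}^{\ell} \Gamma_r$ with $\Gamma_r = \{i \in \{1,\dots,m\} : i \equiv r \pmod{\ell}\}$ a partition into $\ell$ (possibly empty) classes. Any two distinct elements of a fixed $\Gamma_r$ differ by at least $\ell$, so by the definition of $\ell$-independence the subfamily $\{X_i : i \in \Gamma_r\}$ is independent under $\mu$. Next I would use the identity $\sum_{i=1}^m X_i = \frac1\ell\sum_{r=1}^\ell Z_r$ with $Z_r \triangleq \ell\sum_{i\in\Gamma_r} X_i$, so that by linearity $\int f\sum_{i=1}^m X_i\,d\mu = \frac1\ell\sum_{r=1}^\ell \int f Z_r\,d\mu$ (if the left-hand side is $+\infty$ there is nothing to prove). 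Then for each $r$ I would apply \eqref{entropy inequality 2} with the random variable $Z_r$ and parameter $\alpha$ to get $\int f Z_r\,d\mu \le \alpha^{-1}\big(H(f;\mu) + \log\int e^{\alpha Z_r}\,d\mu\big)$, and invoke independence within $\Gamma_r$ to factorize $\int e^{\alpha Z_r}\,d\mu = \prod_{i\in\Gamma_r}\int e^{\alpha\ell X_i}\,d\mu$, whence $\log\int e^{\alpha Z_r}\,d\mu = \sum_{i\in\Gamma_r}\log\int e^{\alpha\ell X_i}\,d\mu$. Summing over $r=1,\dots,\ell$ and dividing by $\ell$, the $\ell$ copies of $\alpha^{-1}H(f;\mu)$ combine into a single $\alpha^{-1}H(f;\mu)$ and the logarithmic terms reassemble into $\frac1{\alpha\ell}\sum_{i=1}^m \log\int e^{\alpha\ell X_i}\,d\mu$.

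To pass from this one-sided bound to the stated inequality with an absolute value and the $\pm$ maximum, I would run the identical argument with $-X_i$ in place of $X_i$ (the family $\{-X_i\}$ is again $\ell$-independent), yielding the companion bound for $-\int f\sum_i X_i\,d\mu$ with $e^{-\alpha\ell X_i}$ replacing $e^{\alpha\ell X_i}$. Since termwise $\log\int e^{\pm\alpha\ell X_i}\,d\mu \le \max\{\log\int e^{+\alpha\ell X_i}\,d\mu,\ \log\int e^{-\alpha\ell X_i}\,d\mu\}$, both one-sided bounds are dominated by $\alpha^{-1}\big(H(f;\mu) + \frac1\ell\sum_{i=1}^m \max\{\log\int e^{\pm\alpha\ell X_i}\,d\mu\}\big)$, and taking the maximum of the two gives the claim.

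There is no genuine obstacle here: the statement is essentially a bookkeeping consequence of \eqref{entropy inequality 2} together with independence. The only two points that warrant a line of care are the verification that residue classes modulo $\ell$ form independent families (immediate from the definition, since distinct indices within one class lie at distance at least $\ell$) and the sign manipulation producing the absolute value and the $\pm$ maximum; both are routine.
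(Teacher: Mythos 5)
Your proposal is correct and follows essentially the same route as the paper: partition the indices into residue classes modulo $\ell$ (within which the variables are independent), apply the entropy inequality \eqref{entropy inequality 2} to each class with the parameter scaled by $\ell$, factorize the exponential moment by independence, average over the $\ell$ classes, and repeat with $-X_i$ to obtain the absolute value and the $\pm$ maximum. No gaps.
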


\begin{proof}
For $k = 0$, $1$, ..., $\ell - 1$, let $\Gamma_k = \{k + i\ell; 1 \le i \le (m - k)/\ell\}$. 
Since $\{X_i, i \in \Gamma_k\}$ is independent, \eqref{entropy inequality 2} yields that 
$$
  \int f\sum_{i\in\Gamma_k} X_id\mu \le \frac1{\alpha\ell}\left[H(f; \mu) + \sum_{i\in\Gamma_k} \log\int e^{\alpha\ell X_i}d\mu\right]. 
$$
Taking summation over $k \in \{0, 1, \ldots, \ell - 1\}$, we get that 
\begin{equation}
\label{entropy inequality 3}
  \int f\sum_{i=1}^m X_id\mu \le \frac1\alpha\left[H(f; \mu) + \frac1\ell\sum_{i=1}^m \log\int e^{\alpha\ell X_i}d\mu\right]. 
\end{equation}
The proof is completed by repeating the argument with $-X_i$ instead of $X_i$. 
\end{proof}

Taking $A = \{|X|>\lambda\}$ in \eqref{entropy inequality 1} gives us tail estimates of $X$. 
The following result makes it possible to get moment bounds of $X$ from tail estimates. 
It has been used in the proof of Corollary \ref{cor:general quantitative hl} and the tightness of the fluctuation field. 

\begin{lem}
\label{lem:moment}
Suppose a constant $C > 0$, some $q > 1$ such that 
$$
  P(|X| > \lambda) \le C\lambda^{-q}, \quad \forall \lambda > 0. 
$$
Then, for any $q \in [1, p)$, there exists a constant $K_{p,q} > 1$, such that 
$$
  E \big[|X|^p\big] \le K_{p,q}C^{\frac pq}. 
$$
\end{lem}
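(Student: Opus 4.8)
The plan is to use the layer-cake representation of the moment together with the tail hypothesis, splitting the integral at a threshold that balances the two regimes. Throughout I write $q$ for the tail exponent in the hypothesis and $p$ for the moment order, with $1 \le p < q$ (this is the intended reading of the statement). Starting from $E[|X|^p] = \int_0^\infty P(|X|^p > t)\,dt$ and substituting $t = \lambda^p$, Fubini's theorem gives
$$
  E\big[|X|^p\big] = \int_0^\infty p\lambda^{p-1}\,P(|X| > \lambda)\,d\lambda .
$$

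Next, for a parameter $\lambda_0 > 0$ to be chosen, I would split this integral at $\lambda_0$. On $(0,\lambda_0)$ I use the trivial bound $P(|X|>\lambda) \le 1$, which contributes at most $\int_0^{\lambda_0} p\lambda^{p-1}\,d\lambda = \lambda_0^{\,p}$. On $(\lambda_0,\infty)$ I use the hypothesis $P(|X|>\lambda) \le C\lambda^{-q}$ together with the integrability exponent $p-1-q < -1$, which gives
$$
  \int_{\lambda_0}^\infty p\lambda^{p-1}\,C\lambda^{-q}\,d\lambda = \frac{pC}{q-p}\,\lambda_0^{\,p-q}.
$$
Adding the two contributions yields $E[|X|^p] \le \lambda_0^{\,p} + \frac{pC}{q-p}\,\lambda_0^{\,p-q}$.

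Finally, I would optimize in $\lambda_0$, or simply take $\lambda_0 = C^{1/q}$, so that $\lambda_0^{\,p} = C^{p/q}$ and $\lambda_0^{\,p-q} = C^{p/q}\,C^{-1}$; this gives
$$
  E\big[|X|^p\big] \le \Big(1 + \frac{p}{q-p}\Big)C^{p/q} = \frac{q}{q-p}\,C^{p/q},
$$
which is the claimed bound with $K_{p,q} = q/(q-p)$, and indeed $K_{p,q} > 1$ since $p \ge 1 > 0$. There is no genuine obstacle in this argument: the only points requiring care are the convergence of the upper-tail integral — which is precisely where the assumption $p < q$ enters — and the choice of threshold $\lambda_0$, which is dictated by equating the orders of the two contributions in $C$.
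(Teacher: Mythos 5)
Your proof is correct and is essentially identical to the paper's: the same layer-cake formula, the same splitting of the integral at the threshold $C^{1/q}$, and the same constant $K_{p,q} = q/(q-p)$. You also correctly read the statement's typo ($q \in [1,p)$ should be $p \in [1,q)$), which matches how the paper actually uses the lemma.
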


\begin{proof}
Using the integration-by-parts formula, for all $1 \le p < q$, 
\begin{align*}
  E \big[|X|^p\big] &\le \int_0^\infty \frac d{d\lambda}\big(\lambda^p\big)P(|X| \ge \lambda)d\lambda \\
  &\le \int_{0\le\lambda<C^{\frac1q}} p\lambda^{p-1}d\lambda + C\int_{\lambda\ge C^{\frac1q}} p\lambda^{p-1-q}d\lambda \le \frac{q}{q - p}C^{\frac pq}, 
\end{align*}
Thus, the lemma holds with $K_{p,q} = q/(q - p) > 1$. 
\end{proof}

\section{Sub-Gaussian random variable}
\label{appendix:subgaussian}

Recall that a real random variable $X$, is called sub-Gaussian of order $\sigma^2$, if 
\begin{equation}
  \log E \big[e^{sX}\big] \le \frac{\sigma^2s^2}2, \quad \forall s \in \bR. 
\end{equation}
There is an elementary but useful condition for sub-Gaussian property. 

\begin{lem}[$\phi_2$ condition]
\label{lem:orlicz}
If $E [X] = 0$, and 
\begin{equation}
\label{orlicz}
  E \big[e^{cX^2}\big] \le C, 
\end{equation}
for some $c > 0$ and $C \ge 1$, then $X$ is sub-Gaussian of order $2Cc^{-1}$. 
\end{lem}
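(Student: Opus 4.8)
The plan is to prove the bound $\log E[e^{sX}] \le Cs^2/c$ for every $s \in \bR$, which is exactly the sub-Gaussian estimate of order $2Cc^{-1}$ (note $Cs^2/c = \tfrac12(2Cc^{-1})s^2$). Since hypothesis \eqref{orlicz} only controls $E[e^{tX^2}]$ for $t \le c$, I will argue separately in the ranges $s^2 \le c$ and $s^2 > c$, and the centering hypothesis $E[X]=0$ will be used to get a genuinely quadratic bound near the origin.

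The key elementary fact is that $e^y \le y + e^{y^2}$ for all $y \in \bR$. To see it, set $g(y) = y + e^{y^2} - e^y$, so that $g(0) = 0$ and $g'(0) = 1 + 0 - 1 = 0$; moreover $g''(y) = (2 + 4y^2)e^{y^2} - e^y \ge 0$, because this is equivalent to $\log(2 + 4y^2) + y^2 \ge y$, and $\log(2 + 4y^2) \ge \log 2 > \tfrac14 \ge y - y^2$ when $y \in [0,1]$, while $y - y^2 \le 0 < \log(2 + 4y^2)$ when $y \notin [0,1]$. Hence $g$ is convex and attains its minimum at the origin, giving $g \ge 0$.

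When $s^2 \le c$, I apply this inequality with $y = sX$ and take expectations; as $E[X] = 0$ this gives $E[e^{sX}] \le E[e^{s^2X^2}]$. Writing $e^{s^2X^2} = (e^{cX^2})^{s^2/c}$, the concavity of $t \mapsto t^{s^2/c}$ on $[0,\infty)$ (the exponent lies in $(0,1]$) together with Jensen's inequality and \eqref{orlicz} yields $E[e^{s^2X^2}] \le \big(E[e^{cX^2}]\big)^{s^2/c} \le C^{s^2/c}$, whence $\log E[e^{sX}] \le (s^2/c)\log C \le Cs^2/c$, using $\log C \le C$. When $s^2 > c$, the variable $e^{s^2X^2}$ need no longer be integrable, so instead I use Young's inequality $sX \le s^2/(4c) + cX^2$ to obtain $E[e^{sX}] \le e^{s^2/(4c)}E[e^{cX^2}] \le Ce^{s^2/(4c)}$, i.e. $\log E[e^{sX}] \le \log C + s^2/(4c)$; this is bounded by $Cs^2/c$ because $(C - \tfrac14)s^2/c > C - \tfrac14 \ge \log C$, using $s^2 > c$ and $\log C \le C - 1$ for $C \ge 1$. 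Combining the two ranges proves the lemma.

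The only step that is not completely mechanical is verifying $e^y \le y + e^{y^2}$ on a bounded neighbourhood of the origin, where neither the fast growth of $e^{y^2}$ nor the crude bound $e^{y^2} \ge 1$ is enough and one genuinely needs the linear term $y$ (equivalently, one genuinely needs $E[X]=0$); the short convexity computation above settles this uniformly in $y$. Everything else reduces to bookkeeping with the inequality $\log C \le C$.
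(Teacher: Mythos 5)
Your proof is correct, but it follows a different route from the paper's. The paper proves the bound for all $s$ in one pass: it expands $E[e^{sX}]$ in a Taylor series, uses $E[X]=0$ to kill the linear term, bounds the tail by $\tfrac{s^2}{2}E[X^2e^{|sX|}]$, and then reduces to the hypothesis \eqref{orlicz} via the pointwise bounds $|sX| \le \tfrac{c}{2}X^2 + \tfrac{s^2}{2c}$ and $ye^y \le e^{2y}$, finishing with $1 + \tfrac{Cs^2}{c}e^{s^2/(2c)} \le e^{Cs^2/c}$. You instead split into the ranges $s^2 \le c$ and $s^2 > c$: near the origin you use the pointwise inequality $e^y \le y + e^{y^2}$ (correctly verified by convexity of $y + e^{y^2} - e^y$) together with Jensen's inequality for the concave map $t \mapsto t^{s^2/c}$, and far from the origin you use Young's inequality directly; the bookkeeping with $\log C \le C$ and $\log C \le C - \tfrac14$ checks out in both ranges, and $E[X]=0$ is used exactly where it must be. Your argument is essentially the standard textbook derivation of the moment-generating-function bound from a $\phi_2$-type condition, and its case split makes transparent where the centering enters; the paper's argument is a single series computation that avoids the case distinction at the cost of the slightly less obvious auxiliary inequalities. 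Both yield the stated order $2Cc^{-1}$, so there is nothing to fix.
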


\begin{proof}
Since $E [X] = 0$, we have for any $s \in \bR$ that 
$$
  E \big[e^{sX}\big] = 1 + \sum_{k=2}^\infty \frac{E [(sX)^k]}{k!} \le 1 + \frac{s^2}2\sum_{k=0}^\infty \frac{|s|^kE [|X|^{k+2}]}{k!}. 
$$
The summation in the right-hand side is bounded by 
$$
  \frac{s^2}2E \big[X^2e^{|sX|}\big] \le \frac{s^2}2E \left[X^2\exp\left\{\frac {cX^2}2 + \frac{s^2}{2c}\right\}\right] 
$$
for any $c > 0$. 
With the elementary inequality $ye^y \le e^{2y}$, 
$$
  \frac{s^2}2E \left[X^2\exp\left\{\frac {cX^2}2 + \frac{s^2}{2c}\right\}\right] \le \frac{s^2}c\exp\left\{\frac{s^2}{2c}\right\}E \big[e^{cX^2}\big]. 
$$
Hence, by the condition \eqref{orlicz}, 
$$
  E \big[e^{sX}\big] \le 1 + \frac{Cs^2}c\exp\left\{\frac{s^2}{2c}\right\} \le \exp\left\{\frac{Cs^2}c\right\}. 
$$
As $s$ is arbitrary, the proof is completed. 
\end{proof}

Recall that in Lemma \ref{lem:main lemma} we need to bound the exponential integral of the absolute value of a sub-Gaussian variable. 
The general estimate is as follows. 

\begin{lem}
\label{lem:subgaussian absolute}
If $X$ is sub-Gaussian of order $\sigma^2$, then 
$$
  E \big[e^{s|X|}\big] \le \frac{1 + |s|}{1 - |s|}\exp\left\{\frac{\sigma^2|s|}2\right\}, \quad \forall s \in (-1, 1). 
$$
\end{lem}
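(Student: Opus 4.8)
The plan is to reduce the estimate on $E[e^{s|X|}]$ to the sub-Gaussian bounds on $E[e^{sX}]$ and $E[e^{-sX}]$ by controlling $|X|$ through $\max\{X,-X\}$, and then to gain the rational prefactor $(1+|s|)/(1-|s|)$ by a more careful expansion rather than the crude bound $e^{s|X|}\le e^{sX}+e^{-sX}$. First I would assume without loss of generality that $s\ge 0$, since $|X|$ and $|{-X}|$ have the same law in the relevant sense and the statement is symmetric in $s\mapsto -s$; and I may also assume $\sigma^2=1$ after rescaling $X\mapsto X/\sigma$ and $s\mapsto \sigma s$, which leaves the claimed inequality unchanged. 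The hypothesis becomes $\log E[e^{tX}]\le t^2/2$ for all $t\in\mathbb R$, equivalently $E[e^{tX}]\le e^{t^2/2}$.

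The key computational step is to expand $e^{s|X|}$ using $|X|^k = $ either $X^k$ or $(-X)^k$ depending on the sign of $X$, and to bound term by term:
\begin{align*}
  E\big[e^{s|X|}\big]
  &= 1 + \sum_{k=1}^\infty \frac{s^k}{k!}E\big[|X|^k\big]
  \le 1 + \sum_{k=1}^\infty \frac{s^k}{k!}\Big(E\big[X^k\big] + E\big[(-X)^k\big]\Big) \\
  &= \sum_{k=0}^\infty \frac{s^k}{k!}E\big[X^k\big] + \sum_{k=1}^\infty \frac{s^k}{k!}E\big[(-X)^k\big]
  = E\big[e^{sX}\big] + E\big[e^{-sX}\big] - 1,
\end{align*}
where in the first inequality I use $E[|X|^k]\le E[X^k\mathbf 1_{X\ge 0}] + E[(-X)^k\mathbf 1_{X\le 0}]\le E[X^k]+E[(-X)^k]$ valid because $X^k\ge 0$ on $\{X\ge0\}$ and $(-X)^k\ge0$ on $\{X\le0\}$, while the discarded pieces $E[X^k\mathbf 1_{X<0}]$ and $E[(-X)^k\mathbf 1_{X>0}]$ are $\le 0$ when... — this is exactly where care is needed. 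The crude bound only gives $E[e^{s|X|}]\le 2e^{s^2/2}$, which is weaker than the asserted $(1+s)/(1-s)\,e^{s/2}$ for small $s$ but the two are not comparable for all $s$; so to get the stated form I would instead split $E[e^{s|X|}] = E[e^{s|X|}\mathbf 1_{X\ge0}] + E[e^{s|X|}\mathbf 1_{X<0}]$, use Markov's inequality $P(X>\lambda)\le e^{-\lambda^2/2}$ together with $E[e^{s|X|}\mathbf 1_{X\ge0}] = \int_0^\infty s e^{s\lambda}P(X\ge\lambda)\,d\lambda + P(X\ge0)$, and estimate $\int_0^\infty s e^{s\lambda}e^{-\lambda^2/2}\,d\lambda$.

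For that integral, complete the square: $s\lambda - \lambda^2/2 = s^2/2 - (\lambda-s)^2/2$, so $\int_0^\infty s e^{s\lambda - \lambda^2/2}\,d\lambda = s e^{s^2/2}\int_{-s}^\infty e^{-u^2/2}\,du$, which is crudely bounded; but for the rational prefactor I expect the cleanest route is actually the Taylor route refined: write $E[e^{sX}]=\sum s^k E[X^k]/k!$ and use the sub-Gaussian moment bound $E[|X|^k]\le 2(k/e)^{k/2}\cdot$const, or better, plug $E[e^{sX}]\le e^{s^2/2}$ directly and bound $\sum_{k\ge1}\frac{s^k}{k!}E[|X|^k]$ by recognizing $\sum_{k\ge 1}s^k = s/(1-s)$ after pulling out a geometric factor from moment growth. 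The main obstacle will be getting precisely the factor $(1+|s|)/(1-|s|)$ rather than some other rational function — the exponent $\sigma^2|s|/2$ (note: $|s|$, not $s^2$) strongly suggests the intended proof tracks the linear-in-$s$ part of $\log E[e^{sX}]$ separately from a geometric tail, and I would follow the expansion $E[e^{s|X|}]\le E[e^{sX}]+E[e^{-sX}]-1$ then bound each exponential moment not by $e^{s^2/2}$ but by $1 + \frac{s}{2} + \sum_{k\ge 2}\frac{s^k}{k!}E[X^k]$ and sum the tail as a geometric-type series using $|E[X^k]|\le k!\,(2\sigma^2)^{k/2}$-style bounds; reconciling the bookkeeping so the $(1\pm|s|)$ factors emerge exactly is the delicate part, everything else is routine.
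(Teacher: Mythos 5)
The proposal does not reach the stated bound: you explore three routes (the crude $e^{s|X|}\le e^{sX}+e^{-sX}$, a tail-integral split over $\{X\ge 0\}$ and $\{X<0\}$, and a Taylor/moment expansion), correctly note that the first gives only $2e^{\sigma^2s^2/2}$ and that the second, after completing the square, still produces an $e^{s^2/2}$-type factor rather than $e^{\sigma^2|s|/2}$, and then explicitly leave unresolved "the delicate part" of producing the factor $(1+|s|)/(1-|s|)$. That delicate part is the entire content of the lemma, so as written this is a gap, not a proof.

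The missing idea is a two-step reduction. First, the elementary inequality $|X|\le \frac{X^2}{2\sigma^2}+\frac{\sigma^2}{2}$ gives, for $s\ge 0$,
$$
  E\big[e^{s|X|}\big]\le e^{\sigma^2 s/2}\,E\Big[\exp\Big\{\tfrac{s}{2\sigma^2}X^2\Big\}\Big],
$$
which is where the linear-in-$s$ exponent $\sigma^2|s|/2$ comes from — you correctly suspected the exponent "tracks a linear part," but the mechanism is this pointwise AM--GM split, not a separation of the linear term in $\log E[e^{sX}]$. Second, the quadratic exponential moment is controlled by the Chernoff tail bound $P(|X|\ge\lambda)\le 2e^{-\lambda^2/(2\sigma^2)}$ together with integration by parts:
$$
  E\big[e^{tX^2}\big]\le 1+\int_0^\infty 2t\lambda e^{t\lambda^2}P(|X|\ge\lambda)\,d\lambda\le \frac{1+2t\sigma^2}{1-2t\sigma^2},\qquad 0\le t<\frac1{2\sigma^2}.
$$
Taking $t=s/(2\sigma^2)$ yields exactly the prefactor $(1+s)/(1-s)$ and explains the restriction $|s|<1$. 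Your tail-integral idea was pointed in the right direction but applied to the wrong quantity ($e^{s\lambda}$ against $P(X\ge\lambda)$ instead of $e^{t\lambda^2}$ against $P(|X|\ge\lambda)$), which is why it could not produce the rational factor; the Taylor/geometric-series speculation at the end does not converge to a valid argument.
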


\begin{proof}
By Chernoff's method, for any $\lambda > 0$, 
$$
  P(X \ge \lambda) \le E \left[\exp\left\{\frac{\lambda(X - \lambda)}{\sigma^2}\right\}\right] \le \exp\left\{\frac{\lambda^2}{2\sigma^2} - \frac{\lambda^2}{\sigma^2}\right\} = \exp\left\{- \frac{\lambda^2}{2\sigma^2}\right\}. 
$$
Since similar estimate holds for $P(X \le -\lambda)$, 
\begin{equation}
\label{subgaussian tail}
  P(|X| \ge \lambda) \le 2\exp\left\{-\frac{\lambda^2}{2\sigma^2}\right\}. 
\end{equation}
For $0 \le t < 1/(2\sigma^2)$, the integration-by-parts formula yields that 
$$
  E \big[e^{tX^2}\big] \le 1 + \int_0^\infty \frac d{d\lambda} \big(e^{t\lambda^2}\big)P(|X| \ge \lambda)d\lambda \le \frac{1 + 2t\sigma^2}{1 - 2t\sigma^2}. 
$$
Hence, for any $s \in [0, 1)$, 
$$
  E \big[e^{s|X|}\big] \le E \left[\exp\left\{\frac{sX^2}{2\sigma^2} + \frac{\sigma^2s}2\right\}\right] \le \frac{1 + s}{1 - s}\exp\left\{\frac{\sigma^2s}2\right\}. 
$$
The case $s \in (-1, 0)$ holds similarly. 
\end{proof}

\section{Local central limit theorem}
\label{appendix:local clt}

In this appendix, we state a local central limit theorem with expansions for the sum of independent, non-identically distributed random variables. 
It is used in the proof of equivalence of ensembles in Section \ref{sec:ee}. 

We work under the following setting. 
Suppose that $\pi$ is some Borel measure on $\bR$, and $f: \bR \to \bR$ is an integrable function. 
Assume for all $\tau \in \bR$ that 
$$
  G(\tau) \triangleq \log\int_\bR e^{\tau f(r)}\pi(dr) < \infty. 
$$
Denote by $\pi_\tau$ the tilted probability measure on $\bR$, given by 
$$
  \pi_\tau(dr) = \exp\{\tau f-G(\tau)\}\pi(dr). 
$$
Let $\Phi_\tau(\xi) = \int \exp\{i\xi(f - \int fd\pi_\tau)\}\pi_\tau(dr)$ be the characteristic function of $f$. 
For all $K > 0$, we assume the following conditions with a constant $M_K$: 
\begin{itemize}
\item[(\romannum1)] $G$ is four times differentiable on $\bR$, and 
$$
  G''(\tau) > M_K^{-1},\ \big|G^{(\ell)}(\tau)\big| < M_K,\ \forall \tau \in [-K, K],\ \ell = 0, 1, 2, 3, 4. 
$$
\item[(\romannum2)] $|\Phi_\tau(\xi)| < M_K(1 + |\xi|)^{-1}$ for all $\xi \in \bR$ and $\tau \in [-K, K]$; 
\item[(\romannum3)] $\Phi_\tau$ is four times differentiable on $\bR$ for all $\tau \in \bR$, and 
$$
  \forall \epsilon > 0,\ \exists \delta = \delta(\epsilon, K) > 0,\ \text{s.t.}\ \big|\Phi_\tau^{(\ell)}(\xi) - \Phi_\tau^{(\ell)}(0)\big| < \epsilon, 
$$
for all $|\xi| < \delta$, $\tau \in [-K, K]$ and $\ell = 0$, $1$, $2$, $3$, $4$. 
\end{itemize}

Given $\vt = (\tau_1, \ldots, \tau_n)$, define the inhomogeneous product measure 
$$
  \mu_n(d\bfr) = \mu_n(\vt; d\bfr) = \prod_{j=1}^n \pi_{\tau_j}(dr_i), \quad \bfr = (r_1, \ldots, r_n) \in \bR^n. 
$$
Define $u_\ell = u_\ell(\vt) > 0$ for $\ell = 1$, $2$, $3$, $4$ via the formula 
$$
  |u_\ell|^\ell = \frac1n\sum_{j=1}^n G^{(\ell)}(\tau_j). 
$$
Observe that $u_1 = E_{\mu_n}[\bar r]$ and $u_2^2 = E_{\mu_n} [(\bar r - u_1)^2]$, where $\bar r = n^{-1}\sum r_j$. 

The local central limit theorem is stated as follows. 
Let $\phi$ be the standard Gaussian density, and $\{H_j; j \ge 0\}$ be the group of Hermite polynomials: 
$$
  \phi(x) = \frac1{\sqrt{2\pi}}e^{-\frac{x^2}2}, \quad H_j(x) = (-1)^j e^{\frac{x^2}2}\frac{d^j}{dx^j}\left[e^{-\frac{x^2}2}\right]. 
$$
In particular, $H_3 = x^3 - 3x$, $H_4 = x^4 - 6x^2 +3$ and $H_6 = x^6 - 15x^4 +45 x^2 - 15$. 

\begin{lem}
\label{lem:local clt}
Assume that $\tau_j \in [-K, K]$ for all $1 \le j \le n$. 
Let $g_n(\vt; \cdot)$ be the density function with respect of $\mu_n$ of the random variable 
$$
  \frac1{u_2(\vt)\sqrt n}\sum_{j=1}^n \big(r_j - u_1(\vt)\big). 
$$
For any $\epsilon > 0$, there exists $N = N(\epsilon, K, M_K)$ sufficiently large, such that if $n \ge N$, then the following estimate holds uniformly for $x \in \bR$: 
$$
  \left|g_n(\vt; x) - \phi(x)\left[1 + \frac1{\sqrt n}Q_{n,1}(x) + \frac1nQ_{n,2}(x)\right]\right| < \frac Cn\left(\epsilon + \frac1{\sqrt n}\right) 
$$
where $C = C(M_K)$ is a constant and $Q_{n,1}$, $Q_{n,2}$ are given by 
$$
  Q_{n,1} = \frac1{3!}\left(\frac{u_3}{u_2}\right)^3H_3, \quad Q_{n,2} = \frac1{4!}\left(\frac{u_4}{u_2}\right)^4H_4 + \frac1{2(3!)^2}\left(\frac{u_3}{u_2}\right)^6H_6. 
$$
\end{lem}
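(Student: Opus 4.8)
The plan is to run the classical Fourier/Edgeworth argument, tracking every constant through $K$ and $M_K$. Write $S_n = (u_2\sqrt n)^{-1}\sum_{j=1}^n(r_j - u_1)$; since $\sum_j E_{\pi_{\tau_j}}[r] = \sum_j G'(\tau_j) = n u_1$, the characteristic function of $S_n$ under $\mu_n$ factorizes as $\psi_n(\xi) = \prod_{j=1}^n \Phi_{\tau_j}\big(\xi/(u_2\sqrt n)\big)$. By condition (ii), $|\psi_n(\xi)| \le \prod_j M_K\big(1 + |\xi|/(u_2\sqrt n)\big)^{-1}$, which is integrable for $n \ge 2$ (note $u_2^2 \in [M_K^{-1}, M_K]$ by (i)); hence $g_n(\vt;\cdot)$ is continuous and $g_n(\vt;x) = (2\pi)^{-1}\int_{\bR}e^{-i\xi x}\psi_n(\xi)\,d\xi$. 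I will split this integral over $|\xi|\le T_n$, over $T_n < |\xi| \le \delta_0 u_2\sqrt n$, and over $|\xi| > \delta_0 u_2\sqrt n$, where $T_n = n^{1/10}$ and $\delta_0 = \delta_0(\epsilon,K) > 0$ is fixed small below.

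On the central region set $\eta = \xi/(u_2\sqrt n)$, so $|\eta| \le \delta_0$. As $\Phi_\tau(0) = 1$ and $\Phi_\tau \in C^4$ uniformly on $[-K,K]$ by (iii), $\log\Phi_\tau$ is $C^4$ near $0$, its Taylor coefficients at $0$ are the cumulants $\kappa_\ell(\tau) = G^{(\ell)}(\tau)$ of $r$ under $\pi_\tau$, and a fourth-order Taylor expansion with Lagrange remainder, summed over $j$ and using $n^{-1}\sum_j\kappa_{\ell,j} = u_\ell^\ell$, gives
\[
  \log\psi_n(\xi) = -\frac{\xi^2}{2} + \frac{u_3^3}{6u_2^3}\frac{(i\xi)^3}{\sqrt n} + \frac{u_4^4}{24u_2^4}\frac{(i\xi)^4}{n} + R_n(\xi),
\]
where, bounding the fourth derivative of $\log\Phi_{\tau_j}$ at the intermediate point by its value at $0$ up to $C(M_K)\epsilon$ via condition (iii) (valid once $\delta_0$ is small, depending on $\epsilon,K$), one has $|R_n(\xi)| \le C(M_K)\,\epsilon\,\xi^4/n$. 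Exponentiating and expanding to order $1/n$ — legitimate on $|\xi| \le T_n$, where $\log\psi_n(\xi) + \xi^2/2 \to 0$ — yields $\psi_n(\xi) = e^{-\xi^2/2}\big(1 + \tfrac{u_3^3}{6u_2^3}\tfrac{(i\xi)^3}{\sqrt n} + \tfrac1n\big[\tfrac{u_4^4}{24u_2^4}(i\xi)^4 + \tfrac12(\tfrac{u_3^3}{6u_2^3})^2(i\xi)^6\big]\big) + E_n(\xi)$ with $e^{\xi^2/2}|E_n(\xi)| \le C(1+|\xi|)^9(n^{-3/2} + \epsilon n^{-1})$ on that range. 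Inserting into the inversion integral over $|\xi| \le T_n$ and using the identity $(2\pi)^{-1}\int_{\bR} e^{-i\xi x}(i\xi)^j e^{-\xi^2/2}\,d\xi = H_j(x)\phi(x)$ reproduces exactly $\phi(x)\big[1 + n^{-1/2}Q_{n,1}(x) + n^{-1}Q_{n,2}(x)\big]$, since the coefficients match $\tfrac1{3!}(u_3/u_2)^3$, $\tfrac1{4!}(u_4/u_2)^4$ and $\tfrac1{2(3!)^2}(u_3/u_2)^6$; the contribution of $E_n$ is at most $C(n^{-3/2} + \epsilon n^{-1})\int_{\bR}(1+|\xi|)^9 e^{-\xi^2/2}\,d\xi \le Cn^{-1}(\epsilon + n^{-1/2})$ uniformly in $x$, and the discarded Gaussian tail $\int_{|\xi|>T_n}$ is $O(e^{-cn^{1/5}})$.

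It remains to show the two outer regions are negligible. For $T_n < |\xi| \le \delta_0 u_2\sqrt n$ the argument $\eta$ is still small, and condition (i) ($G'' \ge M_K^{-1}$) together with (iii) gives $|\Phi_\tau(\eta)| \le e^{-c\eta^2}$ with a fixed $c = c(M_K) > 0$ once $\delta_0$ is small, so $|\psi_n(\xi)| \le e^{-cn\eta^2} = e^{-c\xi^2/u_2^2}$ and this band contributes $O(e^{-cn^{1/5}})$. For $|\xi| > \delta_0 u_2\sqrt n$, i.e. $|\eta| > \delta_0$: condition (ii) forces $\Phi_\tau(\xi) \to 0$, so each $\pi_\tau$ is non-lattice and $|\Phi_\tau(\eta)| < 1$ for all $\eta \ne 0$; by continuity in $(\tau,\eta)$ and compactness, $\rho := \sup\{|\Phi_\tau(\eta)| : \tau \in [-K,K],\ \delta_0 \le |\eta| \le M_K\} < 1$, while for $|\eta| > M_K$ condition (ii) gives $|\Phi_\tau(\eta)| \le M_K(1+|\eta|)^{-1} < 1$; keeping two of the $n$ factors for integrability of $(1+|\eta|)^{-2}$, this region contributes at most $C\sqrt n\,(\rho \vee \tfrac{M_K}{1+M_K})^{\,n-2}$, which is exponentially small. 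Adding the three regions and choosing $N(\epsilon,K,M_K)$ so that every exponentially small term is $\le n^{-2}$ for $n \ge N$ gives the claimed bound, uniformly in $x$.

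The genuinely delicate point is uniformity: the $\tau_j$ are all different and moreover change with $n$, so every constant appearing — the $C^4$-remainder in the Taylor expansion of $\log\Phi_{\tau_j}$, the exponent $c$ in the Gaussian bound, the sub-unit constant $\rho$ on the intermediate range, the $(1+|\eta|)^{-1}$ decay — must depend only on $K$ and $M_K$, which is precisely what conditions (i)–(iii) guarantee; the secondary point requiring care is the choice of the cut-off $T_n = n^{1/10}$, large enough to make the Gaussian tail $\int_{|\xi|>T_n}$ negligible yet small enough that the expansion of the exponential on $|\xi| \le T_n$ is valid with a $(1+|\xi|)^9$-polynomial remainder.
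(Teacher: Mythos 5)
Your proof is correct and follows essentially the same route as the paper's (which sketches Feller's argument): Fourier inversion, a uniform fourth-order Taylor expansion of $\log\Phi_{\tau_j}$ via conditions (i)--(iii) producing exactly the polynomials $Q_{n,1}$, $Q_{n,2}$ through the Hermite identity, and a frequency-space splitting with exponentially small tail contributions. The only substantive variation is on the outer region $|\xi|>\delta_0 u_2\sqrt n$, where you replace the paper's direct (and rather loose) use of condition (ii) by a compactness/non-lattice bound $\rho<1$ --- arguably more careful, though note that $\rho$, and hence your $N$, then depends on the family $\{\pi_\tau\}$ itself and not only on $(\epsilon, K, M_K)$ as the lemma's statement nominally requires.
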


Lemma \ref{lem:local clt} can be proved following \cite[Theorem \Romannum{16}.2.2, pp. 535]{Feller71}. 
Here we briefly sketch the proof to emphasize the dependence of $(N, C)$ on $\epsilon$, $K$ and $M_K$. 

\begin{proof}
By the definition of characteristic function $\Phi_\tau$, 
$$
  g_n(\vt; x) = \frac1{2\pi}\int_\bR e^{-ix\xi}\prod_{j=1}^n \Phi_{\tau_j}\left(\frac\xi{u_2\sqrt n}\right)d\xi. 
$$
Let us define $\Delta_n = \Delta_n(\vt; \xi)$ for each $\xi \in \bR$ by 
$$
  \Delta_n(\vt; \xi) = \prod_{j=1}^n \Phi_{\tau_j}\left(\frac\xi{u_2\sqrt n}\right) - \exp\left\{-\frac{\xi^2}2\right\}\left[1 + P_n(i\xi) + \frac12P_n^2(i\xi)\right], 
$$
where $P_n = P_n(\vt; \cdot)$ is the polynomial given by 
$$
  P_n = \frac1{3!\sqrt n}\left(\frac{u_3}{u_2}\right)^3x^3 + \frac1{4!n}\left(\frac{u_4}{u_2}\right)^4x^4. 
$$
From the definition of Hermite polynomials, it suffices to prove that 
$$
  \int_\bR \big|\Delta_n(\vt; \xi)\big|d\xi \le \frac Cn\left(\epsilon + \frac1{\sqrt n}\right). 
$$
For any $\epsilon > 0$, Taylor's theorem yields that there is $\delta = \delta(\epsilon, K) > 0$, such that 
$$
  \left|\log\Phi_\tau(\xi) + \frac{G''(\tau)}2\xi^2 - \sum_{\ell=3}^4 \frac1{\ell!}G^{(\ell)}(\tau)(i\xi)^\ell\right| < \epsilon\xi^4, 
$$
for all $|\xi| < \delta$ and $\tau \in [-K, K]$. 
Therefore, when $|\xi| < \delta u_2\sqrt n$, 
$$
  \left|\sum_{j=1}^n \log\Phi_{\tau_j}\left(\frac\xi{u_2\sqrt n}\right) + \frac{\xi^2}2 - P_n(i\xi)\right| < \frac{\epsilon\xi^4}{u_2^4n}. 
$$
Without loss of generality we can choose $\delta < 1$, so that 
$$
  |P_n(i\xi)| 
  < \left(\frac{u_3^3}{3!u_2^3} + \frac{\delta u_4^4}{4!u_2^3}\right)\frac{|\xi|^3}{\sqrt n} < \frac{C_1|\xi|^3}{\sqrt n}, 
$$
with some $C_1 = C_1(M_K)$. 
Using the elementary inequality 
$$
  \left|e^x - 1 - x' - \frac{(x')^2}2\right| \le e^{\max\{|x|,|x'|\}}\big(|x - x'| + |x'|^3\big), \quad \forall x, x' \in \bR, 
$$
we obtain that when $|\xi| < \delta u_2\sqrt n$, 
$$
  \big|\Delta_n(\vt, \xi)\big| < \exp\left\{\frac{\epsilon\xi^4}{u_2^4n} + \frac{C_1|\xi|^3}{\sqrt n} - \frac{\xi^2}2\right\}\frac1n\left(\frac{\epsilon\xi^4}{u_2^4} + \frac{C_1^3|\xi|^9}{\sqrt n}\right). 
$$
By furthermore choosing $\delta = \delta(\epsilon, K, M_K)$ sufficiently small, we get 
$$
  \big|\Delta_n(\vt, \xi)\big| 
  < \frac{C_2}n\exp\left\{-\frac{\xi^2}4\right\}\left(\epsilon\xi^4 + \frac{|\xi|^9}{\sqrt n}\right), 
$$
with some $C_2 = C_2(M_K)$ on the set $\{|\xi| < \delta u_2\sqrt n\}$. 
From the estimate above, we have some constant $C = C(M_K)$, such that for all $n \ge 1$, 
$$
  \int_{|\xi|<\delta u_2\sqrt n} \big|\Delta_n(\vt, \xi)\big|d\xi \le \frac Cn\left(\epsilon + \frac1{\sqrt n}\right). 
$$
On the remaining set $\{|\xi| \ge \delta u_2\sqrt n\}$, by (\romannum2) we have that 
$$
  \big|\Delta_n(\vt, \xi)\big| < \frac{M_K^n}{(1 + |\xi|)^n} + \exp\left\{-\frac{\xi^2}2\right\}\left[1 + P_n + \frac12P_n^2\right]. 
$$
Hence, we can choose $N = N(\delta, M_K)$, such that for all $n \ge N$, 
$$
  \int_{|\xi|\ge\delta u_2\sqrt n} \big|\Delta_n(\vt, \xi)\big|d\xi < \frac1{n^{3/2}}. 
$$
The proof is then completed. 
\end{proof}

\end{appendices}


\titleformat{\section}[hang]	
{\bfseries\large}{}{0em}{}[]
\titlespacing*{\section}{0em}{2em}{1.5em}

\section{References}	
\renewcommand{\section}[2]{}
\bibliography{[BIB]common.bib,[BIB]chain_of_oscillators.bib}

\vspace{1em}
\noindent{\large Lu \textsc{Xu}}

\vspace{0.5em}
\noindent Gran Sasso Science Institute\\
Viale Francesco Crispi n.7, L'Aquila, Italy\\
{\tt lu.xu@gssi.it}

\end{document}